\documentclass[reqno]{amsart}

\usepackage{a4wide}
\usepackage{color}
\usepackage{mathrsfs}
\usepackage{mathtools}
\usepackage{amsmath}
\usepackage{amssymb}
\usepackage{nicefrac}
\numberwithin{equation}{section}
\usepackage[colorlinks,citecolor=green,linkcolor=red]{hyperref}

\usepackage[latin1]{inputenc}

\newcommand{\B}{\mathbb{B}}
\renewcommand{\H}{{\rm H}}
\newcommand{\M}{{\rm M}}
\newcommand{\N}{\mathbb{N}}
\newcommand{\R}{\mathbb{R}}
\newcommand{\T}{{\rm T}}
\newcommand{\LIP}{{\rm LIP}}
\newcommand{\Lip}{{\rm Lip}}
\newcommand{\lip}{{\rm lip}}
\newcommand{\la}{\langle}
\newcommand{\ra}{\rangle}
\newcommand{\sfd}{{\sf d}}
\renewcommand{\d}{{\mathrm d}}
\newcommand{\eps}{\varepsilon}
\newcommand{\nchi}{{\raise.3ex\hbox{\(\chi\)}}}
\newcommand{\fr}{\penalty-20\null\hfill\(\blacksquare\)}
\newcommand{\X}{{\rm X}}

\newcommand{\mm}{\mathfrak m}

\newcommand{\n}{\boldsymbol{\sf n}}
\newcommand{\E}{{\rm E}}
\newcommand{\ppi}{\boldsymbol\pi}
\renewcommand{\b}{\boldsymbol b}

\newtheorem{theorem}{Theorem}[section]
\newtheorem{corollary}[theorem]{Corollary}
\newtheorem{lemma}[theorem]{Lemma}
\newtheorem{proposition}[theorem]{Proposition}
\newtheorem{definition}[theorem]{Definition}

\newtheorem{remark}[theorem]{Remark}

\linespread{1.15}
\setcounter{tocdepth}{2}

\title{Universal infinitesimal Hilbertianity of sub-Riemannian manifolds}
\author{Enrico Le Donne, Danka Lu\v{c}i\'{c}, and Enrico Pasqualetto}

\address{Dipartimento di Matematica\\
         Universit\`{a} di Pisa\\
         Largo Bruno Pontecorvo, 5\\
         56127 Pisa\\
         Italy\\
         \textit{and}\\
         University of Jyvaskyla\\
         Department of Mathematics and Statistics \\
         P.O. Box 35 (MaD) \\
         FI-40014 University of Jyvaskyla \\
         Finland}         
\email{ledonne@msri.org}

\address{University of Jyvaskyla\\
         Department of Mathematics and Statistics \\
         P.O. Box 35 (MaD) \\
         FI-40014 University of Jyvaskyla \\
         Finland}
\email{danka.d.lucic@jyu.fi}

\address{University of Jyvaskyla\\
         Department of Mathematics and Statistics \\
         P.O. Box 35 (MaD) \\
         FI-40014 University of Jyvaskyla \\
         Finland}
\email{enrico.e.pasqualetto@jyu.fi}

\begin{document}
\date{\today} 
\keywords{Infinitesimal Hilbertianity, Sobolev space,
sub-Riemannian manifold, sub-Finsler manifold}
\subjclass[2010]{53C23, 46E35, 53C17, 55R25}
\begin{abstract}
We prove that sub-Riemannian manifolds are infinitesimally
Hilbertian (i.e., the associated Sobolev space is Hilbert)
when equipped with an arbitrary Radon measure. The result follows
from an embedding of metric derivations into the space of
square-integrable sections of the horizontal bundle, which we
obtain on all weighted sub-Finsler manifolds. As an intermediate
tool, of independent interest, we show that any sub-Finsler distance
can be monotonically approximated from below by Finsler ones.
All the results are obtained in the general setting
of possibly rank-varying structures.
\end{abstract}
\maketitle
\tableofcontents
\section{Introduction}
\paragraph{\it General overview}\ \
In the last two decades, weakly differentiable functions over metric measure
spaces have been extensively studied and have played a fundamental role in
the development of abstract calculus in the nonsmooth setting (see, e.g.,
\cite{HKST15,Gigli12,Gigli14}).
The definition of Sobolev space we adopt in this paper is the one
introduced in \cite{DM14}, which is equivalent to the notions proposed in
\cite{Cheeger00,Shanmugalingam00,AmbrosioGigliSavare11}.
At this level of generality, however, Sobolev calculus might not be fully 
satisfactory from a functional-analytic viewpoint. For instance, not only
the Sobolev space can fail to be Hilbert (consider the Euclidean space endowed
with the \(L^\infty\)-norm and the Lebesgue measure), but it can be also
non-reflexive (as shown in \cite[Proposition 7.8]{ACM14}).
In view of this, the class of \emph{infinitesimally Hilbertian} metric measure
spaces (i.e., whose associated Sobolev space is Hilbert) is particularly relevant.
These spaces enjoy nice features, among which the strong density of
boundedly-supported Lipschitz functions in the Sobolev space (as proven
in \cite{AmbrosioGigliSavare11-3}); we refer to the introduction of
\cite{LP19} for an account of the several  benefits of working
within this class of spaces.

A strictly related concept is that of \emph{universally infinitesimally Hilbertian}
metric space, that is to say, a metric space that is infinitesimally
Hilbertian with respect to whichever Radon measure. The interest in this
property is mainly motivated by the study of metric structures that are
important from a geometric perspective, but do not carry any `canonical'
measure (such as sub-Riemannian manifolds that are not equiregular).
The purpose of this paper is to prove the following claim:
\[
\text{All sub-Riemannian manifolds are universally infinitesimally Hilbertian.}
\]
The goal will be achieved by building an isometric embedding of the
`analytic' space of \emph{derivations} over any weighted sub-Finsler manifold
(which provide us with a synthetic notion of vector field, linked to the Sobolev calculus)
into the `geometric' space of sections of the horizontal bundle.
The abstract differential structure of the space under consideration and the
behaviour of its (purely metric) tangent spaces are -- a priori -- unrelated,
thus the role of the above-mentioned embedding result is to bridge this gap, 
showing that Sobolev functions are suitable to capture the fiberwise Hilbertianity
of the horizontal bundle. As an intermediate tool, of independent interest,
we prove that a sub-Finsler distance can be monotonically approximated from
below by Finsler distances.
\bigskip
\paragraph{\it Outline of the paper}\ \
We consider a \emph{(generalised) sub-Finsler manifold} \((\M,\E,\sigma,\psi)\).
This means that \(\M\) is a smooth connected manifold, while \(\E\) is a
smooth vector bundle over \(\M\) equipped with a \emph{continuous metric}
\(\sigma\colon\E\to[0,+\infty)\) (as in Definition \ref{def:cont_metric}) and
\(\psi\colon\E\to\T\M\) is a bundle morphism; moreover, a H\"{o}rmander-like
condition is required to hold, cf.\ Definition \ref{def:sF_manifold}.
Whenever it holds that for every \(x\in\M\) the norm \(\sigma|_{\E_x}\)
on the fiber \(\E_x\) is induced by a scalar product that smoothly depends
on \(x\), we say that
\((\M,\E,\sigma,\psi)\) is a \emph{(generalised) sub-Riemannian manifold}.
This notion of sub-Riemannian manifold is the most general
one that we have in the literature (see, e.g., \cite{AgrBarBos19}).

The \emph{horizontal bundle} \(\H\M\) is obtained by `patching together'
the horizontal fibers \(\mathcal D_x\coloneqq\psi(\E_x)\), which form a
\emph{continuous distribution} on \(\M\) (in the sense of Theorem
\ref{thm:rank_vary_distr}). We then define a \emph{generalised metric}
\(\rho\colon\T\M\to[0,+\infty]\) over the tangent bundle (cf.\ Definition
\ref{def:gen_metric} for this term) as
\[
\rho(x,v)=\|v\|_x\coloneqq
\inf\big\{\sigma(u)\;\big|\;u\in\E_x,\,(x,v)=\psi(u)\big\},
\quad\text{ for every }(x,v)\in\T\M.
\]
Observe that the finiteness domain of \(\rho(x,\cdot)\) coincides with the horizontal
fiber \(\mathcal D_x\) for every \(x\in\M\). The space \(\M\)
can be made into a metric space by considering the
\emph{Carnot--Carath\'{e}odory distance}: given any two points \(x,y\in\M\),
we define \(\sfd_{\rm CC}(x,y)\) as the length of the shortest path among all
horizontal curves (i.e., tangent to \(\H\M\)) joining \(x\) and \(y\).
Here, the length of a horizontal curve is computed with respect to the generalised
metric \(\rho\). See Definition \ref{def:CC_distance} for the details.

Let us now fix a non-negative Radon measure \(\mu\)
on \((\M,\sfd_{\rm CC})\), say that \(\mu\) is finite (for simplicity).
We may consider two (completely different in nature) notions of vector field
over \((\M,\sfd_{\rm CC},\mu)\):
\begin{itemize}
\item The space \({\rm Der}^{2,2}(\M;\mu)\) of \(L^2\)-\emph{derivations} having
divergence in \(L^2\) (in the sense of \cite{DM14}). These are linear functionals
acting on Lipschitz functions and taking values into the space of Borel functions
over \(\M\), that satisfy a suitable Leibniz rule and a locality
property. The Sobolev space \(W^{1,2}(\M,\sfd_{\rm CC},\mu)\) is then obtained
in duality with \({\rm Der}^{2,2}(\M;\mu)\), as described in Definition
\ref{def:Sobolev}. The whole Section \ref{s:der} is devoted to the key
results about \(L^2\)-derivations.
\item The space \(L^2(\H\M;\mu)\) of \(2\)-integrable sections of the
horizontal bundle; see Definition \ref{def:sect_HM}. Whenever \(\M\) is a
sub-Riemannian manifold, the elements of \(L^2(\H\M;\mu)\) satisfy a
pointwise parallelogram rule (thanks to geometric considerations, see
Remark \ref{rmk:geom_pr}). Nevertheless, it is not clear -- a priori --
how to deduce from this information that the metric measure space
\((\M,\sfd_{\rm CC},\mu)\) is infinitesimally Hilbertian.
\end{itemize}

The main result of the paper aims at providing a relation between
\({\rm Der}^{2,2}(\M;\mu)\) and \(L^2(\H\M;\mu)\):
the former space is isometrically embeddable into the latter one.
The precise statement is:
\begin{theorem}[Embedding theorem]\label{thm:embedding_intro}
Let \((\M,\E,\sigma,\psi)\) be a sub-Finsler manifold with \(\sfd_{\rm CC}\) complete.
Let \(\mu\) be a finite, non-negative Borel measure on \((\M,\sfd_{\rm CC})\).
Then there exists a unique linear operator
\({\rm I}\colon{\rm Der}^{2,2}(\M;\mu)\to L^2(\H\M;\mu)\) such that
\[
\d_\H f(x)\big[{\rm I}(\b)(x)\big]=\b(f)(x)
\quad\text{ holds for }\mu\text{-a.e.\ }x\in\M,
\]
for every \(\b\in{\rm Der}^{2,2}(\M;\mu)\) and \(f\in C^1_c(\M)\cap\LIP(\M)\).
Moreover, the operator \(\rm I\) satisfies
\[
{\big\|{\rm I}(\b)(x)\big\|}_x=|\b|(x)\quad\text{ for }\mu\text{-a.e.\ }x\in\M,
\]
for every \(\b\in{\rm Der}^{2,2}(\M;\mu)\).
\end{theorem}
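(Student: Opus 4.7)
\bigskip

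\noindent\emph{Proof proposal.}
The plan is to construct \({\rm I}(\b)(x)\in\mathcal D_x\) pointwise for \(\mu\)-a.e.\ \(x\) via finite-dimensional duality, the key input being the pointwise identity
\[
\lip(f)(x)={\|\d_\H f(x)\|}_x^*,\qquad f\in C^1_c(\M)\cap\LIP(\M),\ x\in\M,
\]
where \(\|\cdot\|_x^*\) denotes the norm on \(\mathcal D_x^*\) dual to \(\|\cdot\|_x\). The inequality ``\(\geq\)'' follows by computing the derivative of \(f\) along horizontal curves through \(x\), while ``\(\leq\)'' follows by integrating \(\d_\H f\) along near-geodesic horizontal curves joining \(x\) to nearby points and exploiting the continuity of the fiberwise norm. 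In the non-equiregular, possibly rank-varying sub-Finsler setting, the monotone Finsler approximation of \(\sfd_{\rm CC}\) highlighted in the abstract is the natural vehicle for making this argument robust, by reducing the identity to its classical Finsler analogue.

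Given this identity, fix \(\b\in{\rm Der}^{2,2}(\M;\mu)\). The defining bound of the derivation norm reads \(|\b(f)(x)|\le|\b|(x)\,\lip(f)(x)=|\b|(x)\,{\|\d_\H f(x)\|}_x^*\) for \(\mu\)-a.e.\ \(x\) and each fixed \(f\in C^1_c(\M)\cap\LIP(\M)\). Pick a countable family \(\{f_n\}\subset C^1_c(\M)\cap\LIP(\M)\) such that \(\{\d_\H f_n(x)\}_{n\in\N}\) spans \(\mathcal D_x^*\) for every \(x\in\M\); such a family is obtained by patching smooth cut-offs of coordinate functions associated with a countable atlas. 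On the \(\mu\)-conegligible set on which the bound above holds for all \(n\), the assignment \(\d_\H f_n(x)\mapsto\b(f_n)(x)\) is consistent and linear (vanishing of a linear combination of \(\{\d_\H f_n(x)\}\) forces, via the bound, the vanishing of the same combination of \(\{\b(f_n)(x)\}\)), and it is bounded by \(|\b|(x)\) as a functional on \((\mathcal D_x^*,\|\cdot\|_x^*)\); finite-dimensional reflexivity then yields a unique vector \({\rm I}(\b)(x)\in\mathcal D_x\) with \({\|{\rm I}(\b)(x)\|}_x\le|\b|(x)\), and the defining identity \(\d_\H f(x)[{\rm I}(\b)(x)]=\b(f)(x)\) extends from \(\{f_n\}\) to every \(f\in C^1_c(\M)\cap\LIP(\M)\) by the spanning property. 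The reverse inequality \(|\b|(x)\le{\|{\rm I}(\b)(x)\|}_x\) comes from selecting by Hahn--Banach a unit covector \(\xi\in\mathcal D_x^*\) with \(\xi[{\rm I}(\b)(x)]={\|{\rm I}(\b)(x)\|}_x\), realising \(\xi\) as \(\d_\H f(x)\) for a suitable test function \(f\) with \(\lip(f)(x)=1\), and feeding the resulting equality into the variational characterization of \(|\b|\). Measurability of \(x\mapsto{\rm I}(\b)(x)\) in continuous local frames of \(\H\M\) supplied by Theorem \ref{thm:rank_vary_distr} follows from the countable measurable linear system determining its coefficients; \(L^2\)-integrability, linearity, and uniqueness of \(\rm I\) are then immediate.

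The main obstacle I anticipate is the rigorous implementation of the slope-equals-dual-norm identity in the present sub-Finsler setting, where \(\sfd_{\rm CC}\) is not smooth, the horizontal distribution may have varying rank, and standard arguments from Riemannian or equiregular sub-Riemannian geometry do not transfer verbatim. The monotone Finsler approximation of \(\sfd_{\rm CC}\) is exactly the tool designed to bridge this gap: it transfers the identity from Finsler manifolds, where it is classical, to the sub-Finsler setting via a controlled limit, bypassing equiregularity entirely. The secondary technicality of a measurable selection in a rank-varying horizontal bundle is then handled cleanly by the continuous distribution structure provided by Theorem \ref{thm:rank_vary_distr}.
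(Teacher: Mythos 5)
Your construction of \({\rm I}(\b)(x)\) by fiberwise duality is a genuinely different route from the paper's, which instead builds \({\rm I}(\b)(x)\) as a barycenter \(\frac{\d\nu}{\d\mu}(x)\int_{\mathcal D_x}v\,\d\mathfrak n_x(v)\) of curve velocities, obtained by disintegrating the measure \(\ppi\) from the metric superposition principle (Theorem \ref{thm:superposition_principle}). However, your argument has a gap that I believe is fatal as written, and it sits exactly where the paper concentrates its effort. The inequality \(|\b|(x)\le{\|{\rm I}(\b)(x)\|}_x\) cannot be obtained by your Hahn--Banach step: producing, for each \(x\), a test function \(f\) with \(\d_\H f(x)=\xi\) and \(\lip(f)(x)=1\) gives \(\b(f)(x)={\|{\rm I}(\b)(x)\|}_x\), but the variational characterization \(|\b|={\rm ess\,sup}\{\b(g):g\in\LIP_{bs},\ \Lip(g)\le 1\}\) involves the \emph{global} Lipschitz constant, and your \(f\) is only locally \(1\)-Lipschitz at the single point \(x\); even if it were globally \(1\)-Lipschitz, the conclusion \(\b(f)(x)\le|\b|(x)\) would reprove the inequality \({\|{\rm I}(\b)(x)\|}_x\le|\b|(x)\) you already have, not its converse. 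What is actually needed is to dominate \(\b(g)\) for \emph{every} globally \(1\)-Lipschitz \(g\) by \({\|{\rm I}(\b)\|}\), and for non-smooth \(g\) this requires knowing that the essential supremum defining \(|\b|\) can be computed over a countable family of \emph{smooth, globally \(1\)-Lipschitz} functions. This is precisely Theorem \ref{thm:ptwse_norm_b_bis}, whose proof combines Proposition \ref{prop:ptwse_norm_b} (truncated distance functions), the monotone Finsler approximation \(\sfd_{F_n}\nearrow\sfd_{\rm CC}\) of Theorem \ref{thm:approx_sR_with_Riem}, and the smoothing of Lipschitz functions on Finsler manifolds (Lemma \ref{lem:approx_Lip_with_Cinfty}). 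You invoke the Finsler approximation, but for a different purpose; without the representation formula your isometry claim does not close.

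A second, lesser gap: your existence step relies on the identity \(\lip(f)(x)={\|\d_\H f(x)\|}^*_x\) (in fact on its \(\lip_a\) version, since weak locality of derivations is stated with the asymptotic Lipschitz constant). The paper only proves the easy inequality \({\|\d_\H f(x)\|}^*_x\le\lip(f)(x)\) (eq.\ \eqref{eq:estimate_nabla_H}) and its construction never needs the converse. The converse \(\lip_a(f)(x)\le{\|\d_\H f(x)\|}^*_x\) is plausibly provable -- integrating \(\d_\H f\) along near-geodesics and using the upper semicontinuity of \(z\mapsto{\|\d_\H f(z)\|}^*_z\), which follows from lower semicontinuity of \(\rho\) together with the local compactness of its sub-level sets via \(\sigma\) -- but in the rank-varying setting this is a nontrivial lemma that you would have to supply, and your consistency argument (``vanishing of the combination of differentials forces vanishing of the combination of \(\b(f_n)(x)\)'') depends on it in an essential way. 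In short: the skeleton of a duality-based proof is reasonable, but both load-bearing analytic inputs are missing, and the one controlling \(|\b|\) from above by \({\|{\rm I}(\b)\|}\) is not repairable by the argument you propose.
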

As a consequence, sub-Riemannian
manifolds are universally infinitesimally Hilbertian:
\begin{theorem}[Infinitesimal Hilbertianity of sub-Riemannian manifolds]
\label{thm:uiH_intro}
Let \((\M,\E,\sigma,\psi)\) be a sub-Riemannian manifold with \(\sfd_{\rm CC}\)
complete. Let \(\mu\) be a non-negative Radon measure on \((\M,\sfd_{\rm CC})\).
Then the metric measure space \((\M,\sfd_{\rm CC},\mu)\) is infinitesimally
Hilbertian.
\end{theorem}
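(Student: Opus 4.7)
My plan is to deduce Theorem \ref{thm:uiH_intro} from Theorem \ref{thm:embedding_intro} by noting that, in the sub-Riemannian case, the target module \(L^2(\H\M;\mu)\) is fiberwise Hilbertian; the pointwise isometric embedding \({\rm I}\) then forces a pointwise parallelogram identity on the space of derivations, which in turn yields the Hilbertianity of \(W^{1,2}(\M,\sfd_{\rm CC},\mu)\). The initial reduction to the case of a finite Borel measure is routine: as \((\M,\sfd_{\rm CC})\) is complete and separable, any Radon measure \(\mu\) is \(\sigma\)-finite, so there exists a strictly positive Borel \(\phi\in L^1(\mu)\), and \(\tilde\mu:=\phi\cdot\mu\) is finite and mutually absolutely continuous with \(\mu\). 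Infinitesimal Hilbertianity is characterized by a pointwise parallelogram identity for minimal weak upper gradients (cf.\ \cite{DM14}), which is insensitive to such equivalent measure changes, so it suffices to treat \(\tilde\mu\).

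With \(\mu\) finite, Theorem \ref{thm:embedding_intro} supplies \({\rm I}\colon{\rm Der}^{2,2}(\M;\mu)\to L^2(\H\M;\mu)\) with \(\|{\rm I}(\b)(x)\|_x=|\b|(x)\) for \(\mu\)-a.e.\ \(x\). In the sub-Riemannian setting \(\sigma|_{\E_x}\) is induced by a scalar product \(\la\cdot,\cdot\ra_x\), and the linear surjection \(\psi|_{\E_x}\colon\E_x\to\mathcal D_x\) passes \(\la\cdot,\cdot\ra_x\) to a Hilbertian quotient norm on \(\mathcal D_x\) which agrees with \(\|\cdot\|_x\). Combining this fiberwise Hilbertianity with the linearity of \({\rm I}\) and the pointwise isometry, for all \(\b_1,\b_2\in{\rm Der}^{2,2}(\M;\mu)\),
\[
|\b_1+\b_2|^2(x)+|\b_1-\b_2|^2(x)=2|\b_1|^2(x)+2|\b_2|^2(x)\qquad\text{for }\mu\text{-a.e.\ }x\in\M.
\]
The standard duality between \(L^2\)-derivations and weak upper gradients in \cite{DM14} then transfers this pointwise parallelogram rule for \(|\b|\) into the analogous one for the minimal weak upper gradient \(|\nabla f|_w\), so the Cheeger energy is quadratic and \(W^{1,2}(\M,\sfd_{\rm CC},\mu)\) is Hilbert.

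\textbf{Main obstacle.} Essentially all the analytic difficulty is packaged into Theorem \ref{thm:embedding_intro}, and the present deduction is a formal transfer argument. The step meriting most care is the last one, where the pointwise parallelogram identity on the `tangent' side (derivations) must be converted into the identity on the `cotangent' side (weak upper gradients) that characterizes infinitesimal Hilbertianity; this hinges on identifying the pointwise derivation norm with the fiberwise predual of the minimal weak upper gradient, which is standard in the derivation calculus of \cite{DM14} but deserves a careful verification in the present sub-Finsler context. A secondary concern is the reduction to finite \(\mu\), which requires checking that the pointwise parallelogram characterization of infinitesimal Hilbertianity is indeed invariant under a.e.\ equivalent measure changes -- mild, but worth making explicit in the general Radon setting.
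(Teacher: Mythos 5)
Your overall strategy coincides with the paper's: for a finite measure, combine the embedding theorem with the fiberwise Hilbertianity of \((\mathcal D_x,\|\cdot\|_x)\) (the quotient of a Hilbert norm under \(\psi|_{\E_x}\) is Hilbert, cf.\ Remark \ref{rmk:geom_pr}) to obtain the \(\mu\)-a.e.\ parallelogram identity \(|\b_1+\b_2|^2+|\b_1-\b_2|^2=2|\b_1|^2+2|\b_2|^2\) for derivations, and then dualize. That core is correct. Two steps, however, are not sound as written.

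First, the reduction to a finite measure. You replace \(\mu\) by \(\tilde\mu=\phi\,\mu\) with \(\phi\in L^1(\mu)\) merely strictly positive, and assert that infinitesimal Hilbertianity is insensitive to this change because it is characterized by a pointwise parallelogram identity for minimal weak upper gradients. This is a genuine gap: the space of derivations, the Sobolev space, and the minimal weak gradient all depend on the measure, and there is no general theorem stating that infinitesimal Hilbertianity is preserved under passage to an arbitrary mutually absolutely continuous measure (where \(\phi\) degenerates, \(|Df|_{\phi\mu}\) need not agree with \(|Df|_{\mu}\)). The paper avoids this by setting \(\mu_n\coloneqq\mu|_{\bar B_n(\bar x)}\) and invoking the locality results of \cite{Gigli12} and \cite{DiMarino_thesis}, which give \(|Df|_{\mu_n}=|Df|_{\mu}\) \(\mu_n\)-a.e.; Hilbertianity of each \(W^{1,2}(\M,\sfd_{\rm CC},\mu_n)\) then yields the identity for \(\mu\) in the limit. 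Your reduction could be repaired by choosing \(\phi\) locally bounded away from \(0\) and \(\infty\) (e.g.\ constant on annuli), but the invariance claim for a general positive \(L^1\) density is unjustified.

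Second, the dualization. You claim that the pointwise parallelogram rule for \(|\b|\) transfers, by "standard duality", to a pointwise parallelogram rule for the minimal weak upper gradients. That pointwise transfer is not a one-line consequence of the duality in \cite{DM14}, and it is also more than is needed. The paper's Proposition \ref{prop:suff_cond_iH} proceeds differently: integrating the derivation identity shows that the norm \(\|\cdot\|_2\) on \({\rm Der}^{2,2}(\M;\mu)\) satisfies the parallelogram law, hence its dual Banach space \(\mathbb B\) is Hilbert, and the isometry \(f\mapsto\mathscr L_f\) of Remark \ref{rmk:mathscr_L_f}, with \(\|\mathscr L_f\|_{\mathbb B}=\||Df|\|_{L^2(\mu)}\), converts this into the parallelogram identity for the \(W^{1,2}\)-norm. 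You correctly flagged this as the delicate point, but the argument you sketch (pointwise identity on the cotangent side) is not the one that closes it; the integrated dual-space argument is.
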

The proof of the embedding result (Theorem \ref{thm:embedding_intro})
builds upon the following key ingredients:
\begin{itemize}
\item[\(\rm a)\)] The Carnot--Carath\'{e}odory distance \(\sfd_{\rm CC}\) can be written
as pointwise limit of an increasing sequence of Finsler distances; cf.\ Theorem
\ref{thm:approx_sR_with_Riem}. This property follows from the results we develop
in Section \ref{s:approx_metr}, where we show that the sub-Finsler metric \(\rho\)
(or, more generally, any generalised metric as in Definition \ref{def:gen_metric})
can be approximated from below by Finsler ones. This technical statement can be
achieved by exploiting the lower semicontinuity of \(\rho\),
as done in Lemma \ref{lem:main_lemma}.
\item[\(\rm b)\)] The pointwise norm of a given derivation can be recovered by
just considering its evaluation at smooth \(1\)-Lipschitz functions. More precisely,
we can find a sequence \((f_n)_n\subseteq C^1_c(\M)\) of \(1\)-Lipschitz functions (with respect to \(\sfd_{\rm CC}\)) such that the identity
\(|\b|=\sup_n\b(f_n)\) holds \(\mu\)-a.e.\ for every \(\b\in{\rm Der}^{2,2}(\M;\mu)\).
This representation formula is obtained by combining item a) above with an
approximation result for Finsler manifolds proven in \cite{LP19}.
\item[\(\rm c)\)] Any derivation \(\b\in{\rm Der}^{2,2}(\M;\mu)\) can be represented
by a suitable measure \(\ppi\) on the space of continuous curves in \(\M\), as
granted by the metric version \cite{PaoSte12} of Smirnov's superposition principle
for normal \(1\)-currents; see Theorem \ref{thm:superposition_principle}. The presence
of such a measure \(\ppi\) is an essential tool in the construction of the
embedding map \({\rm I}\colon{\rm Der}^{2,2}(\M;\mu)\to L^2(\H\M;\mu)\),
which preserves the pointwise norm of all vector fields as a consequence of item b).
\end{itemize}
\medskip
\paragraph{\it Comparison with previous works}\ \
The results of the present paper enrich the list of metric
spaces that are known to be universally infnitesimally Hilbertian,
which previously consisted of:
\begin{itemize}
\item[\(\rm i)\)] Euclidean spaces \cite{GP16-2},
\item[\(\rm ii)\)] Riemannian manifolds \cite{LP19},
\item[\(\rm iii)\)] Carnot groups \cite{LP19},
\item[\(\rm iv)\)] locally \({\rm CAT}(\kappa)\)-spaces \cite{DMGPS18}.
\end{itemize}
Let us now briefly comment on the main differences and analogies
between the technique we exploit here and the previous approaches.
To the best of our knowledge, the strategy proposed in \cite{GP16-2,LP19}
does not carry over to the framework of sub-Riemannian manifolds.
In the classes of spaces i), ii), and iii), a fact which plays a
fundamental role is that any Lipschitz function (with respect to
the relevant distance) can be approximated by smooth ones having
the same Lipschitz constant; it seems that this property, achieved
by a convolution argument, cannot be generalised to sub-Riemannian
manifolds, the problem being to keep the Lipschitz constant under control.

However, a different approach has been developed in
\cite{DMGPS18} in order to overcome the lack of smoothness of
the spaces in iv). The proof in the sub-Riemannian case is
inspired by the ideas introduced in \cite{DMGPS18}: indeed,
the universal infinitesimal Hilbertianity of \(\rm CAT\) spaces
stems -- similarly to what described above -- from an embedding
result, which in turn relies upon Smirnov's superposition principle
and a representation formula for the pointwise norm of derivations.
While the former is available on any metric measure space, the latter
requires an ad hoc argument for the sub-Riemannian setting.
This makes a significant difference with \cite{DMGPS18}: on \(\rm CAT\)
spaces, distance functions from given points are \(1\)-Lipschitz and
everywhere have some form of differentiability, thus they are suitable candidates for
the representation formula; on sub-Riemannian manifolds, on the contrary,
this is no longer true, whence we need to find an alternative way to show
that there is plenty of smooth \(1\)-Lipschitz functions that are
\(\mu\)-a.e.\ differentiable (where \(\mu\) is an arbitrary measure).
Most of the present paper is actually dedicated to addressing this last point.
Once the representation formula is at disposal, the proof of the embedding result
closely follows along the lines of \cite[Theorem 6.2]{DMGPS18}.

\medskip
\noindent{\bf Acknowledgements.}
E.L.D.\ was partially supported by the Academy of Finland (grant 288501
`\emph{Geometry of subRiemannian groups}' and by grant 322898
`\emph{Sub-Riemannian Geometry via Metric-geometry and Lie-group Theory}')
and by the European Research Council
(ERC Starting Grant 713998 GeoMeG `\emph{Geometry of Metric Groups}').
D.L.\ and E.P.\ were partially supported by the Academy of Finland,
projects 274372, 307333, 312488, and 314789.

The authors would like to thank Tapio Rajala for the fruitful discussions about
the results of Section \ref{s:approx_metr}.
\section{Derivations and Sobolev calculus on metric measure spaces}\label{s:der}
We recall here the notions of derivation and Sobolev space that have been
proposed by S.\ Di Marino in \cite{DM14}.
For our purposes, a \emph{metric measure space} is a triple \((\X,\sfd,\mm)\),
where \((\X,\sfd)\) is a complete and separable metric space, while \(\mm\geq 0\)
is a locally finite Borel measure on \((\X,\sfd)\).
\medskip

We call \(\LIP(\X)\) or \(\LIP^\sfd(\X)\) the space of real-valued
Lipschitz functions on \((\X,\sfd)\), while \(\LIP_{bs}(\X)\)
or \(\LIP^\sfd_{bs}(\X)\) stand for the set of elements of \(\LIP(\X)\)
with bounded support. The (global) Lipschitz constant of \(f\in\LIP(\X)\)
is denoted by \(\Lip(f)\) or \(\Lip^\sfd(f)\), while the functions
\(\lip(f)\colon\X\to[0,+\infty)\) and \(\lip_a(f)\colon\X\to[0,+\infty)\)
are defined as
\[
\lip(f)(x)\coloneqq\varlimsup_{y\to x}\frac{\big|f(y)-f(x)\big|}{\sfd(y,x)},
\quad\lip_a(f)(x)\coloneqq\inf_{r>0}\Lip\big(f|_{B_r(x)}\big)
\]
whenever \(x\in\X\) is an accumulation point, and \(\lip(f)(x)=\lip_a(f)(x)\coloneqq 0\)
elsewhere. We say that \(\lip(f)\) and \(\lip_a(f)\) are the
\emph{local Lipschitz constant} and the \emph{asymptotic Lipschitz constant}
of the function \(f\), respectively.
The vector space of all (equivalence classes up to \(\mm\)-a.e.\ equality of)
real-valued Borel functions on \(\X\) is denoted by \(L^0(\mm)\).
\bigskip

A \emph{derivation} on \((\X,\sfd,\mm)\) is a linear map
\(\b\colon\LIP_{bs}(\X)\to L^0(\mm)\) with these two properties:
\begin{itemize}
\item[\(\rm a)\)] \textsc{Leibniz rule.} The identity \(\b(fg)=f\,\b(g)+g\,\b(f)\)
holds for every \(f,g\in\LIP_{bs}(\X)\).
\item[\(\rm b)\)] \textsc{Weak locality.} There exists a function \(G\in L^0(\mm)\)
such that \(\big|\b(f)\big|\leq G\,\lip_a(f)\) is satisfied in the
\(\mm\)-a.e.\ sense for every \(f\in\LIP_{bs}(\X)\).
\end{itemize}
The \emph{pointwise norm}
\(|\b|\coloneqq{\rm ess\,sup}\big\{\b(f)\,\big|
\,f\in\LIP_{bs}(\X),\,\Lip(f)\leq 1\big\}\)
is the minimal function (in the \(\mm\)-a.e.\ sense) that can be chosen
as \(G\) in item b) above.
\begin{definition}[The space \({\rm Der}^{2,2}(\X;\mm)\)]
Let \((\X,\sfd,\mm)\) be a metric measure space. Then
we denote by \({\rm Der}^{2,2}(\X;\mm)\) the space of all derivations
\(\b\) on \((\X,\sfd,\mm)\) such that \(|\b|\in L^2(\mm)\) and whose
distributional divergence can be represented as a function in \(L^2(\mm)\),
i.e., there exists a (uniquely determined) function \({\rm div}(\b)\in L^2(\mm)\) such that
\[\int\b(f)\,\d\mm=-\int f\,{\rm div}(\b)\,\d\mm\quad\text{ for every }f\in\LIP_{bs}(\X).\]
\end{definition}

The space \({\rm Der}^{2,2}(\X;\mm)\) is a module over the commutative
ring \(\LIP_{bs}(\X)\) and is a Banach space when endowed with the norm
\({\rm Der}^{2,2}(\X;\mm)\ni\b\mapsto\|\b\|_{2,2}\coloneqq
\big(\int|\b|^2\,\d\mm+\int{\rm div}(\b)^2\,\d\mm\big)^{\nicefrac{1}{2}}\).
\begin{lemma}\label{lem:der_continuity}
Let \((\X,\sfd,\mm)\) be a metric measure space and \(\b\in{\rm Der}^{2,2}(\X;\mm)\).
Let \((f_n)_n\subseteq\LIP_{bs}(\X)\) be a sequence with \(\sup_n\Lip(f_n)<+\infty\)
that pointwise converges to some limit \(f\in\LIP_{bs}(\X)\). Then
\[\int\varphi\,\b(f_n)\,\d\mm\longrightarrow\int\varphi\,\b(f)\,\d\mm
\quad\text{ for every }\varphi\in\LIP_{bs}(\X).\]
\end{lemma}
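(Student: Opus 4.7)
My plan is to reduce everything to dominated convergence by moving the derivation off the varying argument $f_n$ via the Leibniz rule and the divergence. Concretely, for each $n$ the product $\varphi f_n$ lies in $\LIP_{bs}(\X)$, so the Leibniz rule yields
\[
\varphi\,\b(f_n)=\b(\varphi f_n)-f_n\,\b(\varphi),
\]
and integrating against $\mm$ and using the defining property of $\mathrm{div}(\b)$ gives
\[
\int\varphi\,\b(f_n)\,\d\mm=-\int\varphi f_n\,\mathrm{div}(\b)\,\d\mm-\int f_n\,\b(\varphi)\,\d\mm.
\]
The same identity holds with $f$ in place of $f_n$, so it suffices to pass to the limit in the two integrals on the right-hand side.

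For the limiting step I would first establish a uniform $L^\infty$-bound for $(f_n)$ on ${\rm supp}(\varphi)$: fixing any point $x_0\in\X$, pointwise convergence yields $\sup_n|f_n(x_0)|<+\infty$, and for every $x\in{\rm supp}(\varphi)$,
\[
|f_n(x)|\leq|f_n(x_0)|+\Bigl(\sup_n\Lip(f_n)\Bigr)\,\sfd(x,x_0),
\]
whose right-hand side is bounded uniformly in $n$ because ${\rm supp}(\varphi)$ is bounded. Since $\varphi\in\LIP_{bs}(\X)$ is itself bounded, the integrand $\varphi f_n\,\mathrm{div}(\b)$ is dominated $\mm$-a.e.\ by $C\,\nchi_{{\rm supp}(\varphi)}|\mathrm{div}(\b)|$, and this dominant lies in $L^1(\mm)$ by Cauchy--Schwarz combined with the local finiteness of $\mm$ and $\mathrm{div}(\b)\in L^2(\mm)$. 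Hence dominated convergence applied to the pointwise convergence $f_n\to f$ yields
\[
\int\varphi f_n\,\mathrm{div}(\b)\,\d\mm\longrightarrow\int\varphi f\,\mathrm{div}(\b)\,\d\mm.
\]

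For the second integral the weak locality of $\b$ gives $|\b(\varphi)|\leq|\b|\,\lip_a(\varphi)$ $\mm$-a.e., and $\lip_a(\varphi)$ vanishes outside ${\rm supp}(\varphi)$ and is bounded by $\Lip(\varphi)$. Therefore $|f_n\,\b(\varphi)|\leq C\,\Lip(\varphi)\,\nchi_{{\rm supp}(\varphi)}\,|\b|$ $\mm$-a.e., with the right-hand side in $L^1(\mm)$ since $|\b|\in L^2(\mm)$ and ${\rm supp}(\varphi)$ has finite $\mm$-measure. Dominated convergence again gives $\int f_n\,\b(\varphi)\,\d\mm\to\int f\,\b(\varphi)\,\d\mm$. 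Combining the two limits and rewriting via Leibniz and the divergence formula in reverse yields the claim.

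I do not expect a real obstacle here; the only mild subtlety is that the supports of $f_n$ need not be uniformly bounded, so one must not attempt to bound $\|f_n\|_\infty$ globally but rather exploit that the test function $\varphi$ localises everything to the bounded set ${\rm supp}(\varphi)$, on which pointwise convergence plus a uniform Lipschitz bound upgrades automatically to a uniform sup-bound by the argument above.
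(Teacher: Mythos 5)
Your argument is correct and is essentially the one behind the paper's citation to \cite[Lemma 5.4, item (1)]{DMGPS18}: move the derivation off the varying function via the Leibniz rule and the divergence identity, then conclude by dominated convergence using the uniform Lipschitz bound to control $f_n$ on the bounded support of $\varphi$. Nothing further is needed.
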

\begin{proof}
See item (1) of \cite[Lemma 5.4]{DMGPS18}.
\end{proof}
It will be convenient to work with the following representation formula
for the pointwise norm of the elements of \({\rm Der}^{2,2}(\X;\mm)\).
\begin{proposition}\label{prop:ptwse_norm_b}
Let \((\X,\sfd,\mm)\) be a metric measure space. Let \(\b\in{\rm Der}^{2,2}(\X;\mm)\)
be given. Fix a countable dense set \((x_k)_k\subseteq\X\).
For any \(j,k\in\N\), let \(\eta_{jk}\colon\X\to[0,1-1/j]\) be a
boundedly-supported Lipschitz function such that \(\eta_{jk}=1-1/j\)
on \(B_j(x_k)\) and \(\Lip(\eta_{jk})\leq 1/j^2\).
Then it holds that
\begin{equation}\label{eq:ptwse_norm_b}
|\b|=\underset{j,k\in\N}{\rm ess\,sup\,}
\b\big((\sfd(\cdot,x_k)\wedge j)\,\eta_{jk}\big)
\quad\text{ in the }\mm\text{-a.e.\ sense.}
\end{equation}
\end{proposition}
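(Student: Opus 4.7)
For the inequality $\underset{j,k\in\N}{{\rm ess\,sup}}\,\b(g_{jk})\leq|\b|$, where $g_{jk}\coloneqq(\sfd(\cdot,x_k)\wedge j)\eta_{jk}$, I would check that each $g_{jk}$ is admissible in the essential supremum defining $|\b|$. Bounded support is inherited from $\eta_{jk}$; the product estimate for Lipschitz constants applied to $\sfd(\cdot,x_k)\wedge j$ (which is $1$-Lipschitz with sup-norm at most $j$) and to $\eta_{jk}$ (which is $1/j^2$-Lipschitz with sup-norm at most $1-1/j$) yields
\[
\Lip(g_{jk})\leq 1\cdot(1-1/j)+j\cdot(1/j^2)=1.
\]
Thus $g_{jk}\in\LIP_{bs}(\X)$ with $\Lip(g_{jk})\leq 1$, and so $\b(g_{jk})\leq|\b|$ $\mm$-a.e., which gives the upper bound on the essential supremum.

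For the reverse inequality, set $h\coloneqq\underset{j,k\in\N}{{\rm ess\,sup}}\,\b(g_{jk})\in L^0(\mm)$; it is enough to prove that $\b(f)\leq h$ $\mm$-a.e.\ for an arbitrary $f\in\LIP_{bs}(\X)$ with $\Lip(f)\leq 1$. Given such $f$ with $\operatorname{supp}(f)\subseteq B_R(x_0)$, the plan is to reconstruct $f$ from the $g_{jk}$'s through the McShane-type representation
\[
f(x)=\inf_{k\in\N}\bigl(f(x_k)+\sfd(x,x_k)\bigr),
\]
replacing each $\sfd(\cdot,x_k)$ by $g_{jk}/(1-1/j)$ (the two coincide on $B_j(x_k)$, which contains $B_R(x_0)$ whenever $x_k\in B_{2R}(x_0)$ and $j$ is sufficiently large) and passing to finite minima of the form $\min_{i\leq N}\bigl(f(x_{k_i})+g_{j,k_i}/(1-1/j)\bigr)$. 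The Leibniz rule forces $\b$ to annihilate locally constant functions, and the strong locality of $\b$ provides the minimum rule $\b(\min_i u_i)\leq \max_i\b(u_i)$ $\mm$-a.e.; combined, these imply that every such finite minimum has derivation bounded $\mm$-a.e.\ by $h/(1-1/j)$. Lemma \ref{lem:der_continuity}, applied against nonnegative test functions $\varphi\in\LIP_{bs}(\X)$, then transfers the estimate to the pointwise limit $f$ as $N\to\infty$ and $j\to\infty$.

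The main technical obstacle is preserving bounded support along the approximation: the functions $f(x_{k_i})+g_{j,k_i}/(1-1/j)$ equal the constant $f(x_{k_i})$ outside $\operatorname{supp}\eta_{j,k_i}$ and hence are not in $\LIP_{bs}(\X)$, a defect inherited by their finite minima. This can be circumvented by multiplying with a soft cutoff $\psi$ equal to $1$ on a large ball containing $\operatorname{supp}(f)\cup\bigcup_i\operatorname{supp}\eta_{j,k_i}$: on the interior of $\{\psi=1\}$ the product agrees with the original minimum, and since $\lip_a(\psi)=0$ there one has $\b(c\psi)=0$, so the bound by $h/(1-1/j)$ persists where the approximation is relevant, while outside the cutoff the approximants tend to $f=0$. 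Letting $N\to\infty$, $j\to\infty$, and enlarging $\psi$ gives $\b(f)\leq h$ $\mm$-a.e., completing the proof.
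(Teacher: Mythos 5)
The paper offers no proof of this proposition beyond a citation of \cite[Proposition 5.5]{DMGPS18}, and your argument is essentially a reconstruction of that proof. The first inequality is complete and correct: the product estimate \(\Lip\big((\sfd(\cdot,x_k)\wedge j)\,\eta_{jk}\big)\leq 1\cdot(1-1/j)+j\cdot j^{-2}=1\) shows each \(g_{jk}\) is admissible in the essential supremum defining \(|\b|\). For the converse, the McShane representation \(f=\inf_k\big(f(x_k)+\sfd(\cdot,x_k)\big)\), localised through the cut-offs, passed through finite minima and then through Lemma \ref{lem:der_continuity}, is the right route.

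The one genuine gap is the ``minimum rule'' \(\b(\min_i u_i)\leq\max_i\b(u_i)\), which you attribute to ``the strong locality of \(\b\)''. The definition of derivation in this paper postulates only the Leibniz rule and \emph{weak} locality \(|\b(u)|\leq G\,\lip_a(u)\); the minimum rule requires the \emph{strong} locality \(\b(u)=\b(v)\) \(\mm\)-a.e.\ on \(\{u=v\}\), which is not a formal consequence of weak locality (note that \(\lip_a(u-v)\) need not vanish on \(\{u=v\}\)) but a theorem about derivations, proved in \cite{DM14} via the chain rule. It must be quoted or reproved; it is the key technical input. (By contrast, your use of \(\b(c\,\psi)=0\) on the interior of \(\{\psi=1\}\) does follow directly from weak locality, since \(\lip_a(\psi)\) vanishes there.) The same strong locality is needed once more at the end: for fixed \(j\) your approximants converge, as \(N\to\infty\), to \(u_{j,\infty}\psi\), which coincides with \(f\) only on \(B_R(x_0)\cap\{\psi=1\}\), so Lemma \ref{lem:der_continuity} bounds \(\b(u_{j,\infty}\psi)\) and locality is what transfers the bound to \(\b(f)\) on the coincidence set. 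Finally, your treatment of the region outside \(B_R(x_0)\) is the weakest sentence of the sketch: rather than arguing that ``the approximants tend to \(f=0\)'' there (which by itself does not give \(\b(f)\leq h\) without separately checking \(h\geq 0\) a.e.), it is cleaner to observe that the McShane identity holds at every point of \(\X\), so that running the argument for all \(R\) (with \(j\geq 3R\) and \(x_k\in B_{2R}(x_0)\), so that \(g_{jk}/(1-1/j)=\sfd(\cdot,x_k)\) on \(B_R(x_0)\)) and letting \(R\to\infty\) exhausts \(\X\). With these points made explicit the proof closes.
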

\begin{proof}
It follows from \cite[Proposition 5.5]{DMGPS18}.
\end{proof}

By duality with \({\rm Der}^{2,2}(\X;\mm)\), it is possible to introduce a
notion of Sobolev space \(W^{1,2}(\X,\sfd,\mm)\).
\begin{definition}[Sobolev space]\label{def:Sobolev}
Let \((\X,\sfd,\mm)\) be a metric measure space. Then we say that a function
\(f\in L^2(\mm)\) belongs to the \emph{Sobolev space} \(W^{1,2}(\X,\sfd,\mm)\)
provided there exists a continuous morphism
\(L_f\colon{\rm Der}^{2,2}(\X;\mm)\to L^1(\mm)\) of \(\LIP_{bs}(\X)\)-modules such that
\[\int L_f(\b)\,\d\mm=-\int f\,{\rm div}(\b)\,\d\mm
\quad\text{ for every }\b\in{\rm Der}^{2,2}(\X;\mm).\]
\end{definition}

The map \(L_f\) is uniquely determined. Furthermore, there exists a
function \(G\in L^2(\mm)\) such that
\[\big|L_f(\b)\big|\leq G\,|\b|\;\;\;\mm\text{-a.e.}
\quad\text{ for every }\b\in{\rm Der}^{2,2}(\X;\mm).\]
The minimal such function \(G\) (in the \(\mm\)-a.e.\ sense) is called
\emph{\(2\)-weak gradient} of \(f\) and is denoted by \(|Df|\) or \(|Df|_{\mm}\).
Then \(W^{1,2}(\X,\sfd,\mm)\) is a Banach space when equipped with the norm
\[{\|f\|}_{W^{1,2}(\X,\sfd,\mm)}\coloneqq
\left(\int|f|^2\,\d\mm+\int|Df|^2\,\d\mm\right)^{\nicefrac{1}{2}}
\quad\text{ for every }f\in W^{1,2}(\X,\sfd,\mm).\]
\begin{remark}\label{rmk:mathscr_L_f}{\rm
Let \((\X,\sfd,\mm)\) be a metric measure space.
Consider the (not necessarily complete) norm
\({\rm Der}^{2,2}(\X;\mm)\ni\b\mapsto\|\b\|_2
\coloneqq\big(\int|\b|^2\,\d\mm\big)^{\nicefrac{1}{2}}\).
Call \(\B\) the dual space of \(\big({\rm Der}^{2,2}(\X;\mm),\|\cdot\|_2\big)\).
Given any function \(f\in W^{1,2}(\X,\sfd,\mm)\), we define the element
\(\mathscr L_f\in\B\) as
\begin{equation}\label{eq:def_mathscr_L_f}
\mathscr L_f(\b)\coloneqq\int L_f(\b)\,\d\mm
\quad\text{ for every }\b\in{\rm Der}^{2,2}(\X;\mm).
\end{equation}
Then it holds that the map \(W^{1,2}(\X,\sfd,\mm)\ni f\mapsto\mathscr L_f\in\B\)
is linear and \({\|\mathscr L_f\|}_{\mathbb B}={\big\||Df|\big\|}_{L^2(\mm)}\)
is satisfied for every \(f\in W^{1,2}(\X,\sfd,\mm)\),
as proven in \cite[Proposition 5.10]{DMGPS18}.
\fr}\end{remark}
The following definition -- which has been introduced in \cite{Gigli12}
-- plays a key role in this paper.
\begin{definition}[Infinitesimal Hilbertianity]
We say that a metric measure space \((\X,\sfd,\mm)\) is
\emph{infinitesimally Hilbertian} provided \(W^{1,2}(\X,\sfd,\mm)\)
is a Hilbert space.
\end{definition}
The following result provides a sufficient condition for the infinitesimal
Hilbertianity to hold.
\begin{proposition}\label{prop:suff_cond_iH}
Let \((\X,\sfd,\mm)\) be a metric measure space. Suppose that
\begin{equation}\label{eq:pr_der}
|\b+\b'|^2+|\b-\b'|^2=2\,|\b|^2+2\,|\b'|^2\;\;\;\mm\text{-a.e.}
\quad\text{ for every }\b,\b'\in{\rm Der}^{2,2}(\X;\mm).
\end{equation}
Then \((\X,\sfd,\mm)\) is infinitesimally Hilbertian.
\end{proposition}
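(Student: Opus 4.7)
The plan is to verify that the norm on $W^{1,2}(\X,\sfd,\mm)$ satisfies the parallelogram identity and thereby conclude that $W^{1,2}(\X,\sfd,\mm)$ is a Hilbert space. Since $L^2(\mm)$ is already Hilbert, it suffices to show that the seminorm $f\mapsto\bigl\||Df|\bigr\|_{L^2(\mm)}$ satisfies the parallelogram identity: for all $f,g\in W^{1,2}(\X,\sfd,\mm)$,
\[
\bigl\||D(f+g)|\bigr\|^2_{L^2(\mm)}+\bigl\||D(f-g)|\bigr\|^2_{L^2(\mm)}=2\bigl\||Df|\bigr\|^2_{L^2(\mm)}+2\bigl\||Dg|\bigr\|^2_{L^2(\mm)}.
\]
Adding the corresponding $L^2(\mm)$-identity for $f$ and $g$ themselves then yields the parallelogram identity for the full $W^{1,2}$-norm.

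First I would integrate the pointwise parallelogram rule \eqref{eq:pr_der} over $\X$ with respect to $\mm$, obtaining that the seminorm $\b\mapsto\|\b\|_2=\bigl(\int|\b|^2\,\d\mm\bigr)^{\nicefrac12}$ satisfies the parallelogram identity on ${\rm Der}^{2,2}(\X;\mm)$. A standard fact in functional analysis (either via a direct manipulation of the dual norm definition, or by passing to the completion, which is then a Hilbert space, and observing that the dual of a Hilbert space is Hilbert) yields that the dual norm on the space $\B$ introduced in Remark \ref{rmk:mathscr_L_f} also satisfies the parallelogram identity, so that $\B$ is itself a Hilbert space. Then I would invoke Remark \ref{rmk:mathscr_L_f}: the map $W^{1,2}(\X,\sfd,\mm)\ni f\mapsto\mathscr L_f\in\B$ is linear and satisfies $\|\mathscr L_f\|_\B=\bigl\||Df|\bigr\|_{L^2(\mm)}$. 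Writing the parallelogram identity in $\B$ for $\mathscr L_f$ and $\mathscr L_g$ and exploiting linearity of $f\mapsto\mathscr L_f$ gives exactly the required identity for the weak-gradient seminorm.

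I do not expect any serious obstacle: the argument is just a chain of well-known implications (pointwise parallelogram rule for derivations $\Rightarrow$ integrated parallelogram rule on $({\rm Der}^{2,2}(\X;\mm),\|\cdot\|_2)$ $\Rightarrow$ $\B$ is Hilbert $\Rightarrow$ the isometric linear embedding $f\mapsto\mathscr L_f$ transports the identity to the weak-gradient seminorm $\Rightarrow$ $W^{1,2}(\X,\sfd,\mm)$ is Hilbert). The only point demanding minor care is the transfer of the parallelogram identity from a possibly incomplete inner-product space to its dual, which is however purely formal.
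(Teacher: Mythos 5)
Your proposal is correct and follows essentially the same route as the paper's proof: integrate the pointwise identity to get the parallelogram rule for \(\|\cdot\|_2\) on \({\rm Der}^{2,2}(\X;\mm)\), deduce that the dual space \(\B\) is Hilbert, and transfer the identity to the weak-gradient seminorm via the linear isometry \(f\mapsto\mathscr L_f\) from Remark \ref{rmk:mathscr_L_f}. Your explicit remark about adding the \(L^2(\mm)\)-part to obtain the full \(W^{1,2}\)-norm identity is a small but welcome clarification that the paper leaves implicit.
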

\begin{proof}
By integrating \eqref{eq:pr_der} we see that the norm
\(\|\cdot\|_2\) on \({\rm Der}^{2,2}(\X;\mm)\) (defined
in Remark \ref{rmk:mathscr_L_f}) satisfies the parallelogram rule,
whence the dual space \(\mathbb B\) of
\(\big({\rm Der}^{2,2}(\X;\mm),\|\cdot\|_2\big)\) is a Hilbert space.
Therefore, we know from Remark \ref{rmk:mathscr_L_f} that for every
\(f,g\in W^{1,2}(\X,\sfd,\mm)\) it holds that
\[\begin{split}
{\big\||D(f+g)|\big\|}^2_{L^2(\mm)}+{\big\||D(f-g)|\big\|}^2_{L^2(\mm)}
&={\|\mathscr L_{f+g}\|}^2_{\mathbb B}+{\|\mathscr L_{f-g}\|}^2_{\mathbb B}
={\|\mathscr L_f+\mathscr L_g\|}^2_{\mathbb B}+
{\|\mathscr L_f-\mathscr L_g\|}^2_{\mathbb B}\\
&=2\,{\|\mathscr L_f\|}^2_{\mathbb B}+2\,{\|\mathscr L_g\|}^2_{\mathbb B}
=2\,{\big\||Df|\big\|}^2_{L^2(\mm)}+2\,{\big\||Dg|\big\|}^2_{L^2(\mm)},
\end{split}\]
which proves that \(W^{1,2}(\X,\sfd,\mm)\) is a Hilbert space, as required.
\end{proof}

Finally, we conclude the subsection by reporting the following consequence
of the metric version of Smirnov's superposition principle, which has been
proven by E.\ Paolini and E.\ Stepanov in \cite{PaoSte12}.
\begin{theorem}[Superposition principle]\label{thm:superposition_principle}
Let \((\X,\sfd,\mm)\) be a metric measure space with \(\mm\) finite.
Let \(\b\in{\rm Der}^{2,2}(\X;\mm)\). Then there exists a finite,
non-negative Borel measure \(\ppi\) on \(C\big([0,1],\X\big)\), concentrated
on the set of non-constant Lipschitz curves on \(\X\) having constant
speed, such that
\begin{subequations}\begin{align}
\int g\,\b(f)\,\d\mm&=
\int\!\!\!\int_0^1 g(\gamma_t)\,(f\circ\gamma)'_t\,\d t\,\d\ppi(\gamma),
\label{eq:superposition_principle_1}\\
\int g\,|\b|\,\d\mm&=
\int\!\!\!\int_0^1 g(\gamma_t)\,|\dot\gamma_t|\,\d t\,\d\ppi(\gamma)
\label{eq:superposition_principle_2}
\end{align}\end{subequations}
for every \(f,g\in\LIP_{bs}(\X)\).
\end{theorem}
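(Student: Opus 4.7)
The plan is to reduce the assertion to the metric version of Smirnov's decomposition theorem for normal \(1\)-currents proven by Paolini and Stepanov in \cite{PaoSte12}, by associating to each derivation \(\b\in{\rm Der}^{2,2}(\X;\mm)\) a corresponding metric \(1\)-current in the sense of Ambrosio--Kirchheim. The finiteness of \(\mm\), together with the assumptions \(|\b|,{\rm div}(\b)\in L^2(\mm)\), guarantees that this current has finite mass and finite boundary mass, so that the decomposition theorem applies.

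First, I would define
\[
T_\b(f,g)\coloneqq\int f\,\b(g)\,\d\mm,\qquad f,g\in\LIP_{bs}(\X),
\]
and verify that \(T_\b\) satisfies the axioms of a metric \(1\)-current: multilinearity is immediate from the linearity of \(\b\); continuity under pointwise convergence with uniformly bounded Lipschitz constants is precisely Lemma \ref{lem:der_continuity}; and the required locality axiom follows from the Leibniz rule combined with the weak locality of \(\b\). The key identification is that of the mass measure: by the very definition of \(|\b|\) as essential supremum over \(1\)-Lipschitz test functions, one obtains \(\|T_\b\|=|\b|\,\mm\) as Borel measures on \(\X\). Moreover,
\[
\partial T_\b(g)=T_\b(1,g)=\int\b(g)\,\d\mm=-\int g\,{\rm div}(\b)\,\d\mm,
\]
so \(\partial T_\b=-{\rm div}(\b)\,\mm\) has finite total variation because \(\mm\) is finite and \({\rm div}(\b)\in L^2(\mm)\subseteq L^1(\mm)\). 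Hence \(T_\b\) is a normal \(1\)-current.

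Second, I would apply the Paolini--Stepanov decomposition theorem to \(T_\b\), producing a finite non-negative Borel measure \(\ppi\) on \(C\big([0,1],\X\big)\), concentrated on non-constant Lipschitz curves, such that
\[
T_\b=\int T_\gamma\,\d\ppi(\gamma)\qquad\text{and}\qquad \|T_\b\|=\int\|T_\gamma\|\,\d\ppi(\gamma),
\]
where \(T_\gamma\) is the current associated to \(\gamma\) via \(T_\gamma(f,g)=\int_0^1 f(\gamma_t)(g\circ\gamma)'_t\,\d t\), and \(\|T_\gamma\|\) is the pushforward under \(\gamma\) of the measure \(|\dot\gamma_t|\,\d t\) on \([0,1]\). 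Arranging each curve in the support of \(\ppi\) to have constant speed is achieved by a standard reparametrization, which leaves both \(T_\gamma\) and its mass measure unchanged. Unpacking the current identity on a pair \((g,f)\) and testing the mass identity against \(g\in\LIP_{bs}(\X)\) then yields precisely \eqref{eq:superposition_principle_1} and \eqref{eq:superposition_principle_2}.

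The hard part is the pointwise identification \(\|T_\b\|=|\b|\,\mm\), not merely the equality of total masses but as Borel measures on \(\X\). Here the representation formula of Proposition \ref{prop:ptwse_norm_b} is the decisive ingredient, since it exhibits \(|\b|\) as a countable essential supremum of evaluations of \(\b\) on explicit boundedly-supported \(1\)-Lipschitz functions, and the analogous countable supremum in the current framework defines \(\|T_\b\|\) on each Borel set. Once this correspondence is in place, the conclusion reduces to a direct invocation of the decomposition theorem of \cite{PaoSte12}, together with the routine reparametrization step ensuring the constant-speed and non-constancy conventions.
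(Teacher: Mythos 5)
Your proposal is correct and takes essentially the same route as the paper, whose proof of this theorem is simply a citation of \cite[Theorem 4.9]{DMGPS18} and \cite[Lemma 6.1]{DMGPS18}; those results are established exactly as you outline, by associating to \(\b\) the normal metric \(1\)-current \(T_\b\), identifying \(\|T_\b\|=|\b|\,\mm\) via the representation formula of Proposition \ref{prop:ptwse_norm_b}, and invoking the Paolini--Stepanov decomposition followed by the constant-speed reparametrization. No gaps.
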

\begin{proof}
Combine \cite[Theorem 4.9]{DMGPS18} with \cite[Lemma 6.1]{DMGPS18}.
\end{proof}
\section{Monotone approximation of generalised metrics}\label{s:approx_metr}
\subsection{Set-up and auxiliary results}
We begin with some classical definitions.
A norm \(\n\) defined on a finite-dimensional vector space \(V\)
is said to be \emph{smooth} provided it is of class \(C^\infty\)
on \(V\setminus\{0\}\). In addition, we say that \(\n\)
is \emph{strongly convex} if the Hessian matrix of \(\n^2\) at any
vector \(v\in V\setminus\{0\}\) is positive definite.
With the notation \(W\leq V\) we intend that \(W\) is a vector subspace of \(V\).
\medskip

By \emph{smooth manifold} we shall always mean a connected differentiable
manifold of class \(C^\infty\).
Given a smooth manifold \(\M\) and a smooth vector bundle \((\E,\pi)\) over \(\M\),
we say that a function \(F\colon\E\to[0,+\infty)\) is a \emph{Finsler metric}
over \(\E\) if it is continuous, it is smooth on the complement of the zero section,
and \(F|_{\E_x}\) is a strongly convex norm on the fiber
\(\E_x\coloneqq\pi^{-1}(x)\) for every \(x\in\M\).

By Finsler metric on \(\M\) we mean a Finsler metric \(F\) over the tangent
bundle \(\T\M\). In this case, we also say that the couple \((\M,F)\) is a
\emph{Finsler manifold}.
(In the literature, \((\M,F)\) is often referred to as a
reversible Finsler manifold; cf., for instance, the monograph
\cite{BCS00}.)
\begin{definition}[Generalised norm]
Let \(V\) be a vector space. Then a function \(\n\colon V\to[0,+\infty]\)
is said to be a \emph{generalised norm} if there exists a vector
subspace \(D(\n)\neq\{0\}\) of \(V\) such that \(\n|_{D(\n)}\) is a norm
on \(D(\n)\) and \(\n(v)=+\infty\) holds for every \(v\in V\setminus D(\n)\).
\end{definition}
\begin{theorem}[Definition of continuous distribution]\label{thm:rank_vary_distr}
Let \(\M\) be a smooth manifold and let \((\E,\pi)\) be a smooth vector bundle
over \(\M\). Let \(\{V_x\}_{x\in\M}\) be a family of vector spaces
such that \(V_x\leq\E_x\) for all \(x\in\M\).
Then the following conditions are equivalent:
\begin{itemize}
\item[\(\rm i)\)] Given \(\bar x\in\M\) and \(\bar v\in V_{\bar x}\),
there exists a continuous section \(v\) of \(\E\), defined on some
neighbourhood \(U\) of \(\bar x\), such that \(v(\bar x)=\bar v\)
and \(v(x)\in V_x\) for every \(x\in U\).
\item[\(\rm ii)\)] Given \(\bar x\in\M\), there exist finitely
many continuous sections \(v_1,\ldots,v_k\) of \(\E\), defined on some
neighbourhood \(U\) of \(\bar x\), such that
\(V_x={\rm span}\big\{v_1(x),\ldots,v_k(x)\big\}\)
for every \(x\in U\).
\item[\(\rm iii)\)] Given \(\bar x\in\M\), there exist a neighbourhood
\(U\) of \(\bar x\), a smooth vector bundle \(\tilde\E\) over \(U\), and
a continuous vector bundle morphism \(\psi\colon\tilde\E\to\E|_U\), such
that \(V_x=\psi(\tilde\E_x)\) for all \(x\in U\).
\end{itemize}
If the above conditions are satisfied, we say that \(\{V_x\}_{x\in\M}\)
is a \emph{continuous distribution (of possibly varying rank)} over \(\M\).
Moreover, we can assume that \(k\) and the rank of \(\tilde\E\) are at most
\(d\,2^{2\cdot 5^n-1}\), where \(d\) is the rank of \(\E\) and
\(n\) is the dimension of \(\M\).
\end{theorem}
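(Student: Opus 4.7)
The plan is to prove i) $\Rightarrow$ ii) (the main content), and to verify the three easier equivalences ii) $\Leftrightarrow$ iii) and ii) $\Rightarrow$ i) separately.

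For ii) $\Rightarrow$ iii), I would take $\tilde\E := U\times\R^k$ (the trivial rank-$k$ bundle over $U$) and define the continuous bundle morphism $\psi\colon\tilde\E\to\E|_U$ by $\psi(x,a_1,\dots,a_k) := \sum_i a_i v_i(x)$, which satisfies $\psi(\tilde\E_x) = \mathrm{span}\{v_1(x),\dots,v_k(x)\} = V_x$. Conversely, for iii) $\Rightarrow$ ii), one shrinks $U$ so that $\tilde\E$ trivialises and picks a smooth frame $e_1,\dots,e_r$ of $\tilde\E$; the continuous sections $v_i := \psi\circ e_i$ of $\E|_U$ then satisfy $\mathrm{span}\{v_1(x),\dots,v_r(x)\} = \psi(\tilde\E_x) = V_x$. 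For ii) $\Rightarrow$ i), given $\bar v \in V_{\bar x}$, write $\bar v = \sum_i c_i v_i(\bar x)$ and set $v(x) := \sum_i c_i v_i(x)$.

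The nontrivial direction is i) $\Rightarrow$ ii). The key observation is that condition i) forces $x\mapsto \dim V_x$ to be lower semicontinuous: given a basis $\bar v_1,\dots,\bar v_{k_0}$ of $V_{\bar x}$ (with $k_0 = \dim V_{\bar x}$) and continuous $V$-valued sections $v_1,\dots,v_{k_0}$ extending it via i), these remain linearly independent on a neighbourhood $U_0$ of $\bar x$, so $\dim V_x \geq k_0$ throughout $U_0$. The $v_i$ may however fail to span $V_x$ at points where $\dim V_x > k_0$. I would then iterate: at every such $y$ invoke i) again to obtain finitely many continuous $V$-valued sections completing a basis of $V_y$ on a small neighbourhood $U_y$ of $y$. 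On a compact neighbourhood of $\bar x$, using that $\dim V_x\leq d$ takes only finitely many values, the cover $\{U_y\}$ admits a finite subcover; a subordinate partition of unity $\{\phi_\alpha\}$ then enables gluing: since each $V_x$ is a linear subspace, the products $\phi_\alpha v_{\alpha,i}$ (extended continuously by zero outside $\mathrm{supp}\,\phi_\alpha$) are sections of $\E$ taking values in $V$, and at every $x$ some $\alpha$ has $\phi_\alpha(x)>0$, making the corresponding scaled sections span $V_x$. Finitely many such sections form the desired frame.

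The main obstacle will be the explicit quantitative bound $k, \mathrm{rank}(\tilde\E)\leq d\,2^{2\cdot 5^n-1}$. I expect this to follow from a more refined version of the above patching, most naturally by induction on $n = \dim\M$: one would cover a neighbourhood of $\bar x$ by pieces of controlled combinatorial type -- reflecting the fact that topological $n$-manifolds have covering dimension $n$ and admit good covers of bounded multiplicity -- and recursively combine sections drawn from lower-dimensional strata. The doubly-exponential $5^n$ growth in the exponent suggests that each inductive step multiplies the bound by a bounded factor (with a small additional factor of $2$ accommodating both the "old" basis already constructed and the "new" directions to be added at each step). Pinning down the exact constant is the technically delicate part of the proof.
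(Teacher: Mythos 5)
The easy implications (your ii)\(\Rightarrow\)iii), iii)\(\Rightarrow\)ii), ii)\(\Rightarrow\)i)) are fine and essentially match the paper, which closes the cycle via iii)\(\Rightarrow\)i) instead. The problem is in i)\(\Rightarrow\)ii), where your gluing step has a genuine gap. Your patches \(U_y\) carry sections that form a basis of \(V_y\) \emph{at the centre} \(y\); by continuity they stay linearly independent on \(U_y\), hence they span a subspace of \(V_x\) of dimension \(\dim V_y\) for \(x\in U_y\) -- but this equals \(V_x\) only where \(\dim V_x=\dim V_y\). After passing to a finite subcover, a point \(x\) may well be covered only by patches centred at points of strictly smaller dimension, and then none of your sections \(\varphi_\alpha v_{\alpha,i}\) (kept separate or summed) span \(V_x\). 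Concretely: on \(\M=\R\), \(\E=\R\times\R^2\), take \(V_x={\rm span}\{(1,0)\}\) for \(x\le 0\) and \(V_x=\R^2\) for \(x>0\); condition i) holds, the stratum \(\{\dim V_x=2\}=(0,\infty)\) is open and not relatively compact near \(0\), and any finite subcover of \([-1,1]\) leaves points \(x>0\) arbitrarily close to \(0\) covered only by patches whose sections span the line \(\R\times\{0\}\subsetneq V_x\). The underlying issue is that only \(C_i=\{\dim V_x\le i\}\) is closed (lower semicontinuity of the dimension), so the set where extra sections are needed, \(K\setminus C_i\), is open and non-compact, and covering it forces \emph{countably} many patches. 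The real content of the theorem is compressing these countably many local spanning families into a single finite one, and this is what your partition-of-unity argument does not deliver.

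This is precisely where the paper's machinery enters, and your closing speculation about "covers of bounded multiplicity" is pointing in the right direction but is not a proof. The paper exhausts \(K\setminus C_i\) by dyadic compact annuli \(K_j\) around \(\varphi^{-1}(C_i)\), runs the compactness argument on each \(K_j\) with a cover by balls of multiplicity at most \(5^n\), partitions the centres into \(5^n\) groups of pairwise disjoint balls, and then forms the \(2^{2\cdot 5^n-1}\) sections \(\sum_{j\,{\rm even}}2^{-j}\eta_j v_{j\alpha}\pm\sum_{j\,{\rm odd}}2^{-j}\eta_j v_{j\beta}\) with sign multi-indices \(\alpha,\beta\in\{-1,1\}^{5^n-1}\). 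The signs are essential: they let one recover, by taking differences within the finite family, the individual contribution \(\psi_p w_{\iota(p)}\) of the unique ball of a given group containing \(x\), so that together with \(\mathcal F_i(x)\) the dimension increases by one at \emph{every} point of \(K\setminus C_i\) simultaneously. Iterating over \(i=1,\dots,d\) gives both the finiteness and the bound \(d\,2^{2\cdot 5^n-1}\). Without some version of this countable-to-finite compression (annuli, bounded multiplicity, signed sums), the implication i)\(\Rightarrow\)ii) -- even without the explicit constant -- remains unproved in your proposal.
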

\begin{proof}
The real novelty of the theorem is the implication \({\rm i)}\Longrightarrow{\rm ii)}\).\\
{\color{blue}\({\rm i)}\Longrightarrow{\rm ii)}\)} Suppose item i) holds.
Given a point \(\bar x\in\M\), we can choose an open set \(U'\subseteq\R^n\)
containing \(0\) and a map \(\varphi\colon U'\to\M\) satisfying
\(\varphi(0)=\bar x\) that is a homeomorphism with its image.
Possibly shrinking \(U'\), we can assume there exists a Finsler metric
\(F\) over \(\E|_U\), where \(U\coloneqq\varphi(U')\). Fix any
radius \(\lambda>0\) such that \(\bar B_\lambda(0)\subseteq U'\) and
call \(K\coloneqq\varphi\big(\bar B_\lambda(0)\big)\subseteq U\).
For any \(i=1,\ldots,d\) we set \(C_i\coloneqq\{x\in K\,:\,\dim V_x\leq i\}\).
In order to prove ii), it would be enough to find some finite families
\(\mathcal F_1\subseteq\ldots\subseteq\mathcal F_d\) of continuous
sections of \(\E|_K\) such that for any \(i=1,\ldots,d\) it holds that
\[\begin{split}
V_x=\mathcal F_i(x)\coloneqq{\rm span}\big\{v(x)\;\big|\;v\in\mathcal F_i\big\}&
\quad\text{ for every }x\in C_i,\\
\mathcal F_i(x)\leq V_x\;\text{ and }\,\dim\mathcal F_i(x)\geq i&
\quad\text{ for every }x\in K\setminus C_i.
\end{split}\]
We build \(\mathcal F_1,\ldots,\mathcal F_d\) via a recursive argument.
Suppose to have already defined \(\mathcal F_1,\ldots,\mathcal F_i\) for
some \(i<d\). Notice that item i) grants that the function
\(\M\ni x\mapsto\dim V_x\) is lower semicontinuous, thus \(C_i\) is a
compact set. For any \(j\in\N\) we define the compact set \(K_j\subseteq K\) as
\[
K_j\coloneqq\varphi\bigg(\bigg\{y\in\bar B_\lambda(0)\;\bigg|\;
\frac{\lambda}{2^{j+1}}\leq{\rm dist}\big(\varphi^{-1}(C_i),y\big)
\leq\frac{\lambda}{2^{j-1}}\bigg\}\bigg).
\]
Observe that \(\bigcup_{j\in\N}K_j=K\setminus C_i\) and that
\(\mathring{K_j}\cap\mathring{K_{j'}}=\emptyset\) for all
\(j,j'\in\N\) such that \(|j-j'|\) is even.

Let \(j\in\N\) be fixed. For any \(x\in K_j\), we choose a vector
\(\bar w_x\in V_x\) such that \(F(\bar w_x)=1\) and
\(\dim\big(\mathcal F_i(x)+\R\,\bar w_x\big)\geq i+1\).
By item i), we can find a neighbourhood \(W_x\subseteq U\) of \(x\)
and a continuous section \(w_x\) of \(\E|_{W_x}\), such that \(w_x(x)=\bar w_x\)
and \(w_x(z)\in V_z\) for all \(z\in W_x\). Possibly shrinking \(W_x\),
we can further assume that \(0<F\big(w_x(z)\big)\leq 2\) and
\(\dim\big(\mathcal F_i(z)+\R\,w_x(z)\big)\geq i+1\) hold for every point \(z\in W_x\).
By compactness of \(K_j\), we can thus find an open covering
\(W_1,\ldots,W_m\subseteq U\) of \(K_j\) and continuous sections
\(w_1,\ldots,w_m\) of \(\E|_{W_1},\ldots,\E|_{W_m}\), respectively,
such that \(0<F\big(w_\iota(x)\big)\leq 2\) and
\(\dim\big(\mathcal F_i(x)+\R\,w_\iota(x)\big)\geq i+1\)
for every \(\iota=1,\ldots,m\) and \(x\in W_\iota\).
By Lebesgue's number lemma, there exists \(r>0\) such that
any ball in \(\R^n\) of radius \(r\) centered at \(\varphi^{-1}(K_j)\)
is entirely contained in one of the sets \(\varphi^{-1}(W_1),\ldots,\varphi^{-1}(W_m)\).
Choose a maximal \(r\)-separated subset \(S\) of \(\varphi^{-1}(K_j)\),
i.e., \(S\) is maximal among all subsets satisfying \(|p-q|\geq r\)
for every \(p,q\in S\) with \(p\neq q\). Note that \(S\) is a finite
set by compactness of \(\varphi^{-1}(K_j)\). For any \(p\in S\),
call \(G_p\coloneqq\varphi\big(B_r(p)\big)\) and pick \(\iota(p)\in\{1,\ldots,m\}\)
such that \(G_p\subseteq W_{\iota(p)}\). By definition of \(S\), it holds
that \(K_j\subseteq\bigcup_{p\in S}G_p\). Moreover, given any \(p\in S\)
we have that the balls \(\big\{B_{r/2}(q)\,:\,q\in S\setminus\{p\},\,|p-q|<2r\big\}\)
are pairwise disjoint and contained in \(B_{5r/2}(p)\setminus B_{r/2}(p)\),
whence accordingly \(\#\big\{q\in S\setminus\{p\}\;:\;|p-q|<2r\big\}\leq 5^n-1\)
for all \(p\in S\). Therefore, we can take a partition \(S=S_1\cup\ldots\cup S_{5^n}\)
with the property that \(G_p\cap G_q=\emptyset\) whenever \(\ell=1,\ldots,5^n\)
and \(p,q\in S_\ell\) satisfy \(p\neq q\).
Given \(\ell=1,\ldots,5^n\) and \(p\in S_\ell\),
we can pick a continuous function \(\psi_p\colon K\to[0,1]\) satisfying \(\psi_p=0\)
on \(K_j\setminus G_p\) and \(\psi_p>0\) on \(K_j\cap G_p\). For any multi-index
\(\alpha=(\alpha_2,\ldots,\alpha_{5^n})\in\{-1,1\}^{5^n-1}\), we define
\[
v_{j\alpha}(x)\coloneqq\sum_{p\in S_1}\psi_p(x)\,w_{\iota(p)}(x)+
\sum_{\ell=2}^{5^n}\alpha_\ell\sum_{p\in S_\ell}\psi_p(x)\,w_{\iota(p)}(x)
\quad\text{ for every }x\in K_j.
\]
Notice that \(F\big(v_{j\alpha}(x)\big)\leq 2\cdot 5^n\) for every
\(\alpha\in\{-1,1\}^{5^n-1}\) and \(x\in K_j\).
For any \(j\in\N\) we fix a continuous functions \(\eta_j\colon K\to[0,1]\)
such that \(\eta_j=0\) on \(K\setminus\mathring{K_j}\) and \(\eta_j>0\) on
\(\mathring{K_j}\). Then we set
\[
\mathcal F'_{i+1}\coloneqq\bigg\{\sum_{j\text{ even}}\frac{\eta_j}{2^j}\,v_{j\alpha}
\pm\sum_{j\text{ odd}}\frac{\eta_j}{2^j}\,v_{j\beta}\;\bigg|\;
\alpha,\beta\in\{-1,1\}^{5^n-1}\bigg\}.
\]
Therefore, it follows from the construction that the family
\(\mathcal F_{i+1}\coloneqq\mathcal F_i\cup\mathcal F'_{i+1}\) of continuous
sections of \(\E|_K\) satisfies \(\dim\mathcal F_{i+1}(x)\geq i+1\) for
every \(x\in K\), as required. Observe also that \(\#\mathcal F'_i\leq 2^{2\cdot 5^n-1}\)
for all \(i=1,\ldots,d\), thus the cardinality of \(\mathcal F\coloneqq\mathcal F_d\)
does not exceed \(d\,2^{2\cdot 5^n-1}\). This proves ii).\\
{\color{blue}\({\rm ii)}\Longrightarrow{\rm iii)}\)} Suppose item ii) holds.
Given a point \(\bar x\in\M\), pick a neighbourhood \(U\) of \(\bar x\)
and some continuous sections \(v_1,\ldots,v_k\) of \(\E|_U\) such
that \(V_x={\rm span}\big\{v_1(x),\ldots,v_k(x)\big\}\) for every \(x\in U\).
Let us define \(\tilde\E\coloneqq U\times\R^k\) and the continuous vector bundle
morphism \(\psi\colon\tilde E\to\E|_U\) as
\[
\psi(x,\lambda)\coloneqq\sum_{i=1}^k\lambda_i\,v_i(x)
\quad\text{ for every }x\in U\text{ and }\lambda=(\lambda_1,\ldots,\lambda_k)\in\R^k.
\]
Therefore, we conclude that
\(\psi(\tilde\E_x)={\rm span}\big\{v_1(x),\ldots,v_k(x)\big\}=V_x\)
for all \(x\in U\), thus proving iii).\\
{\color{blue}\({\rm iii)}\Longrightarrow{\rm i)}\)} Suppose item iii) holds.
Fix \(\bar x\in\M\) and \(\bar v\in V_{\bar x}\). There exist a smooth
vector bundle \(\tilde\E\) over some neighbourhood \(U'\) of \(\bar x\) and a continuous
vector bundle morphism \(\psi\colon\tilde\E\to\E|_{U'}\) such that \(V_x=\psi(\tilde\E_x)\)
for all \(x\in U'\). Choose any \(\bar w\in\tilde\E_{\bar x}\) for which
\(\psi(\bar w)=\bar v\). Then we can find a neighbourhood \(U\subseteq U'\) of
\(\bar x\) and a continuous section \(w\) of \(\tilde\E|_U\) such that
\(w(\bar x)=\bar w\). Now let us define \(v(x)\coloneqq\psi\big(w(x)\big)\)
for every \(x\in U\). Therefore, it holds that \(v\) is a continuous section of
\(\E|_U\) such that \(v(\bar x)=\bar v\) and \(v(x)\in V_x\) for all \(x\in U\),
thus proving i).
\end{proof}
\begin{remark}\label{rmk:dim_lsc}{\rm
As already observed during the proof of Theorem \ref{thm:rank_vary_distr},
the function \(\M\ni x\mapsto\dim V_x\) is lower semicontinuous
whenever \(\{V_x\}_{x\in\M}\) is a continuous distribution over \(\M\).
\fr}\end{remark}
\begin{definition}[Generalised metric]\label{def:gen_metric}
Let \(\M\) be a smooth manifold, \((\E,\pi)\) a smooth vector
bundle over \(\M\). Then a \emph{generalised metric} over \(\E\) is a lower
semicontinuous function \(\rho\colon\E\to[0,+\infty]\) having the
following properties:
\begin{itemize}
\item[\(\rm i)\)] \(\rho_x\coloneqq\rho|_{\E_x}\) is a generalised
norm on \(\E_x\) for every \(x\in\M\).
\item[\(\rm ii)\)] The family \(\big\{D(\rho_x)\}_{x\in\M}\)
is a continuous distribution.
\end{itemize}
In the case \(\E=\T\M\), we just say that \(\rho\) is a generalised metric on \(\M\).
\end{definition}
Let us fix some notation: given any vector subspace \(V\leq\R^d\), we denote by
\(V^\perp\) its orthogonal complement (with respect to the Euclidean norm).
We denote by \(\mathbb S^{d-1}\) the Euclidean unit sphere in \(\R^d\), i.e., the
set of all points \(x=(x_1,\ldots,x_d)\in\R^d\) such that \(x_1^2+\ldots+x_d^2=1\),
while \(x\cdot y\) stands for the Euclidean scalar product between \(x\in\R^d\)
and \(y\in\R^d\). Finally, given a metric space \((\X,\sfd)\) and two compact
non-empty sets \(A,B\subseteq\X\), we shall denote by \(\sfd_H(A,B)\) the Hausdorff
distance between \(A\) and \(B\), i.e.,
\begin{equation}\label{eq:def_Hausd_dist}
\sfd_H(A,B)\coloneqq\max\bigg\{\sup_{x\in A}\inf_{y\in B}\sfd(x,y),\,
\sup_{y\in B}\inf_{x\in A}\sfd(x,y)\bigg\}.
\end{equation}
\begin{lemma}\label{lem:approx_norm}
Let \(\{0\}\neq V\leq\R^d\) be given. Let \(\|\cdot\|\) be a norm on \(V\).
Fix a constant \(\lambda>0\) and a norm \(\|\cdot\|'\) on \(\R^d\) such that
\(\|v\|'<\|v\|\) for all \(v\in V\setminus\{0\}\). Then there exists a norm
\(\n\) on \(\R^d\) such that the following properties are satisfied:
\[\begin{split}
\n(v)=\|v\|&\quad\text{ for every }v\in V,\\
\n(v)>\|v\|'&\quad\text{ for every }v\in\R^d\setminus\{0\},\\
\n(v)\geq\lambda&\quad\text{ for every }v\in V^\perp\cap\mathbb S^{d-1}.
\end{split}\]
\end{lemma}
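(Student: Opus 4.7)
The plan is to construct \(\n\) explicitly from the orthogonal decomposition \(\R^d=V\oplus V^\perp\). The case \(V=\R^d\) is trivial (condition (iii) is vacuous, and one may take \(\n=\|\cdot\|\)), so I assume \(V^\perp\neq\{0\}\). Let \(P_V\) and \(P_{V^\perp}\) denote the orthogonal projections and set
\[
\n(v)\coloneqq\|P_V v\|+\alpha\,|P_{V^\perp}v|,\qquad v\in\R^d,
\]
where \(\alpha>0\) is a parameter to be chosen. This is a norm on \(\R^d\), being the sum of two seminorms whose common kernel is \(\{0\}\); moreover, the first property of the lemma is automatic, since \(v\in V\) gives \(P_{V^\perp}v=0\) and hence \(\n(v)=\|v\|\).

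The remaining two properties are tuned by \(\alpha\). Condition (iii) amounts to \(\alpha\geq\lambda\), because every \(v\in V^\perp\cap\mathbb S^{d-1}\) satisfies \(\n(v)=\alpha\). For condition (ii), I would introduce
\[
M'\coloneqq\max\big\{\|w\|'\,\big|\,w\in V^\perp\cap\mathbb S^{d-1}\big\},
\]
which is finite by compactness, and then choose any \(\alpha>\max\{\lambda,M'\}\). The strict inequality \(\n(v)>\|v\|'\) is then verified by splitting \(v=v_1+v_2\) with \(v_1\in V\) and \(v_2\in V^\perp\). When \(v_1\neq 0\), combining the triangle inequality for \(\|\cdot\|'\), the strict hypothesis \(\|v_1\|'<\|v_1\|\), and the elementary bound \(\|v_2\|'\leq M'|v_2|\leq\alpha|v_2|\) gives
\[
\|v\|'\leq\|v_1\|'+\|v_2\|'<\|v_1\|+\alpha|v_2|=\n(v).
\]
When \(v_1=0\), the vector \(v=v_2\) lies in \(V^\perp\setminus\{0\}\), and \(\|v\|'\leq M'|v_2|<\alpha|v_2|=\n(v)\) thanks to \(\alpha>M'\).

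The only subtle point is propagating strictness from \(V\) (where the hypothesis supplies it) to all of \(\R^d\). This is handled by the triangle inequality for \(\|\cdot\|'\) whenever the \(V\)-component of \(v\) is non-zero, and by picking \(\alpha\) strictly larger than the norm ratio on \(V^\perp\) when that component is zero; the same \(\alpha\) simultaneously takes care of (iii) by being at least \(\lambda\).
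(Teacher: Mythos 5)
Your proof is correct and essentially coincides with the paper's: the paper defines $\n(v)=\|P_Vv\|+\lambda'\,|P_{V^\perp}v|$ in an adapted orthonormal basis with $\lambda'=\lambda+\max\{\|w\|':w\in\mathbb S^{d-1}\}$, and verifies the three properties by the same splitting argument. Your only (harmless) deviations are taking the maximum of $\|\cdot\|'$ over $V^\perp\cap\mathbb S^{d-1}$ rather than all of $\mathbb S^{d-1}$ and treating the case $V=\R^d$ separately.
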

\begin{proof}
Call \(\lambda'\coloneqq\lambda+\max\big\{\|v\|'\,:\,v\in\mathbb S^{d-1}\big\}\)
and \(k\coloneqq\dim V\). Fix any orthonormal basis \(e_1,\ldots,e_d\) of \(\R^d\)
(equipped with the Euclidean norm) such that \(e_1,\ldots,e_k\) is a basis of \(V\).
Hence, we define the norm \(\n\) on \(\R^d\) as follows:
given any \(\alpha=(\alpha_1,\ldots,\alpha_d)\in\R^d\), we set
\begin{equation}\label{eq:def_n}
\n(\alpha_1\,e_1+\ldots+\alpha_d\,e_d)\coloneqq
\|\alpha_1\,e_1+\ldots+\alpha_k\,e_k\|+\lambda'\,\big|(\alpha_{k+1},\ldots,\alpha_d)\big|.
\end{equation}
It directly follows from its very definition that the norm \(\n\) satisfies
\(\n(v)=\|v\|\) for every \(v\in V\) and \(\n(v)=\lambda'>\lambda\)
for every \(v\in V^\perp\cap\mathbb S^{d-1}\). Finally, for any
choice of \(\alpha=(\alpha_1,\ldots,\alpha_d)\in\R^d\) with
\((\alpha_{k+1},\ldots,\alpha_d)\neq 0\) we have that
\[\begin{split}
\n(\alpha_1\,e_1+\ldots+\alpha_d\,e_d)&=\|\alpha_1\,e_1+\ldots+\alpha_k\,e_k\|
+\lambda'\,\big|(\alpha_{k+1},\ldots,\alpha_d)\big|\\
&\geq\|\alpha_1\,e_1+\ldots+\alpha_k\,e_k\|'
+\lambda'\,\big|(\alpha_{k+1},\ldots,\alpha_d)\big|\\
&>\|\alpha_1\,e_1+\ldots+\alpha_k\,e_k\|'+\bigg\|\frac{\alpha_{k+1}\,e_{k+1}+\ldots
+\alpha_d\,e_d}{\big|(\alpha_{k+1},\ldots,\alpha_d)\big|}\bigg\|'
\,\big|(\alpha_{k+1},\ldots,\alpha_d)\big|\\
&\geq\|\alpha_1\,e_1+\ldots+\alpha_d\,e_d\|',
\end{split}\]
thus completing the proof of the statement.
\end{proof}
In the following results, we shall consider the trivial bundle \(\M\times\R^d\)
over \(\M\). Given any \(x\in\M\), a vector subspace of the fiber of
\(\M\times\R^d\) at \(x\) is of the form \(\{x\}\times V\), for some vector
subspace \(V\leq\R^d\). For simplicity, we will always implicitly identify
\(\{x\}\times V\) with the vector space \(V\) itself.
\begin{lemma}\label{lem:distrib_cont}
Let \(\M\) be a smooth manifold and \(\rho\) a generalised metric
over \(\M\times\R^d\). Fix \(\bar x\in\M\) and any norm \(\|\cdot\|\)
on \(\R^d\). Call \(V_x\coloneqq D(\rho_x)\) for every \(x\in\M\)
and \(k\coloneqq\dim V_{\bar x}\). Then for any \(\eps>0\) there
exists a neighbourhood \(U\) of \(\bar x\) such that
\[
\sfd_H(V_{\bar x}\cap\mathbb S^{d-1},V_x\cap\mathbb S^{d-1})\leq\eps
\quad\text{ for every }x\in U\text{ with }\dim V_x=k,
\]
where the Hausdorff distance \(\sfd_H\) is computed with respect to the
norm \(\|\cdot\|\).
\end{lemma}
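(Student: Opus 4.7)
The plan is to use Theorem \ref{thm:rank_vary_distr} ii) to produce continuous local sections spanning the fibres \(V_x\) near \(\bar x\), and to deduce the Hausdorff estimate from their uniform continuity. More precisely, since \(\{V_x\}_{x\in\M}\) is a continuous distribution, I would fix continuous sections \(v_1,\ldots,v_m\) of the trivial bundle \(\M\times\R^d\) on some neighbourhood \(U_0\) of \(\bar x\) with \(V_x={\rm span}\{v_1(x),\ldots,v_m(x)\}\) for every \(x\in U_0\). After relabelling, I may assume that \(v_1(\bar x),\ldots,v_k(\bar x)\) is a basis of \(V_{\bar x}\); by continuity of the Gram determinant of \(v_1(x),\ldots,v_k(x)\) (with respect to the Euclidean scalar product) and its positivity at \(\bar x\), these \(k\) sections remain linearly independent on a smaller neighbourhood \(U_1\subseteq U_0\) of \(\bar x\), with Gram determinant bounded away from \(0\). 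Whenever \(x\in U_1\) additionally satisfies \(\dim V_x=k\), the inclusion \({\rm span}\{v_1(x),\ldots,v_k(x)\}\subseteq V_x\) must be an equality for dimensional reasons.

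The forward half of the Hausdorff bound is the easier one: any \(u\in V_{\bar x}\cap\mathbb S^{d-1}\) can be written as \(u=\sum_i\alpha_i v_i(\bar x)\) with coefficients \((\alpha_1,\ldots,\alpha_k)\) depending linearly on \(u\), hence bounded uniformly in \(u\) over the compact set \(V_{\bar x}\cap\mathbb S^{d-1}\). Replacing \(v_i(\bar x)\) with \(v_i(x)\) then produces a vector \(u(x)\coloneqq\sum_i\alpha_i v_i(x)\in V_x\) satisfying \(\|u(x)-u\|\leq C\max_i\|v_i(x)-v_i(\bar x)\|\) for some constant \(C\) independent of \(u\); a rescaling brings \(u(x)\) back onto \(\mathbb S^{d-1}\), and a routine triangle-inequality argument (using the equivalence of \(\|\cdot\|\) with the Euclidean norm) shows the resulting estimate remains uniform in \(u\).

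The reverse half is the main technical point, and it is where the hypothesis \(\dim V_x=k\) is used crucially. Given \(w\in V_x\cap\mathbb S^{d-1}\), I would solve \(w=\sum_i\beta_i v_i(x)\) via the Gram system \(G(x)\beta=b\), with \(b_j\coloneqq v_j(x)\cdot w\). The coefficients \(\beta_i\) must stay uniformly bounded in \(x\in U_1\) and \(w\in V_x\cap\mathbb S^{d-1}\); this is precisely what the lower bound on \(\det G(x)\) together with the uniform bounds on \(G(x)\) and \(v_j(x)\) guarantees. Once this uniform control is secured, the vector \(w'\coloneqq\sum_i\beta_i v_i(\bar x)\in V_{\bar x}\) satisfies \(\|w'-w\|\leq C'\max_i\|v_i(x)-v_i(\bar x)\|\), and normalising yields the desired point of \(V_{\bar x}\cap\mathbb S^{d-1}\). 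I expect this uniform control on the \(\beta_i\) to be the main obstacle, and it is exactly what fails in general without the assumption \(\dim V_x=k\): if the dimension jumped, the span of \(v_1(x),\ldots,v_k(x)\) would be a proper subspace of \(V_x\), and the extra directions in \(V_x\) would generally stay far from \(V_{\bar x}\), so the condition cannot be dropped.
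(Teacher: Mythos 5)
Your proposal is correct and follows essentially the same route as the paper: both transport unit vectors between the fibres \(V_{\bar x}\) and \(V_x\) using a continuous local frame \(v_1,\ldots,v_k\) whose span is contained in \(V_x\) and equals it exactly when \(\dim V_x=k\). The only difference is technical rather than conceptual: the paper first orthonormalises the frame by Gram--Schmidt, so that vectors with the same unit coefficient vector lie on \(\mathbb S^{d-1}\) in both fibres and no renormalisation or Cramer's-rule bound is needed, whereas you control the coefficients via the Gram determinant and then rescale.
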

\begin{proof}
Since \(\{V_x\}_{x\in\M}\) is a continuous distribution,
we can find a neighbourhood \(U'\) of \(\bar x\) and some continuous
maps \(v_1,\ldots,v_{k'}\colon U'\to\R^d\) such that
\(V_x={\rm span}\big\{v_1(x),\ldots,v_{k'}(x)\big\}\) for all \(x\in U'\).
Up to relabelling, we can assume that \(v_1(\bar x),\ldots,v_k(\bar x)\)
constitute a basis of \(V_{\bar x}\). Then there is a neighbourhood
\(U\subseteq U'\) of \(\bar x\) such that \(v_1(x),\ldots,v_k(x)\)
are linearly independent for all \(x\in U\).
Define \(W_x\coloneqq{\rm span}\big\{v_1(x),\ldots,v_k(x)\big\}\)
for every point \(x\in U\). Let us apply a Gram--Schmidt orthogonalisation
process to the vector fields \(v_1,\ldots,v_k\), with respect to the Euclidean
norm \(|\cdot|\):
\[\begin{split}
w_1(x)&\coloneqq\frac{v_1(x)}{\big|v_1(x)\big|},\\
w_2(x)&\coloneqq\frac{v_2(x)-\big(v_2(x)\cdot w_1(x)\big)w_1(x)}
{\big|v_2(x)-\big(v_2(x)\cdot w_1(x)\big)w_1(x)\big|},\\
&\vdots\\
w_k(x)&\coloneqq\frac{v_k(x)-\sum_{i=1}^{k-1}\big(v_k(x)\cdot w_i(x)\big)w_i(x)}
{\big|v_k(x)-\sum_{i=1}^{k-1}\big(v_k(x)\cdot w_i(x)\big)w_i(x)\big|}
\end{split}\]
for every \(x\in U\). Therefore, the resulting continuous maps
\(w_1,\ldots,w_k\colon U\to\R^d\) satisfy
\[\begin{split}
w_i(x)\cdot w_j(x)=\delta_{ij}&\quad\text{ for every }i,j=1,\ldots,k\text{ and }x\in U,\\
W_x={\rm span}\big\{w_1(x),\ldots,w_k(x)\big\}&\quad\text{ for every }x\in U.
\end{split}\]
Fix any \(C>0\) such that \(\|v\|\leq C\,|v|\) for all \(v\in\R^d\).
Possibly shrinking \(U\), we can assume that
\begin{equation}\label{eq:estim_w_i}
\big|w_i(\bar x)-w_i(x)\big|\leq\frac{\eps}{C\sqrt k}
\quad\text{ for every }i=1,\ldots,k\text{ and }x\in U.
\end{equation}
Since \(\big\{\sum_{i=1}^k q_i\,w_i(x)\,:
\,q=(q_1,\ldots,q_k)\in\mathbb Q^k\cap\mathbb S^{k-1}\big\}\)
is dense in \(W_x\cap\mathbb S^{d-1}\) for all \(x\in U\), we have
\[\begin{split}
\sfd_H(W_{\bar x}\cap\mathbb S^{d-1},W_x\cap\mathbb S^{d-1})
&\overset{\eqref{eq:def_Hausd_dist}}\leq
\sup_{q\in\mathbb Q^k\cap\mathbb S^{k-1}}
\bigg\|\sum_{i=1}^k q_i\big(w_i(\bar x)-w_i(x)\big)\bigg\|\\
&\overset{\phantom{\eqref{eq:def_Hausd_dist}}}\leq
C\sup_{q\in\mathbb Q^k\cap\mathbb S^{k-1}}
\bigg(\sum_{i=1}^k q_i^2\bigg)^{\nicefrac{1}{2}}
\bigg(\sum_{i=1}^k\big|w_i(\bar x)-w_i(x)\big|^2\bigg)^{\nicefrac{1}{2}}
\overset{\eqref{eq:estim_w_i}}\leq\eps
\end{split}\]
for every \(x\in U\). The statement follows by noticing
that \(W_x=V_x\) if \(x\in U\) satisfies \(\dim V_x=k\).
\end{proof}
\begin{definition}[Continuous metric]\label{def:cont_metric}
Let \(\M\) be a smooth manifold. Let \((\E,\pi)\) be a smooth vector bundle
over \(\M\). Then a \emph{continuous metric} \(\sigma\) over \(\E\) is a
continuous function \(\sigma\colon\E\to[0,+\infty)\) such that \(\sigma|_{\E_x}\)
is a norm on the fiber \(\E_x\) for every point \(x\in\M\).
\end{definition}
\begin{lemma}\label{lem:main_lemma}
Let \(\M\) be a smooth manifold, \(\rho\) a generalised metric
over \(\M\times\R^d\). Call \(V_x\coloneqq D(\rho_x)\) for every \(x\in\M\).
Fix \(\bar x\in\M\) and two constants \(\eps,\lambda>0\).
Let \(\sigma\) be a continuous metric over \(\M\times\R^d\) such that
\(\sigma(x,v)<\rho(x,v)\) for every \(x\in\M\)
and \(v\in\R^d\setminus\{0\}\). Then there exist
a smooth, strongly convex norm \(\n\) on \(\R^d\) and a neighbourhood \(U\)
of \(\bar x\) such that the following properties hold:
\begin{itemize}
\item[\(\rm i)\)] \(\big|\n(v)-\rho(\bar x,v)\big|\leq\eps\) for
every \(v\in V_{\bar x}\cap\mathbb S^{d-1}\).
\item[\(\rm ii)\)] \(\sigma(x,v)<\n(v)<\rho(x,v)\) for every \(x\in U\)
and \(v\in\R^d\setminus\{0\}\).
\item[\(\rm iii)\)] \(\dim V_{\bar x}=\min\big\{\dim V_x\,:\,x\in U\big\}\) and
\[
\n(v)\geq\lambda\quad\text{ for every }
x\in D\text{ and }v\in V_x^\perp\cap\mathbb S^{d-1},
\]
where we set \(D\coloneqq\big\{x\in U\,:\,\dim V_x=\dim V_{\bar x}\big\}\).
\end{itemize} 
\end{lemma}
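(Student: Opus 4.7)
My plan is to construct $\n$ in two fibrewise steps and then propagate the resulting inequalities from $\bar x$ to a whole neighbourhood $U$. First I build a (possibly non-smooth) norm on $\R^d$ encoding all the required inequalities at $\bar x$; then I regularise it to a smooth strongly convex norm with the same qualitative properties holding with slack; finally I extend the strict inequalities to nearby points using continuity of $\sigma$, lower semicontinuity of $\rho$, Remark \ref{rmk:dim_lsc}, and Lemma \ref{lem:distrib_cont}.

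For the fibrewise construction, note that $\rho(\bar x,\cdot)|_{V_{\bar x}}$ is a norm on $V_{\bar x}$ (hence continuous there), strictly greater than the continuous function $\sigma(\bar x,\cdot)|_{V_{\bar x}}$ off the origin. Compactness of $V_{\bar x}\cap\mathbb S^{d-1}$ then produces a uniform gap, so I can pick an intermediate norm $\|\cdot\|$ on $V_{\bar x}$ -- for instance $\|v\|\coloneqq(1-\eta)\rho(\bar x,v)$ with $\eta>0$ small -- satisfying $\sigma(\bar x,v)<\|v\|<\rho(\bar x,v)$ on $V_{\bar x}\setminus\{0\}$ and $\big|\|v\|-\rho(\bar x,v)\big|\leq\eps/2$ on $V_{\bar x}\cap\mathbb S^{d-1}$. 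Applying Lemma \ref{lem:approx_norm} with this $\|\cdot\|$, with $\|\cdot\|'\coloneqq\sigma(\bar x,\cdot)$, and with the constant of that lemma chosen to be $2\lambda$, produces a norm $\tilde\n$ on $\R^d$ extending $\|\cdot\|$, dominating $\sigma(\bar x,\cdot)$ on $\R^d\setminus\{0\}$, and bounded below by $2\lambda$ on $V_{\bar x}^\perp\cap\mathbb S^{d-1}$. Exploiting the standard density of smooth strongly convex norms in the space of norms on $\R^d$ (obtained, e.g., by taking the Minkowski functional of a smooth strictly convex symmetric body sandwiched between the unit ball of $\tilde\n$ and a small dilation of it), I then replace $\tilde\n$ by a smooth strongly convex $\n$ close enough in sup-norm on $\mathbb S^{d-1}$ that each inequality survives with a definite slack $\delta'>0$, yielding property (i) together with the uniform gaps $\n-\sigma(\bar x,\cdot)\geq\delta'$ on $\mathbb S^{d-1}$, $\rho(\bar x,\cdot)-\n\geq\delta'$ on $V_{\bar x}\cap\mathbb S^{d-1}$, and $\n\geq 3\lambda/2$ on $V_{\bar x}^\perp\cap\mathbb S^{d-1}$.

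The main obstacle is the final step, extending these pointwise inequalities at $\bar x$ to uniform inequalities on a neighbourhood $U$. Continuity of $\sigma$ on the compact sphere immediately yields $\sigma(x,v)<\n(v)$ for every $v\neq 0$ whenever $x$ lies in a suitable $U_\sigma$, by homogeneity. The delicate inequality is $\n(v)<\rho(x,v)$, because $V_x$ can be strictly larger than $V_{\bar x}$ for $x$ arbitrarily close to $\bar x$, introducing new directions in which $\rho(x,\cdot)$ is finite while $\rho(\bar x,\cdot)=+\infty$. To handle this I would split $\mathbb S^{d-1}$ into a small open neighbourhood $W$ of $V_{\bar x}\cap\mathbb S^{d-1}$ and its complement. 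On $\mathbb S^{d-1}\setminus W$ one has $\rho(\bar x,v)=+\infty$ everywhere, and a compactness plus lower-semicontinuity argument of tube-lemma type produces a neighbourhood $U_1$ of $\bar x$ on which $\rho(x,v)>\max_{\mathbb S^{d-1}}\n$ uniformly in $v\in\mathbb S^{d-1}\setminus W$. On $W$, a finite cover of $V_{\bar x}\cap\mathbb S^{d-1}$ by balls around base points $v'_i$, combined with the strict gap $\rho(\bar x,v'_i)>\n(v'_i)+\delta'$, lower semicontinuity of $\rho$ at each $(\bar x,v'_i)$, and uniform continuity of $\n$, yields a common neighbourhood $U_2$ on which $\rho(x,v)>\n(v)$ for all $v\in W$. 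Finally, condition (iii) follows after a further shrinking of $U$: Remark \ref{rmk:dim_lsc} gives the minimum-dimension clause, and for $x\in D$ Lemma \ref{lem:distrib_cont}, combined with the continuity of orthogonal complementation on the Grassmannian of $k$-planes in $\R^d$, makes $V_x^\perp\cap\mathbb S^{d-1}$ Hausdorff-close to $V_{\bar x}^\perp\cap\mathbb S^{d-1}$, so the bound $\n\geq 3\lambda/2$ on $V_{\bar x}^\perp$ transfers via continuity of $\n$ to $\n\geq\lambda$ on $V_x^\perp$.
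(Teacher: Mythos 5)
Your proposal is correct and follows essentially the same route as the paper's proof: build a norm at $\bar x$ via Lemma \ref{lem:approx_norm}, scale down to create a definite gap below $\rho(\bar x,\cdot)$ on $V_{\bar x}$, smooth to a strongly convex norm within that slack, propagate the strict inequalities to a neighbourhood by combining continuity of $\sigma$ and lower semicontinuity of $\rho$ with compactness of $\mathbb S^{d-1}$, and settle (iii) with Remark \ref{rmk:dim_lsc} and Lemma \ref{lem:distrib_cont}. The only differences (scaling before rather than after applying Lemma \ref{lem:approx_norm}, smoothing before rather than after passing to the neighbourhood, and splitting the sphere into a neighbourhood of $V_{\bar x}\cap\mathbb S^{d-1}$ and its complement instead of a single tube-lemma step for the lower semicontinuous function $(x,v)\mapsto\rho(x,v)-\n(v)$) are cosmetic.
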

\begin{proof}
We divide the proof into several steps:\\
{\color{blue}\textsc{Step 1.}} Lemma \ref{lem:approx_norm} grants the existence
of a norm \(\n'\) on \(\R^d\) such that
\begin{equation}\label{eq:main_lemma_1}\begin{split}
\n'(v)=\rho(\bar x,v)&\quad\text{ for every }v\in V_{\bar x},\\
\n'(v)>\sigma(\bar x,v)&\quad\text{ for every }v\in\R^d\setminus\{0\},\\
\n'(v)\geq\lambda+1&\quad\text{ for every }v\in V_{\bar x}^\perp\cap\mathbb S^{d-1}.
\end{split}\end{equation}
Given that \(\M\ni x\mapsto\dim V_x\) is lower semicontinuous, we can find a
neighbourhood \(U'\) of \(\bar x\) such that \(\dim V_{\bar x}\) is the minimum
of the function \(U'\ni x\mapsto \dim V_x\).\\
{\color{blue}\textsc{Step 2.}} The function
\(\mathbb S^{d-1}\ni v\mapsto\n'(v)-\sigma(\bar x,v)>0\)
is continuous by construction, thus there exists \(\eps'\in(0,\eps)\) such
that \(\sigma(\bar x,v)<\n'(v)-\eps'\) for all \(v\in\mathbb S^{d-1}\).
Choose any constant \(\delta>0\) such that
\((\lambda+1)(1-\delta)>\lambda\) and
\(\delta<\eps'/\max\big\{\n'(v)\,:\,v\in\mathbb S^{d-1}\big\}\).
Pick also \(\eps''\in(0,\eps')\) such that
\(\eps''<\delta\,\min\big\{\n'(v)\,:\,v\in\mathbb S^{d-1}\big\}\). Then it holds that
\begin{equation}\label{eq:main_lemma_2}
\sigma(\bar x,v)<\n'(v)-\eps'<\n'(v)-\eps''<\rho(\bar x,v)
\quad\text{ for every }v\in\mathbb S^{d-1}.
\end{equation}
Being \((x,v)\mapsto\rho(x,v)-\n'(v)+\eps''\) lower semicontinuous
and \((x,v)\mapsto\n'(v)-\eps'-\sigma(x,v)\) continuous, we deduce from
\eqref{eq:main_lemma_2} that there exists a neighbourhood \(U''\subseteq U'\)
of \(\bar x\) such that
\begin{equation}\label{eq:main_lemma_3}
\sigma(x,v)<\n'(v)-\eps'<\n'(v)-\eps''<\rho(x,v)
\quad\text{ for every }x\in U''\text{ and }v\in\mathbb S^{d-1}.
\end{equation}
Let us define \(\n''\coloneqq(1-\delta)\,\n'\). Our choice of \(\delta\)
and \(\eps''\) yields \(\n'(v)-\eps'<(1-\delta)\,\n'(v)<\n'(v)-\eps''\)
for every \(v\in\mathbb S^{d-1}\), which together with \eqref{eq:main_lemma_3}
imply that
\begin{equation}\label{eq:main_lemma_4}
\sigma(x,v)<\n''(v)<\rho(x,v)\quad\text{ for every }x\in U''
\text{ and }v\in\mathbb S^{d-1}.
\end{equation}
Moreover, for any \(v\in V_{\bar x}\cap\mathbb S^{d-1}\) it holds
\(\n''(v)>\n'(v)-\eps'>\rho(\bar x,v)-\eps\) by the first line
of \eqref{eq:main_lemma_1}.\\
{\color{blue}\textsc{Step 3.}} In light of \eqref{eq:main_lemma_4},
there exists a constant \(\delta'>0\) with \((\lambda+1)(1-\delta)-\delta'>\lambda\)
such that
\begin{equation}\label{eq:main_lemma_5}\begin{split}
\sigma(x,v)<\n''(v)-\delta'<\n''(v)+\delta'<\rho(x,v)&
\quad\text{ for every }x\in U''\text{ and }v\in\mathbb S^{d-1},\\
\n''(v)-\delta'>\rho(\bar x,v)-\eps&
\quad\text{ for every }v\in V_{\bar x}\cap\mathbb S^{d-1}.
\end{split}\end{equation}
Choose any smooth norm \(\|\cdot\|\) on \(\R^d\) such that
\(\big|\|v\|-\n''(v)\big|\leq\delta'/2\) holds for all \(v\in\mathbb S^{d-1}\),
whose existence follows, e.g., from
\cite[Theorem 103]{HM14}.
Then let us finally define the sought norm \(\n\) as
\(\n(v)\coloneqq\|v\|+\delta'|v|/2\) for every \(v\in\R^d\).
Clearly, it is a smooth and strongly convex norm by construction.
Moreover, it can be immediately checked that \(\n\) satisfies
\begin{equation}\label{eq:main_lemma_6}
\big|\n(v)-\n''(v)\big|\leq\delta'\quad\text{ for every }v\in\mathbb S^{d-1}.
\end{equation}
Accordingly, by combining \eqref{eq:main_lemma_5} with \eqref{eq:main_lemma_6}
we obtain that
\begin{equation}\label{eq:main_lemma_7}\begin{split}
\sigma(x,v)<\n(v)<\rho(x,v)&
\quad\text{ for every }x\in U''\text{ and }v\in\mathbb S^{d-1},\\
\n(v)>\rho(\bar x,v)-\eps&\quad\text{ for every }v\in V_{\bar x}\cap\mathbb S^{d-1}.
\end{split}\end{equation}
{\color{blue}\textsc{Step 4.}} Observe that \eqref{eq:main_lemma_6} and the
third line of \eqref{eq:main_lemma_1} give
\begin{equation}\label{eq:main_lemma_8}
\n(v)\geq(1-\delta)\,\n'(v)-\delta'\geq(\lambda+1)(1-\delta)-\delta'
\quad\text{ for every }v\in V_{\bar x}^\perp\cap\mathbb S^{d-1}.
\end{equation}
Since \(\M\ni x\mapsto V_x\) is a continuous distribution and
\((\lambda+1)(1-\delta)-\delta'>\lambda\), we deduce from \eqref{eq:main_lemma_8}
that for some neighbourhood \(U\subseteq U''\) of \(\bar x\) we have
\[
\n(v)\geq\lambda\quad\text{ for every }x\in U\text{ with }\dim V_x=\dim V_{\bar x}
\text{ and }v\in V_x^\perp\cap\mathbb S^{d-1}.
\]
Therefore, item iii) is verified (recall the last claim in \textsc{Step 1}).
Finally, we deduce from \eqref{eq:main_lemma_7} that also items i) and ii) hold,
thus concluding the proof of the statement.
\end{proof}
\subsection{The approximation result}
Let \(\M\) be a smooth manifold and let \(\rho\) be a generalised metric
over \(\M\times\R^d\). Calling \(V_x\coloneqq D(\rho_x)\) for every \(x\in\M\),
it holds that \(\M\ni x\mapsto\dim V_x\) is a lower semicontinuous function
(recall Remark \ref{rmk:dim_lsc}), thus for any \(x\in\M\) there exists \(r_x>0\)
such that
\[
\dim V_x=\min\big\{\dim V_y\;\big|\;y\in B_{r_x}(x)\big\}.
\]
Let us define
\begin{equation}\label{eq:def_G_n}
G_n\coloneqq\big\{x\in\M\;\big|\;r_x\geq 1/n\big\}\quad\text{ for every }n\in\N.
\end{equation}
Observe that for any point \(x\in\M\) there exists \(\bar n\in\N\) such that
\(x\in\bigcap_{n\geq\bar n}G_n\).
\begin{proposition}\label{prop:constr_g_n}
Let \(\M\) be a smooth manifold. Let \(\sfd\) be any distance on \(\M\)
that induces the manifold topology. Fix a generalised metric
\(\rho\) over \(\M\times\R^d\). Then there exists a sequence \((F_n)_n\) of Finsler
metrics over \(\M\times\R^d\) such that the following properties are satisfied:
\begin{itemize}
\item[\(\rm a)\)] \(F_{n-1}(x,v)<F_n(x,v)<\rho(x,v)\) for every \(n\in\N\),
\(x\in\M\), and \(v\in\R^d\setminus\{0\}\).
\item[\(\rm b)\)] Given any \(n\in\N\), it holds that
\[
F_n(x,v)\geq n\quad\text{ for every }x\in G_n
\text{ and }v\in V_x^\perp\cap\mathbb S^{d-1},
\]
where \(V_x\coloneqq D(\rho_x)\) for all \(x\in\M\) and the set \(G_n\) 
is defined as in \eqref{eq:def_G_n}.
\item[\(\rm c)\)] For any \(n\in\N\) there exists a countable
set \(S_n\subseteq\M\) such that
\[
\big|F_n(z,v)-\rho(z,v)\big|\leq\frac{1}{n}\quad\text{ for every }
z\in S_n\text{ and }v\in V_z\cap\mathbb S^{d-1}.
\]
\item[\(\rm d)\)] Given any \(n\in\N\), \(x\in G_n\), and \(v\in\R^d\),
there exists a point \(z\in S_n\cap B_{1/n}(x)\) such that
\[
F_n(x,v)\geq F_n(z,v)\quad\text{ and }\quad
\sfd_H(V_z\cap\mathbb S^{d-1},V_x\cap\mathbb S^{d-1})<\frac{1}{n},
\]
where the Hausdorff distance \(\sfd_H\) is computed with respect to
the norm \(F_n(z,\cdot)+|\cdot|\).
\end{itemize}
\end{proposition}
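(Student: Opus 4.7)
The plan is to construct the Finsler metrics $F_n$ inductively on $n$, applying Lemma \ref{lem:main_lemma} at each point $y\in\M$ to produce a smooth strongly convex norm $\n_y$ on $\R^d$, and then gluing these local norms into a single Finsler metric $F_n$ via a smooth partition of unity subordinate to a carefully chosen countable cover. The base case $F_0$, a Finsler metric with $F_0(x,v)<\rho(x,v)$ for every $v\neq 0$, can be produced by an entirely analogous partition-of-unity argument starting from local norms of the form $v\mapsto\eps_y|v|$ for sufficiently small $\eps_y>0$ (whose existence is granted by lower semicontinuity of $\rho$ together with compactness of $\mathbb S^{d-1}$).

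For the inductive step, suppose $F_{n-1}$ has been constructed. For every $y\in\M$ I apply Lemma \ref{lem:main_lemma} with $\bar x=y$, $\sigma=F_{n-1}$, $\lambda=n$, and $\eps=1/n$, obtaining a smooth strongly convex norm $\n_y$ on $\R^d$ and an open neighborhood $U_y$ of $y$ satisfying three conditions: (i) $|\n_y(v)-\rho(y,v)|\leq 1/n$ for every $v\in V_y\cap\mathbb S^{d-1}$; (ii) $F_{n-1}(z,v)<\n_y(v)<\rho(z,v)$ for every $z\in U_y$ and every $v\in\R^d\setminus\{0\}$; (iii) $\dim V_y=\min\{\dim V_z:z\in U_y\}$, and $\n_y(v)\geq n$ whenever $z\in U_y$ has $\dim V_z=\dim V_y$ and $v\in V_z^\perp\cap\mathbb S^{d-1}$. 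By further shrinking $U_y$, using Lemma \ref{lem:distrib_cont} applied with the auxiliary norm $\n_y(\cdot)+|\cdot|$, I additionally enforce $U_y\subseteq B_{1/(2n)}(y)$ and $\sfd_H(V_y\cap\mathbb S^{d-1},V_z\cap\mathbb S^{d-1})<1/n$ (with respect to $\n_y+|\cdot|$) for every $z\in U_y$ with $\dim V_z=\dim V_y$.

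By paracompactness I extract a countable locally finite subcover $\{U_{y_i}\}_{i\in I}$ of $\{U_y\}_{y\in\M}$. Since a locally finite family of points in a Hausdorff space is closed, the open sets $V_i:=U_{y_i}\setminus\{y_j:j\neq i\}$ still cover $\M$, contain $y_i$, and satisfy $y_j\notin V_i$ for every $j\neq i$. Choose smooth nonnegative bump functions $\varphi_i$ with $\mathrm{supp}(\varphi_i)\subseteq V_i$, $\varphi_i(y_i)=1$, and $\sum_j\varphi_j>0$ everywhere (the latter via a standard shrinking of the cover). Then $\phi_i:=\varphi_i/\sum_j\varphi_j$ is a smooth partition of unity satisfying the crucial identity $\phi_j(y_i)=\delta_{ij}$, and I define
\[
F_n(z,v):=\Bigl(\sum_{i\in I}\phi_i(z)\,\n_{y_i}(v)^2\Bigr)^{1/2},\qquad S_n:=\{y_i:i\in I\}.
\]
Using the $\ell^2$-type combination rather than a plain convex combination guarantees that $F_n(z,\cdot)^2$ has positive definite Hessian at every $v\neq 0$, as a convex combination of the positive definite Hessians of the $\n_{y_i}^2$; hence $F_n(z,\cdot)$ is a smooth strongly convex norm on $\R^d$ and $F_n$ is a Finsler metric.

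The verification of a)--d) then falls out in order. Property a) follows from (ii) by monotonicity of the $\ell^2$-combination (with the usual conventions when $\rho(z,v)=+\infty$). For b), if $x\in G_n$ and $\phi_i(x)>0$, then $x\in U_{y_i}\subseteq B_{1/(2n)}(y_i)$ gives $y_i\in B_{1/n}(x)$, so $\dim V_{y_i}\geq\dim V_x$ by the definition of $G_n$, while (iii) gives $\dim V_{y_i}\leq\dim V_x$; hence $\dim V_{y_i}=\dim V_x$, and (iii) at $\bar x=y_i$ yields $\n_{y_i}(v)\geq n$ for every $v\in V_x^\perp\cap\mathbb S^{d-1}$, so $F_n(x,v)\geq n$. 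Property c) is immediate from the identity $F_n(y_i,\cdot)=\n_{y_i}(\cdot)$ (consequence of $\phi_j(y_i)=\delta_{ij}$) combined with (i). For d), given $x\in G_n$ and $v\in\R^d$, I pick $i_0$ minimizing $\n_{y_i}(v)$ over the finite set $\{i:\phi_i(x)>0\}$; then $F_n(x,v)\geq\n_{y_{i_0}}(v)=F_n(y_{i_0},v)$, $y_{i_0}\in B_{1/n}(x)$, and the Hausdorff estimate is guaranteed by the extra shrinking since $x\in U_{y_{i_0}}$ and $\dim V_x=\dim V_{y_{i_0}}$ as shown above. The main obstacle is engineering the partition of unity to satisfy $\phi_j(y_i)=\delta_{ij}$: this is what makes $F_n$ reduce to the local norm $\n_{y_i}$ at each center and thereby makes property c) hold at the countable set $S_n$; the trick of removing from each $U_{y_i}$ the discrete closed set of other centers (to form $V_i$) is the key to enabling this.
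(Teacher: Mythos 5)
Your proof is correct and follows essentially the same strategy as the paper: apply Lemma \ref{lem:main_lemma} (together with Lemma \ref{lem:distrib_cont}) at each point, pass to a countable locally finite cover with distinguished centers, glue via a partition of unity that equals the Kronecker delta at those centers, and verify a)--d) exactly as you do. The only deviations are cosmetic -- the paper glues by a plain convex combination $\sum_i\varphi^n_i(x)\,\n^n_i(v)$ and sets $F_0\coloneqq 0$, whereas you use an $\ell^2$-type combination (which indeed makes strong convexity more transparent) and build a genuine Finsler $F_0$; your explicit arrangement of $\phi_j(y_i)=\delta_{ij}$ by deleting the other centers is a careful spelling-out of the paper's requirement $\varphi^n_i(z^n_i)=1$.
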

\begin{proof}
We recursively define the Finsler metrics \(F_n\colon\M\times\R^d\to[0,+\infty)\).
Suppose to have already defined \(F_0,\ldots,F_{n-1}\) for some \(n\in\N\),
where \(F_0\coloneqq 0\). By using Lemma \ref{lem:distrib_cont}, Lemma 
\ref{lem:main_lemma}, and the paracompactness of \(\M\), we can find a family
\(\big\{(U^n_i,z^n_i,\n^n_i)\,:\,i\in\N\big\}\) such that:
\begin{itemize}
\item[\(\rm i)\)] \(\{U^n_i\}_{i\in\N}\) is a locally finite, open covering
of \(\M\), and \({\rm diam}(U^n_i)<1/n\) for every \(i\in\N\).
\item[\(\rm ii)\)] \(z^n_i\in U^n_i\) and
\(\dim V_{z^n_i}=\min\{\dim V_x\,:\,x\in U^n_i\}\) for every \(i\in\N\).
\item[\(\rm iii)\)] Given any \(i\in\N\), we have that \(\n^n_i\) is a smooth,
strongly convex norm on \(\R^d\) that satisfies
\[
F_{n-1}(x,v)<\n^n_i(v)<\rho(x,v)\quad\text{ for every }x\in U^n_i
\text{ and }v\in\R^d\setminus\{0\}.
\]
\item[\(\rm iv)\)] \(\big|\n^n_i(v)-\rho(z^n_i,v)\big|\leq 1/n\) for
every \(i\in\N\) and \(v\in V_{z^n_i}\cap\mathbb S^{d-1}\).
\item[\(\rm v)\)] Calling
\(D^n_i\coloneqq\big\{x\in U^n_i\,:\,\dim V_x=\dim V_{z^n_i}\big\}\)
for all \(i\in\N\), we have that \(\n^n_i(v)\geq n\) for every \(x\in D^n_i\)
and \(v\in V_x^\perp\cap\mathbb S^{d-1}\).
\item[\(\rm vi)\)] Given any \(i\in\N\), it holds that
\[
\sfd_H(V_{z^n_i}\cap\mathbb S^{d-1},V_x\cap\mathbb S^{d-1})<\frac{1}{n}
\quad\text{ for every }x\in D^n_i,
\]
where \(\sfd_H\) is intended with respect to \(\n^n_i+|\cdot|\).
\end{itemize}
Choose a partition of unity \(\{\varphi^n_i\,:\,i\in\N\}\subseteq C^\infty(\M)\)
subordinated to \(\{U^n_i\,:\,i\in\N\}\) such that \(\varphi^n_i(z^n_i)=1\)
for every \(i\in\N\). Let us define
\[
F_n(x,v)\coloneqq\sum_{i\in\N}\varphi^n_i(x)\,\n^n_i(v)
\quad\text{ for every }x\in\M\text{ and }v\in\R^d.
\]
Since each norm \(\n^n_i\) is smooth and strongly convex,
it can be readily checked that \(F_n\) is a Finsler metric over \(\M\times\R^d\).
Let us then conclude by verifying that \(F_n\) satisfies the desired properties:\\
{\color{blue}a)} It follows from iii) that \(F_{n-1}(x,v)<F_n(x,v)<\rho(x,v)\)
for all \(x\in\M\) and \(v\in\R^d\setminus\{0\}\).\\
{\color{blue}b)} Fix any point \(x\in G_n\). We claim that for any \(i\in\N\)
with \(x\in U^n_i\), it holds that \(x\in D^n_i\). Indeed, we know that \(r_x\geq 1/n\)
by definition of \(G_n\), whence it holds \(U^n_i\subseteq B_{r_x}(x)\) by i) and
accordingly \(\dim V_x=\dim V_{z^n_i}\) by ii). This shows that \(x\in D^n_i\),
thus proving the above claim.

Fix \(v\in V_x^\perp\cap\mathbb S^{d-1}\).
Therefore, we deduce from the previous claim and v) that
\[
\sum_{i\in\N}\varphi^n_i(x)=
\sum_{\substack{i\in\N: \\ x\in D^n_i}}\varphi^n_i(x)=1\quad\text{ and }\quad
F_n(x,v)=\sum_{\substack{i\in\N: \\ x\in D^n_i}}\varphi^n_i(x)\,\n^n_i(v)\geq n.
\]
{\color{blue}c)} Define \(S_n\coloneqq\{z^n_i\,:\,i\in\N\}\).
Notice that \(F_n(z^n_i,\cdot)=\n^n_i\) for any \(i\in\N\), whence
iv) gives c).\\
{\color{blue}d)} Fix \(n\in\N\), \(x\in G_n\), and \(v\in\R^d\).
Since the family \(\{i\in\N\,:\,x\in U^n_i\}\) is finite, we can
find \(j\in\N\) such that \(x\in U^n_j\) and 
\(F_n(z^n_j,v)=\min\big\{F_n(z^n_i,v)\,:\,i\in\N,\,x\in U^n_i\big\}\).
Consequently,
\[
F_n(x,v)=\sum_{\substack{i\in\N: \\ x\in U^n_i}}\varphi^n_i(x)\,\n^n_i(v)
=\sum_{\substack{i\in\N: \\ x\in U^n_i}}\varphi^n_i(x)\,F_n(z^n_i,v)
\geq F_n(z^n_j,v).
\]
Moreover, as in the proof of item b) we deduce that \(x\in D^n_j\).
Therefore, we know from item vi) that
\(\sfd_H(V_{z^n_j}\cap\mathbb S^{d-1},V_x\cap\mathbb S^{d-1})<1/n\),
where \(\sfd_H\) is taken with respect to \(\n^n_j+|\cdot|=F_n(z^n_j,\cdot)+|\cdot|\).
Notice also that \(z^n_j\in B_{1/n}(x)\), as \({\rm diam}(U^n_j)<1/n\) by i).
This gives the statement.
\end{proof}
\begin{lemma}\label{lem:metrics_conv}
Let \(\M\), \(\rho\), and \((F_n)_n\) be as in Proposition
\ref{prop:constr_g_n}. Then it holds that \(F_n(x,v)\nearrow\rho(x,v)\)
for every \(x\in\M\) and \(v\in\R^d\).
\end{lemma}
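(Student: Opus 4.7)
The plan is to exploit the four properties a)--d) packaged in Proposition \ref{prop:constr_g_n}. Monotonicity and the upper bound $F_n(x,v) \le \rho(x,v)$ are immediate from a), so the sequence $\bigl(F_n(x,v)\bigr)_n$ converges to some limit $L(x,v) \in [0,\rho(x,v)]$, and the task is to prove $L(x,v) \ge \rho(x,v)$. I would split the argument according to whether $v$ lies in the finiteness domain $V_x = D(\rho_x)$ or not, with the trivial case $v=0$ set aside.

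For the \emph{horizontal case} $v \in V_x \setminus \{0\}$, by homogeneity I may normalise and assume $v \in V_x \cap \mathbb{S}^{d-1}$. Since $x$ eventually belongs to $G_n$, for every sufficiently large $n$ I apply d) to obtain $z = z_n \in S_n \cap B_{1/n}(x)$ with $F_n(x,v) \ge F_n(z,v)$ and with Hausdorff distance between $V_z \cap \mathbb{S}^{d-1}$ and $V_x \cap \mathbb{S}^{d-1}$ (in the norm $F_n(z,\cdot) + |\cdot|$) strictly less than $1/n$. From the Hausdorff bound I extract some $w = w_n \in V_z \cap \mathbb{S}^{d-1}$ with
\[
F_n(z, v - w) + |v - w| < \frac{1}{n},
\]
and combine the reverse triangle inequality with item c) to estimate
\[
F_n(x,v) \;\ge\; F_n(z,v) \;\ge\; F_n(z,w) - \tfrac{1}{n} \;\ge\; \rho(z,w) - \tfrac{2}{n}.
\]
Since $z_n \to x$ (as $\sfd$ metrises the manifold topology) and $w_n \to v$ in $\R^d$, the lower semicontinuity of $\rho$ yields $\liminf_n \rho(z_n, w_n) \ge \rho(x,v)$, hence $L(x,v) \ge \rho(x,v)$, as desired.

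For the \emph{non-horizontal case} $v \notin V_x$, I must instead show $F_n(x,v) \to +\infty = \rho(x,v)$. Decompose $v = v_\parallel + v_\perp$ with respect to the Euclidean inner product, where $v_\parallel \in V_x$ and $v_\perp \in V_x^\perp \setminus \{0\}$. For $n$ large enough so that $x \in G_n$, property b) applied to the unit vector $v_\perp / |v_\perp|$ gives $F_n(x, v_\perp) \ge n |v_\perp|$, while $F_n(x, v_\parallel)$ stays bounded by $\rho(x, v_\parallel) < +\infty$ thanks to a); the triangle inequality in the norm $F_n(x, \cdot)$ therefore produces
\[
F_n(x, v) \;\ge\; F_n(x, v_\perp) - F_n(x, v_\parallel) \;\ge\; n |v_\perp| - \rho(x, v_\parallel) \;\longrightarrow\; +\infty.
\]
The only slightly delicate point is the horizontal case, where one has to match $v \in V_x$ with a nearby vector $w \in V_z$ at a nearby base point $z$, and this is precisely what property d) (together with lower semicontinuity of $\rho$) is tailored to supply; once this matching is available, everything reduces to taking liminfs.
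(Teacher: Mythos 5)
Your proposal is correct and follows essentially the same route as the paper: both cases use exactly the ingredients a)--d) in the same way (item d) to find the nearby base point \(z\) and matching vector \(w\in V_z\cap\mathbb S^{d-1}\), item c) for the lower bound \(F_n(z,w)\ge\rho(z,w)-1/n\), lower semicontinuity of \(\rho\) to pass to the limit, and item b) for the blow-up in the non-horizontal case). The only difference is cosmetic: the paper runs the horizontal case by contradiction with a fixed gap \(t>0\), whereas you take a direct \(\liminf\); both are valid.
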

\begin{proof}
It clearly suffices to prove that \(F_n(x,v)\nearrow\rho(x,v)\) for any fixed
\(x\in\M\) and \(v\in\mathbb S^{d-1}\).\\
{\color{blue}\textsc{Case 1.}} Assume \(v\in V_x\coloneqq D(\rho_x)\).
We argue by contradiction: suppose there is \(t>0\) such that
\begin{equation}\label{eq:approx_subFinsl_1}
F_n(x,v)\leq\rho(x,v)-t\quad\text{ for every }n\in\N.
\end{equation}
Fix any distance \(\sfd\) on \(\M\) that induces the manifold topology.
Since the function \(\rho\) is lower semicontinuous by definition, we can
find \(r>0\) such that
\begin{equation}\label{eq:approx_subFinsl_2}
\rho(y,w)\geq\rho(x,v)-\frac{t}{2}\quad\text{ for every }(y,w)\in\M\times\R^d
\text{ with }\sfd(x,y),|v-w|<r.
\end{equation}
Choose any \(\bar n\in\N\) such that \(1/\bar n<\min\{r,t/4\}\)
and \(x\in\bigcap_{n\geq\bar n}G_n\), with \(G_n\) defined as in \eqref{eq:def_G_n}.
Fix any \(n\geq\bar n\). Item d) of Proposition \ref{prop:constr_g_n} grants the existence
of a point \(z\in S_n\cap B_{1/n}(x)\) and a vector
\(w_z\in V_z\cap\mathbb S^{d-1}\) such that \(F_n(x,v)\geq F_n(z,v)\)
and \(F_n(z,v-w_z)+|v-w_z|\leq 1/n\). Moreover, item c) of Proposition
\ref{prop:constr_g_n} ensures that \(F_n(z,w_z)\geq\rho(z,w_z)-1/n\).
All in all, we conclude that
\[
F_n(x,v)\geq F_n(z,v)\geq F_n(z,w_z)-\frac{1}{n}\geq\rho(z,w_z)-\frac{2}{n}
\overset{\eqref{eq:approx_subFinsl_2}}\geq\rho(x,v)-\frac{t}{2}-\frac{2}{n}
>\rho(x,v)-t,
\]
which is in contradiction with \eqref{eq:approx_subFinsl_1}. This proves
that \(F_n(x,v)\nearrow\rho(x,v)\) in the case \(v\in V_x\).\\
{\color{blue}\textsc{Case 2.}} Assume \(v\notin V_x\). Choose those elements
\(v'\in V_x\), \(w\in V_x^\perp\cap\mathbb S^{d-1}\), and \(\beta>0\)
for which \(v=v'+\beta\,w\). Fix any \(\bar n\in\N\) with
\(x\in\bigcap_{n\geq\bar n}G_n\). Then item b) of Proposition \ref{prop:constr_g_n}
yields \(F_n(x,w)\geq n\) for all \(n\geq\bar n\). Taking into account
item a) of Proposition \ref{prop:constr_g_n}, this gives
\[
F_n(x,v)\geq\beta\,F_n(x,w)-F_n(x,v')\geq\beta n-\rho(x,v')
\overset{n}\longrightarrow +\infty=\rho(x,v).
\]
Therefore, the statement is proven.
\end{proof}
\begin{theorem}[Approximation of generalised metrics]\label{thm:approx_subFinsl}
Let \(\M\) be a smooth manifold. Let \(\rho\) be a generalised metric on \(\M\).
Then there exists a sequence \((F_n)_n\) of Finsler metrics on \(\M\) such that
\begin{equation}\label{eq:conv_metrics_below}
F_n(x,v)\nearrow\rho(x,v)\quad\text{ for every }x\in\M\text{ and }v\in\T_x\M.
\end{equation}
\end{theorem}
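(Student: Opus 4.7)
The plan is to reduce the statement to the trivial-bundle version already at our disposal: combining Proposition~\ref{prop:constr_g_n} with Lemma~\ref{lem:metrics_conv} gives the conclusion in the special case $\E = \M \times \R^d$. To perform the reduction I would invoke a Whitney-type embedding of the tangent bundle, namely a smooth injective bundle morphism $\iota \colon \T\M \hookrightarrow \M \times \R^N$ (for some $N$) whose image is a closed smooth subbundle; one obtains it, for instance, by embedding $\M$ in $\R^N$ as a submanifold and taking the differential. Write $W_x \coloneqq \iota_x(\T_x\M) \leq \R^N$ for the image of the fiber at $x$.

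I would then push $\rho$ forward to a generalised metric $\tilde\rho \colon \M \times \R^N \to [0,+\infty]$ on the trivial bundle, defined by $\tilde\rho(x,v) \coloneqq \rho(x, \iota_x^{-1} v)$ whenever $v \in W_x$ and $\tilde\rho(x,v) \coloneqq +\infty$ otherwise. The key verification is that $\tilde\rho$ satisfies the axioms of Definition~\ref{def:gen_metric}. Lower semicontinuity is the most delicate point: if $(x_n, v_n) \to (x, v)$ in $\M \times \R^N$ with $\tilde\rho(x_n, v_n)$ uniformly finite, then $v_n \in W_{x_n}$ for all $n$, so closedness of the embedded total space $\bigsqcup_{x \in \M} \{x\} \times W_x$ in $\M \times \R^N$ forces $v \in W_x$ and, by continuity of the fibrewise inverse $\iota^{-1}$ on its image, $\iota_{x_n}^{-1} v_n \to \iota_x^{-1} v$ in $\T\M$; the lower semicontinuity of $\rho$ then yields the desired inequality. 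If instead $v \notin W_x$, the same closedness forces $\tilde\rho(x_n, v_n) = +\infty$ eventually. The generalised-norm axiom on each fiber is immediate, and the continuity of the distribution $\{\iota_x(D(\rho_x))\}_{x \in \M}$ follows from characterisation (i) of Theorem~\ref{thm:rank_vary_distr}, since any continuous section of $\{D(\rho_x)\}$ in $\T\M$ pushes forward through the smooth morphism $\iota$ to a continuous section in $\M \times \R^N$.

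Once $\tilde\rho$ is known to be a generalised metric, I would apply Proposition~\ref{prop:constr_g_n} and Lemma~\ref{lem:metrics_conv} to produce a sequence $(\tilde F_n)_n$ of Finsler metrics on $\M \times \R^N$ with $\tilde F_n \nearrow \tilde\rho$ pointwise. Setting $F_n(x,v) \coloneqq \tilde F_n(x, \iota_x v)$ for $(x,v) \in \T\M$ then furnishes the desired Finsler metrics on $\M$: continuity and smoothness off the zero section are inherited via the smooth embedding, while positive definiteness of the Hessian of $\tilde F_n(x,\cdot)^2$ on $\R^N$ descends to the subspace $W_x$ and gives the fibrewise strong convexity. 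The monotone convergence $F_n(x,v) \nearrow \rho(x,v)$ is then immediate from the definitions. The main obstacle I foresee is really just the lower semicontinuity verification for the extension $\tilde\rho$; everything after that step is formal bookkeeping using the already-established trivial-bundle machinery.
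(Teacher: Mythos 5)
Your argument is correct, but it globalises the trivial-bundle machinery (Proposition~\ref{prop:constr_g_n} plus Lemma~\ref{lem:metrics_conv}) by a genuinely different route than the paper. The paper stays local: it covers \(\M\) by a locally finite family \((\M_i)_i\) over which \(\T\M\) trivialises as \(\M_i\times\R^d\) with \(d=\dim\M\), runs the trivial-bundle construction on each piece, and glues the resulting Finsler metrics with a partition of unity \((\varphi_i)_i\), using that a locally finite convex combination of smooth strongly convex norms, each squeezed strictly between \(F_{n-1}\) and \(\rho\), is again a Finsler metric increasing to \(\rho\). You instead embed \(\T\M\) globally as a closed smooth subbundle \(W\) of \(\M\times\R^N\) via a Whitney embedding, extend \(\rho\) by \(+\infty\) off \(W\), and pull the approximating metrics back. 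Your approach trades the gluing step for two verifications that you correctly identify and handle: that the extension \(\tilde\rho\) is still lower semicontinuous (which works because \(W\) is closed in \(\M\times\R^N\) and the fibrewise inverse of the embedding is continuous on \(W\)) and that its finiteness domains still form a continuous distribution (immediate from characterisation ii) of Theorem~\ref{thm:rank_vary_distr}); the restriction of a strongly convex smooth norm to a subspace is again strongly convex, so the pullback \(F_n(x,v)=\tilde F_n(x,\iota_x v)\) is indeed a Finsler metric. What the paper's route buys is independence from the Whitney embedding theorem and a uniform fiber dimension \(d=\dim\M\); what yours buys is a single global application of the trivial-bundle result and no need to check that partitions of unity preserve monotonicity and the strict inequalities in item a) of Proposition~\ref{prop:constr_g_n}. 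Both are sound.
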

\begin{proof}
Let us denote \(d\coloneqq\dim\M\). Since \(\M\) is paracompact (and second
countable), we can find a locally finite, open covering \((\M_i)_{i\in\N}\)
of \(\M\) such that the tangent bundle \(\T\M\) admits a local trivialisation
\(\psi_i\colon\T\M_i\to\M_i\times\R^d\) for every \(i\in\N\). Fix any
partition of unity \((\varphi_i)_i\subseteq C^\infty(\M)\) subordinated to
\((\M_i)_i\). Given any \(i\in\N\), we can apply Proposition \ref{prop:constr_g_n}
and Lemma \ref{lem:metrics_conv} to obtain a sequence \((\tilde F^i_n)_n\)
of Finsler metrics over \(\M_i\times\R^d\) such that
\(\tilde F^i_n(x,\tilde v)\nearrow(\rho\circ\psi_i^{-1})(x,\tilde v)\)
as \(n\to\infty\) for every \(x\in\M_i\) and \(\tilde v\in\R^d\).
Therefore, let us define \(F_n\colon\T\M\to[0,+\infty)\) as
\[
F_n(x,v)\coloneqq\sum_{i\in\N}\varphi_i(x)\,(\tilde F^i_n\circ\psi_i)(x,v)
\quad\text{ for every }x\in\M\text{ and }v\in\T_x\M.
\]
It can be readily checked that \((F_n)_n\) is a sequence of Finsler
metrics on \(\M\) satisfying \eqref{eq:conv_metrics_below}.
\end{proof}
\begin{remark}\label{rmk:approx_subRiem}{\rm
Under some additional assumptions, we can actually improve the statement of Theorem
\ref{thm:approx_subFinsl}: if we further suppose that \(\rho_x|_{D(\rho_x)}\)
is a Hilbert norm for every \(x\in\M\), then there exists a sequence \((g_n)_n\)
of Riemannian metrics on \(\M\) such that
\[
\sqrt{(g_n)_x(v,v)}\nearrow\rho(x,v)\quad\text{ for every }x\in\M
\text{ and }v\in\T_x\M.
\]
This fact can be proven by slightly modifying (actually, simplifying)
the arguments we discussed in the present section. More precisely,
it is sufficient to notice that in this case the norm \(\n\) defined in
\eqref{eq:def_n} is induced by a scalar product, and to omit
\textsc{Step 3} from the proof of Lemma \ref{lem:main_lemma}
(just defining \(\n\coloneqq\n''\)). Another proof of this
result can be found in \cite[Proof of Corollary 1.5]{LD10}.
\fr}\end{remark}
\section{Sub-Finsler manifolds}
\subsection{Definitions and main properties}
In this subsection we recall the notion of sub-Finsler manifold
and its main properties. The following material is taken
from \cite[Section 2.3]{LeDonne_lecture}.
\medskip

Given a smooth manifold \(\M\), we denote by \({\rm Vec}(\M)\)
the space of all smooth vector fields on \(\M\). Moreover, we define the map
\({\sf Der}\colon C\big([0,1],\M\big)\times[0,1]\to\T\M\) as
\begin{equation}\label{eq:Der}
{\sf Der}(\gamma,t)\coloneqq\left\{\begin{array}{ll}
(\gamma_t,\dot\gamma_t)\\
(\gamma_t,0)
\end{array}\quad\begin{array}{ll}
\text{ if }\dot\gamma_t\in\T_{\gamma_t}\M\text{ exists,}\\
\text{ otherwise.}
\end{array}\right.
\end{equation}
It is well-known that \(\sf Der\) is a Borel map.
For any \(v,w\in{\rm Vec}(\M)\), we denote by
\([v,w]\in{\rm Vec}(\M)\) the Lie brackets of \(v\) and \(w\).
Given any subset \(\mathcal F\) of \({\rm Vec}(\M)\),
we define the space \({\rm Lie}(\mathcal F)\subseteq{\rm Vec}(\M)\) as the Lie algebra
generated by the family \(\mathcal F\), i.e.,
\[{\rm Lie}(\mathcal F)\coloneqq
{\rm span}\Big\{[v_1,\ldots,[v_{j-1},v_j]\ldots]\;\Big|\;
j\in\N,\;v_1,\ldots,v_j\in\mathcal F\Big\}.\]
We set \({\rm Lie}_x(\mathcal F)\coloneqq\big\{v(x)\,:
\,v\in{\rm Lie}(\mathcal F)\big\}\leq\T_x\M$ for every \(x\in\M\).
We say that the family \(\mathcal F\) satisfies the \emph{H\"{o}rmander condition}
provided \({\rm Lie}_x(\mathcal F)=\T_x\M\) holds for every \(x\in\M\).
\begin{definition}[Sub-Finsler manifold]\label{def:sF_manifold}
Let \(\M\) be a smooth manifold. Then a triple \((\E,\sigma,\psi)\) is said to be
a \emph{sub-Finsler structure} on \(\M\) provided the following properties hold:
\begin{itemize}
\item[\(\rm i)\)] \(\E\) is a smooth vector bundle over \(\M\),
\item[\(\rm ii)\)] \(\sigma\) is a continuous metric over \(\E\),
\item[\(\rm iii)\)] \(\psi\colon\E\to\T\M\) is a morphism of smooth
vector bundles such that the family \(\mathcal D\) of
\emph{smooth horizontal vector fields} on \(\M\), which is defined as
\[
\mathcal D\coloneqq\big\{\psi\circ u\;\big|\;
u\text{ is a smooth section of }\E\big\}\subseteq{\rm Vec}(\M),
\]
satisfies the H\"{o}rmander condition.
\end{itemize}
The quadruple \((\M,\E,\sigma,\psi)\) is said to be a \emph{generalised sub-Finsler
manifold} (or just a \emph{sub-Finsler manifold}, for brevity).
If \((\E_x,\sigma|_{\E_x})\) is a Hilbert space
for every \(x\in\M\) and the family of squared norms \((\sigma|_{\E_x})^2\)
smoothly depends on \(x\), then \((\M,\E,\sigma,\psi)\) is called a
\emph{generalised sub-Riemannian manifold} (or just a \emph{sub-Riemannian manifold}).
\end{definition}

The family \(\mathcal D\) of smooth horizontal vector fields is a finitely-generated
module over \(C^\infty(\M)\).
The continuous distribution \(\{\mathcal D_x\}_{x\in\M}\) associated
with \((\M,\E,\sigma,\psi)\) is defined as
\[
\mathcal D_x\coloneqq\big\{v(x)\,\big|\,v\in\mathcal D\big\}\leq\T_x\M
\quad\text{ for every }x\in\M.
\]
We say that \(r(x)\coloneqq\dim \mathcal D_x\leq\dim\M$ is the \emph{rank}
of the sub-Finsler structure \((\E,\sigma,\psi)\) at \(x\in\M\).

Given any point \(x\in\M\) and any vector \(v\in\mathcal D_x\),
we define the quantity \({\|v\|}_x\) as
\begin{equation}\label{eq:horizontal_norm}
{\|v\|}_x\coloneqq\inf\big\{\sigma(u)\;\big|\;u\in\E_x,\,(x,v)=\psi(u)\big\}.
\end{equation}
Therefore, it holds that \({\|\cdot\|}_x\) is a norm on \(\mathcal D_x\).
Furthermore, if \((\E,\sigma,\psi)\) is a sub-Riemannian structure on \(\M\), then each
norm \({\|\cdot\|}_x\) is induced by some scalar product \(\la\cdot,\cdot\ra_x\).
\bigskip

\begin{definition}[Horizontal curve]\label{def:horizontal_curve}
Let \((\M,\E,\sigma,\psi)\) be a sub-Finsler manifold. Let \(\gamma\colon[0,1]\to\M\)
be a continuous curve such that for any \(\bar t\in[0,1]\)
there exist \(\delta>0\) and a chart \((U,\phi)\) of \(\M\) such that
\(\gamma(I)\subseteq U\) and \(\phi\circ\gamma|_I\colon I\to\R^{\dim\M}\)
is Lipschitz, where we set \(I\coloneqq(\bar t-\delta,\bar t+\delta)\cap[0,1]\).
Then the curve \(\gamma\) is said to be \emph{horizontal} provided
there is an \(L^\infty\)-section \(u\) of the pullback bundle \(\gamma^*\E\)
-- i.e., a map \([0,1]\ni t\mapsto u(t)\in\E_{\gamma_t}\) that is measurable
and essentially bounded -- such that
\[(\gamma_t,\dot\gamma_t)=\psi\big(u(t)\big)
\quad\text{ holds for a.e.\ }t\in[0,1].\]
The \emph{sub-Finsler length} of the curve \(\gamma\) is defined as
\(\ell_{\rm CC}(\gamma)\coloneqq\int_0^1{\|\dot\gamma_t\|}_{\gamma_t}\,\d t\).
\end{definition}
\begin{definition}[Carnot--Carath\'{e}odory distance]\label{def:CC_distance}
Let \((\M,\E,\sigma,\psi)\) be a sub-Finsler manifold. Fix any \(x,y\in\M\).
Then we define the \emph{Carnot--Carath\'{e}odory distance} between
\(x\) and \(y\) as
\begin{equation}\label{eq:d_CC}
\sfd_{\rm CC}(x,y)\coloneqq\inf\Big\{\ell_{\rm CC}(\gamma)\;\Big|\;
\gamma\text{ is a horizontal curve in }\M\text{ such that }
\gamma_0=x\text{ and }\gamma_1=y\Big\}.
\end{equation}
\end{definition}
\begin{theorem}[Chow--Rashevskii]\label{thm:Chow-Rashevskii}
Let \((\M,\E,\sigma,\psi)\) be a sub-Finsler manifold.
Then \(\sfd_{\rm CC}\) is a distance on \(\M\) that
induces the manifold topology.
\end{theorem}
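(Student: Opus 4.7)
Symmetry and the triangle inequality for $\sfd_{\rm CC}$ are immediate: the time-reversal of a horizontal curve is horizontal with the same sub-Finsler length, and the concatenation of two horizontal curves is horizontal with additive length. Thus $\sfd_{\rm CC}\colon\M\times\M\to[0,+\infty]$ is automatically an extended pseudodistance, and the actual content of the theorem consists of three points: (a) finiteness, $\sfd_{\rm CC}<+\infty$; (b) separation, $\sfd_{\rm CC}(x,y)=0\Longrightarrow x=y$; and (c) that the $\sfd_{\rm CC}$-topology coincides with the manifold topology.

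The plan is to establish (a), (b), and (c) simultaneously via a \emph{local ball-box estimate} at every $\bar x\in\M$. Pick a finite family of smooth horizontal vector fields $v_1,\dots,v_r\in\mathcal D$ whose values at $\bar x$ span $\mathcal D_{\bar x}$. The H\"ormander condition then produces iterated Lie brackets $W_1,\dots,W_n$ of $v_1,\dots,v_r$, of depths $k_1,\dots,k_n\in\N$ (with $n=\dim\M$), whose values at $\bar x$ form a basis of $\T_{\bar x}\M$. The classical flow-commutator identity
\[
[X,Y](x)=\lim_{t\to 0^+}\frac{1}{t^2}\bigl(\phi^{-Y}_t\circ\phi^{-X}_t\circ\phi^Y_t\circ\phi^X_t(x)-x\bigr)
\]
and its depth-$k$ iterates let one build, for each $j$, a one-parameter family $\Psi^j_s$ of horizontal flow words (concatenations of pieces of flows of the $v_i$'s, each piece of duration of order $s^{1/k_j}$) with Taylor expansion $\Psi^j_s(x)=x+s\,W_j(x)+o(s)$ in any local chart around $\bar x$. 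Setting $\Phi(s_1,\dots,s_n)\coloneqq(\Psi^1_{s_1}\circ\dots\circ\Psi^n_{s_n})(\bar x)$, an inverse-function-theorem argument with weighted coordinates shows that $\Phi$ is a local homeomorphism onto a neighborhood of $\bar x$. Since the sub-Finsler length of $\Psi^j_{s_j}$ is of order $|s_j|^{1/k_j}$ (because $\sigma$ is continuous and each piece is horizontal), concatenation yields the ball-box estimate
\[
\frac{1}{C}\sum_{j=1}^n|s_j|^{1/k_j}\leq\sfd_{\rm CC}\bigl(\bar x,\Phi(s_1,\dots,s_n)\bigr)\leq C\sum_{j=1}^n|s_j|^{1/k_j}
\]
for some $C>0$ and $(s_1,\dots,s_n)$ close to $0$. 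The upper bound gives local finiteness and continuity of $y\mapsto\sfd_{\rm CC}(\bar x,y)$; the lower bound gives separation; together they show that $\sfd_{\rm CC}$-balls and manifold neighborhoods of $\bar x$ are mutually absorbent.

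Finiteness is then globalised by connectedness: the set $\{y\in\M\,:\,\sfd_{\rm CC}(\bar x,y)<+\infty\}$ is open by the upper estimate applied at each of its points, and closed by the same estimate applied at its boundary points, hence it equals $\M$. The principal technical obstacle in this programme is the explicit construction of the flow words $\Psi^j_s$ realising each iterated bracket $W_j$ and the verification of the weighted non-degenerate differential of $\Phi$ at the origin -- the heart of the classical ball-box theorem (Nagel--Stein--Wainger, Gromov). This is standard enough that the paper points to \cite{LeDonne_lecture}, though the rank-varying setting requires some care in choosing the depths $k_j$ and the spanning brackets uniformly on a neighborhood, which is precisely the technical content already handled by the continuous-distribution machinery developed in Theorem \ref{thm:rank_vary_distr}.
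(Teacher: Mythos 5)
The paper does not actually prove this theorem: it is recalled, without proof, from \cite[Section 2.3]{LeDonne_lecture}, so there is no in-paper argument to compare against. Your outline is the standard Chow--Rashevskii/ball-box strategy and is correct in structure, but three points deserve attention. First, the fields \(v_1,\dots,v_r\) should not be chosen merely to span \(\mathcal D_{\bar x}\): in the rank-varying setting, fields spanning the fiber at \(\bar x\) need not generate the right brackets; what the H\"ormander condition gives is finitely many smooth horizontal fields whose \emph{iterated brackets} span \(\T_{\bar x}\M\), and it is along that family that one must flow. Second, the map \(\Phi\) is not \(C^1\) at the origin in the weighted variables (the flow times scale like \(|s_j|^{1/k_j}\)), so the inverse function theorem does not apply literally; one needs the topological-degree/open-mapping version of the argument. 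You correctly identify this as the technical core, but calling it ``an inverse-function-theorem argument'' glosses over precisely the step that makes Chow's theorem nontrivial. Third, the lower ball-box bound \(\frac1C\sum_j|s_j|^{1/k_j}\le\sfd_{\rm CC}\) is both harder than Chow--Rashevskii (it requires the brackets \(W_j\) and depths \(k_j\) to be adapted to the flag at \(\bar x\); with carelessly chosen depths it is false) and unnecessary: separation, and the fact that every manifold neighbourhood of \(\bar x\) contains a \(\sfd_{\rm CC}\)-ball, already follow from the elementary local estimate \(\|v\|_x\ge c\,|v|\) in a chart, which is immediate from the continuity of \(\sigma\) and the fiberwise linearity of \(\psi\). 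With the lower bound replaced by this elementary comparison and the open-mapping step either carried out or explicitly cited, your proof scheme is sound.
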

The metric space \((\M,\sfd_{\rm CC})\) is complete if and only
if \(\bar B_r(x)\) is compact for all \(x\in\M\) and \(r>0\).
\begin{proposition}\label{prop:horizontal_vs_Lipschitz}
Let \((\M,\E,\sigma,\psi)\) be a sub-Finsler manifold. Let \(\gamma\colon[0,1]\to\M\)
be a curve in \(\M\). Then \(\gamma\) is \(\sfd_{\rm CC}\)-Lipschitz
if and only if it is horizontal. Moreover, in such case it holds that
\[{\|\dot\gamma_t\|}_{\gamma_t}=
\lim_{h\to 0}\frac{\sfd_{\rm CC}(\gamma_{t+h},\gamma_t)}{|h|}
\quad\text{ for a.e.\ }t\in[0,1].\]
\end{proposition}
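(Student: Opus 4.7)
\medskip

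\textbf{Proof proposal.}
The statement splits into two implications plus the metric-derivative formula; I treat them in turn.

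\emph{Horizontal $\Rightarrow$ $\sfd_{\rm CC}$-Lipschitz.} If $\gamma$ is horizontal with an $L^\infty$-section $u$ of $\gamma^*\E$ satisfying $(\gamma_r,\dot\gamma_r)=\psi(u(r))$ a.e., then for any $0\leq s<t\leq 1$ the restriction $\gamma|_{[s,t]}$ is itself horizontal, so by Definition \ref{def:CC_distance} and the definition of $\|\cdot\|_{\gamma_r}$ in \eqref{eq:horizontal_norm},
\[
\sfd_{\rm CC}(\gamma_s,\gamma_t)\leq \ell_{\rm CC}(\gamma|_{[s,t]})=\int_s^t\|\dot\gamma_r\|_{\gamma_r}\,\d r\leq(t-s)\,\mathrm{ess\,sup}_{[0,1]}\sigma(u).
\]
Dividing by $|h|=t-s$ and applying the Lebesgue differentiation theorem to the integrable function $r\mapsto\|\dot\gamma_r\|_{\gamma_r}$ yields the a.e.\ inequality $\varlimsup_{h\to 0}\sfd_{\rm CC}(\gamma_{t+h},\gamma_t)/|h|\leq\|\dot\gamma_t\|_{\gamma_t}$.

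\emph{$\sfd_{\rm CC}$-Lipschitz $\Rightarrow$ horizontal, plus reverse inequality.}  This is the real content. The plan has four steps.

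First, I establish a \emph{local Euclidean comparison}: fix a relatively compact chart $(U,\phi)$ and a smooth Riemannian metric $g$ on $U$; using continuity of $\sigma$ and $\psi$ together with compactness of the $g$-unit spheres of the fibers $\mathcal D_x$ (for $x$ in a compact subset of $U$), one obtains a constant $c>0$ such that $\|v\|_x\geq c\|v\|_g$ for every $x$ in the compact set and every $v\in\mathcal D_x$. Integrating along horizontal curves yields $\sfd_g\leq c^{-1}\sfd_{\rm CC}$ locally, so any $\sfd_{\rm CC}$-Lipschitz $\gamma$ is locally Lipschitz in every chart. Rademacher's theorem then gives the existence of the classical derivative $\dot\gamma_t$ for a.e.\ $t\in[0,1]$, as well as an $L^\infty$-bound in charts.

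Second, I show $\dot\gamma_t\in\mathcal D_{\gamma_t}$ and $\|\dot\gamma_t\|_{\gamma_t}\leq|\dot\gamma|(t):=\lim_{h\to 0}\sfd_{\rm CC}(\gamma_{t+h},\gamma_t)/|h|$ at a.e.\ $t$ (the metric derivative exists a.e.\ for any Lipschitz curve). At such a $t$, for each small $h>0$ pick a horizontal curve $\eta_h:[0,|h|]\to\M$ from $\gamma_t$ to $\gamma_{t+h}$ with length at most $\sfd_{\rm CC}(\gamma_{t+h},\gamma_t)+h^2$, together with a measurable lift $w_h$ of $\dot\eta_h$ through $\psi$ that almost realises the infimum in \eqref{eq:horizontal_norm}. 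Averaging: the vector
\[
\bar u_h:=\frac{1}{h}\int_0^h w_h(r)\,\d r
\]
makes sense in a chart trivialisation of $\E$ over a small neighbourhood of $\gamma_t$ (all $\eta_h$ lie in such a neighbourhood for small $h$, by the local comparison), it satisfies $\sigma(\bar u_h)\leq h^{-1}\int_0^h\sigma(w_h)\,\d r=\ell_{\rm CC}(\eta_h)/h\to|\dot\gamma|(t)$, and $\psi(\bar u_h)\to(\gamma_t,\dot\gamma_t)$ by the chain rule for $\phi\circ\gamma$. Finite-dimensionality of $\E_{\gamma_t}$ and continuity of $\sigma,\psi$ yield a cluster point $\bar u\in\E_{\gamma_t}$ with $\psi(\bar u)=(\gamma_t,\dot\gamma_t)$ and $\sigma(\bar u)\leq|\dot\gamma|(t)$. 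Hence $\dot\gamma_t\in\mathcal D_{\gamma_t}$ and $\|\dot\gamma_t\|_{\gamma_t}\leq|\dot\gamma|(t)$.

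Third, to produce the required global lift $u$ satisfying $\psi(u(t))=(\gamma_t,\dot\gamma_t)$ a.e.\ and $u\in L^\infty$, I apply a measurable selection theorem (Kuratowski--Ryll-Nardzewski) to the closed-valued multifunction $t\mapsto\{e\in\E_{\gamma_t}:\psi(e)=(\gamma_t,\dot\gamma_t),\ \sigma(e)=\|\dot\gamma_t\|_{\gamma_t}\}$; essential boundedness follows from $\sigma(u(t))=\|\dot\gamma_t\|_{\gamma_t}\leq|\dot\gamma|(t)\leq\Lip^{\sfd_{\rm CC}}(\gamma)$. This proves horizontality.

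Fourth, combining the two inequalities from the two directions yields $\|\dot\gamma_t\|_{\gamma_t}=|\dot\gamma|(t)$ a.e., which is exactly the last display in the statement.

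\emph{Main obstacle.} The delicate point is the second step: passing from a sequence of approximate horizontal connections $\eta_h$ with controlled $\sfd_{\rm CC}$-lengths to a single horizontal vector $\bar u\in\E_{\gamma_t}$ lifting the classical derivative $\dot\gamma_t$ with $\sigma(\bar u)$ bounded by the metric derivative. The averaging construction needs the local comparison of Step~1 (to trap the $\eta_h$ in a trivialising neighbourhood) and uses finite-dimensionality of the fiber, continuity of $\sigma$, and the lower semicontinuity of $\sigma$ in the pointwise limit.
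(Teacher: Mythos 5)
The paper does not prove this proposition: it is stated without proof, with the surrounding material attributed to \cite{LeDonne_lecture}, so there is no in-paper argument to compare yours against. Your proof is correct and follows the standard route: the easy implication via \(\sfd_{\rm CC}\le\ell_{\rm CC}\) plus Lebesgue differentiation, and the hard implication via (i) a local comparison \(\sfd_g\le C\,\sfd_{\rm CC}\) forcing chart-Lipschitzianity and a.e.\ differentiability, (ii) a blow-up of nearly optimal horizontal joining curves, averaging their lifts in a trivialisation and extracting a cluster point in the finite-dimensional fiber to get \(\dot\gamma_t\in\mathcal D_{\gamma_t}\) and \(\|\dot\gamma_t\|_{\gamma_t}\le|\dot\gamma|(t)\), (iii) measurable selection for the global \(L^\infty\) lift, and (iv) matching the two inequalities. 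Two points deserve more care than you give them. In Step 1 you invoke ``compactness of the \(g\)-unit spheres of the fibers \(\mathcal D_x\)''; since \(x\mapsto\|\cdot\|_x\) is only lower semicontinuous (cf.\ Lemma \ref{lem:Norm_Borel}) and \(\dim\mathcal D_x\) can jump, a compactness argument run directly on the horizontal side is delicate --- the clean route is through the total space of \(\E\): over a compact set \(K\), continuity of \(\sigma\) and smoothness of \(\psi\) give \(|\psi(u)|_g\le C\,\sigma(u)\) for all \(u\in\E|_K\), whence \(|v|_g\le C\,\|v\|_x\) by taking the infimum over lifts in \eqref{eq:horizontal_norm} (one also needs the usual escape argument to keep competitor curves inside \(K\)). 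In Step 2, \(\bar u_h\), \(\sigma(\bar u_h)\) and \(\psi(\bar u_h)\) are computed in the fiber over \(\gamma_t\), while \(w_h(r)\) lifts \(\dot\eta_h(r)\) at the point \(\eta_h(r)\); to transport everything to \(\E_{\gamma_t}\) with \(o(1)\) errors you need the uniform bound \(\frac1h\int_0^h|w_h|\,\d r\lesssim 1\) (which follows from \(\ell_{\rm CC}(\eta_h)\lesssim h\) and the comparison of \(\sigma\) with a Euclidean fiber norm), the locally uniform convergence \(\sigma_x\to\sigma_{\gamma_t}\), \(\psi_x\to\psi_{\gamma_t}\), and a measurable selection already at this stage to choose \(w_h\) measurably. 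These are routine repairs; the architecture of the proof is sound.
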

\subsection{Structure of the horizontal bundle}
Let \((\M,\E,\sigma,\psi)\) be a sub-Finsler manifold, whose associated
distribution is denoted by ${\{\mathcal D_x\}}_{x\in\M}$.
Then we define the \emph{horizontal bundle} \(\H\M\) as
\[\H\M\coloneqq\bigsqcup_{x\in\M}\mathcal D_x.\]
Moreover, we define the function \(\rho\colon\T\M\to[0,+\infty]\) as
\begin{equation}\label{eq:Norm}
\rho(x,v)\coloneqq\left\{\begin{array}{ll}
{\|v\|}_x\\
+\infty
\end{array}\quad\begin{array}{ll}
\text{ if }(x,v)\in\H\M,\\
\text{ otherwise.}
\end{array}\right.
\end{equation}
\begin{lemma}\label{lem:Norm_Borel}
Let \((\M,\E,\sigma,\psi)\) be a sub-Finsler manifold.
Then the function \(\rho\) defined in \eqref{eq:Norm} is a generalised
metric on \(\M\). In particular, the horizontal bundle \(\H\M\) is a
Borel subset of \(\T\M\).
\end{lemma}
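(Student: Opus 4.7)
The plan is to verify the three defining properties in Definition \ref{def:gen_metric} for the function $\rho$ of \eqref{eq:Norm}, viewed as a function on the tangent bundle $\T\M$: namely, that each $\rho_x$ is a generalized norm on $\T_x\M$ with finiteness domain $\mathcal D_x$, that $\{\mathcal D_x\}_{x\in\M}$ is a continuous distribution, and that $\rho$ is lower semicontinuous. Once $\rho$ is known to be lower semicontinuous (hence Borel), the last sentence of the lemma is immediate, since $\H\M=\rho^{-1}\big([0,+\infty)\big)$.

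The fiberwise condition is structural. The quantity $\|v\|_x$ in \eqref{eq:Norm} is the quotient norm induced on $\mathcal D_x\cong\E_x/\ker(\psi|_{\E_x})$ by the norm $\sigma|_{\E_x}$; using finite-dimensionality of $\E_x$ and the fact that the sublevel sets of $\sigma|_{\E_x}$ are compact, the infimum in \eqref{eq:Norm} is attained and is strictly positive whenever $v\neq 0$ (the closed affine subspace $\psi^{-1}(v)\cap\E_x$ does not contain the origin). Nontriviality of $\mathcal D_x$ follows from the H\"ormander condition on $\mathcal D$: if every smooth horizontal vector field vanished at some point $x$, so would all their iterated Lie brackets at $x$, contradicting ${\rm Lie}_x(\mathcal D)=\T_x\M$. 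The continuous-distribution property of $\{\mathcal D_x\}_{x\in\M}$ then follows directly from condition iii) of Theorem \ref{thm:rank_vary_distr}, taking $\tilde\E\coloneqq\E$ and $\psi$ itself as the continuous bundle morphism realizing $\mathcal D_x=\psi(\E_x)$.

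The central step is lower semicontinuity of $\rho$. I would take a sequence $(x_n,v_n)\to(x,v)$ in $\T\M$ with $L\coloneqq\liminf_n\rho(x_n,v_n)<+\infty$ (the opposite case is trivial) and show $\rho(x,v)\leq L$. After passing to a subsequence, $\rho(x_n,v_n)\to L$ and $v_n\in\mathcal D_{x_n}$ eventually. Attainment in \eqref{eq:Norm} provides $u_n\in\E_{x_n}$ with $\psi(u_n)=(x_n,v_n)$ and $\sigma(u_n)=\|v_n\|_{x_n}$. Working in a local trivialization of $\E$ around $x$ and fixing an auxiliary Euclidean norm $|\cdot|$ on the fibers, the function $y\mapsto\min\{\sigma(y,w)\,:\,|w|=1\}$ is continuous and positive, hence bounded below by a positive constant in a neighborhood of $x$. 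This forces $(u_n)$ to be bounded in the trivialization, and passing to a further subsequence one obtains $u_n\to u\in\E_x$. Continuity of $\psi$ then gives $\psi(u)=(x,v)$, whence $(x,v)\in\H\M$, while continuity of $\sigma$ yields $\sigma(u)=L$, so $\rho(x,v)=\|v\|_x\leq\sigma(u)=L$, as required.

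The main technical obstacle is the extraction of a convergent subsequence of the optimizers $u_n$ from fibers $\E_{x_n}$ that vary with $n$; this is precisely the step that demands the passage to a local trivialization and the quantitative lower bound on $\sigma$ coming from its fiberwise-norm property. Once lower semicontinuity is in hand, the remaining assertions (fiberwise norm and continuous distribution) have already been established, and the Borel measurability of $\H\M$ follows at once.
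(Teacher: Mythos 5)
Your proposal is correct and follows essentially the same route as the paper: the continuous-distribution property is obtained from Theorem \ref{thm:rank_vary_distr} (condition iii) with $\tilde\E=\E$), and lower semicontinuity is proved by extracting a convergent subsequence of (near-)minimizers $u_n\in\E_{x_n}$ and passing to the limit using continuity of $\psi$ and $\sigma$. Your version merely adds detail the paper leaves implicit — the fiberwise verification that $\|\cdot\|_x$ is a genuine norm and the uniform lower bound on $\sigma$ in a local trivialization that justifies the compactness step — so there is nothing to correct.
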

\begin{proof}
First of all, observe that \(x\mapsto D(\rho_x)=\mathcal D_x\)
is a continuous distribution by Theorem \ref{thm:rank_vary_distr}.
With this said, we only have to prove that the function \(\rho\) is lower semicontinuous.
To this aim, let us fix a sequence
\(\big\{(x_n,v_n)\big\}_{n\in\N\cup\{\infty\}}\subseteq\T\M\)
such that \((x_n,v_n)\to(x_\infty,v_\infty)\). We claim that
\begin{equation}\label{eq:Norm_Borel_claim}
\rho(x_\infty,v_\infty)\leq\varliminf_{n\to\infty}\rho(x_n,v_n).
\end{equation}
Without loss of generality, we can assume that
\(\varliminf_n\rho(x_n,v_n)<+\infty\). For any \(n\in\N\) choose an
element \(u_n\in\E_{x_n}\) such that \(\psi(u_n)=(x_n,v_n)\) and
\(\sigma(u_n)\leq\rho(x_n,v_n)+1/n\). Moreover, pick a subsequence \((n_k)_k\)
with \(\lim_k\rho(x_{n_k},v_{n_k})=\varliminf_n\rho(x_n,v_n)\).
Therefore, it holds that \(\big\{\sigma(u_{n_k})\big\}_{k\in\N}\) is bounded,
thus there exists \(u_\infty\in\E_{x_\infty}\) such that (possibly passing
to a not relabelled subsequence) we have \(u_{n_k}\to u_\infty\). Note that
\(\psi(u_\infty)=\lim_k\psi(u_{n_k})=\lim_k(x_{n_k},v_{n_k})=(x_\infty,v_\infty)\)
by continuity of \(\psi\). Consequently, by using the continuity of \(\sigma\)
and the definition of \(\rho_{x_\infty}\) we conclude that
\[
\rho(x_\infty,v_\infty)\leq\sigma(u_\infty)=\lim_{k\to\infty}\sigma(u_{n_k})
\leq\lim_{k\to\infty}\rho(x_{n_k},v_{n_k})+\frac{1}{n_k}
=\varliminf_{n\to\infty}\rho(x_n,v_n),
\]
which proves the claim \eqref{eq:Norm_Borel_claim}. Hence, the statement is finally achieved.
\end{proof}

A vector field \(v\colon\M\to\T\M\) is said to be a \emph{section} of \(\H\M\)
provided \(v(x)\in\mathcal D_x\) for every \(x\in\M\).
We say that a section \(v\) of \(\H\M\) is
\emph{Borel} provided it is Borel measurable as a map from \(\M\) to \(\T\M\).

It immediately follows from Lemma \ref{lem:Norm_Borel} that
\[\M\ni x\longmapsto{\big\|v(x)\big\|}_x\in\R\text{ is a Borel function,}
\quad\text{ for every Borel section }v\text{ of }\H\M.\]
The space of Borel sections of \(\H\M\) is a vector space with
respect to the usual pointwise operations.
\begin{definition}[The space \(L^2(\H\M;\mu)\)]\label{def:sect_HM}
Let \((\M,\E,\sigma,\psi)\) be a sub-Finsler manifold.
Let \(\mu\) be a non-negative Borel measure on \((\M,\sfd_{\rm CC})\).
Then we define the space \(L^2(\H\M;\mu)\) as the set of (equivalence classes up
to \(\mu\)-a.e.\ equality of) all Borel sections \(v\) of the horizontal bundle
\(\H\M\) such that \(\M\ni x\mapsto{\big\|v(x)\big\|}_x\in\R\) belongs to \(L^2(\mu)\).
The space \(L^2(\H\M;\mu)\) is an \(L^\infty(\mu)\)-module
with respect to the natural pointwise operations, thus in particular
it is a vector space.
\end{definition}
\begin{remark}[Pointwise parallelogram identity]\label{rmk:geom_pr}{\rm
Let \((\M,\E,\sigma,\psi)\) be a sub-Riemannian manifold.
Given that each space \(\big(\mathcal D_x,{\|\cdot\|}_x\big)\) is Hilbert,
we readily deduce that
\[
{\big\|v(x)+w(x)\big\|}^2_x+{\big\|v(x)-w(x)\big\|}^2_x
=2\,{\big\|v(x)\big\|}^2_x+2\,{\big\|w(x)\big\|}^2_x
\quad\text{ holds for }\mu\text{-a.e.\ }x\in\M,
\]
for every \(v,w\in L^2(\H\M;\mu)\).
\fr}\end{remark}
\medskip

Given any smooth function \(f\in C^\infty(\M)\), we denote by \(\d f\) its
differential, which is a smooth section of the cotangent bundle \(\T^*\M\).
Then the \emph{horizontal differential} \(\d_\H f\) of \(f\) is defined as
\begin{equation}\label{eq:horizontal_differential}
\d_\H f(x)\coloneqq\d_x f|_{\mathcal D_x}\in\mathcal D_x^*
\quad\text{ for every }x\in\M.
\end{equation}
\begin{lemma}\label{lem:grad_fiberwise_dense}
Let \((\M,\E,\sigma,\psi)\) be a given sub-Finsler manifold.
Then there exists a countable family of functions \(\mathscr C\subseteq C^\infty_c(\M)\)
such that \(\big\{\d_\H f(x)\,:\,f\in\mathscr C\big\}\) is dense in
\(\mathcal D_x^*\) for every \(x\in\M\).
\end{lemma}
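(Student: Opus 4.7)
The plan is to build \(\mathscr C\) from local coordinate functions multiplied by cut-offs, and then close under \(\mathbb Q\)-linear combinations. The key observation is that the restriction map \(\T_x^*\M\to\mathcal D_x^*\) is surjective: since \(\mathcal D_x\leq\T_x\M\), any linear functional on the subspace extends to the ambient space. Hence it is enough to produce countably many functions in \(C^\infty_c(\M)\) whose \emph{full} differentials \(\d_x f\) already span \(\T_x^*\M\) at every \(x\in\M\).

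Concretely, I would first use paracompactness and second countability of \(\M\) to pick a countable open cover \(\{U_i\}_{i\in\N}\) with \(\bar U_i\) compact and contained in the domain \(V_i\) of a coordinate chart \(\varphi_i=(x_i^1,\ldots,x_i^n)\colon V_i\to\R^n\), where \(n\coloneqq\dim\M\). For each \(i\), choose a bump function \(\chi_i\in C^\infty_c(V_i)\) with \(\chi_i\equiv 1\) on \(\bar U_i\), and set \(f_i^k\coloneqq\chi_i\cdot x_i^k\), extended by \(0\) outside \(V_i\). Each \(f_i^k\) then lies in \(C^\infty_c(\M)\). Let \(\mathscr C\) be the set of all finite \(\mathbb Q\)-linear combinations of the countable family \(\{f_i^k\,:\,i\in\N,\,k=1,\ldots,n\}\); this \(\mathscr C\) is itself countable and contained in \(C^\infty_c(\M)\).

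To verify the required property, fix any \(x\in\M\) and pick \(i\) with \(x\in U_i\). Since \(\chi_i\equiv 1\) on the open neighborhood \(U_i\) of \(x\), the Leibniz rule yields \(\d_x f_i^k=\d_x x_i^k\), and these \(n\) covectors form a basis of \(\T_x^*\M\). By the surjectivity noted above, the covectors \(\d_\H f_i^k(x)=\d_x x_i^k|_{\mathcal D_x}\), \(k=1,\ldots,n\), span \(\mathcal D_x^*\). The \(\mathbb Q\)-linear combinations of a spanning set of a finite-dimensional real vector space form a dense subset: indeed, they are the image of \(\mathbb Q^n\) under a continuous surjective linear map, which is automatically open. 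Therefore \(\{\d_\H f(x)\,:\,f\in\mathscr C\}\) is dense in \(\mathcal D_x^*\), as required.

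There is no genuine obstacle here: the argument is a standard combination of chart-and-cut-off techniques with the elementary density of \(\mathbb Q^n\) in \(\R^n\). The potentially delicate point---that the rank of \(\mathcal D_x\) may vary with \(x\)---turns out to be irrelevant, since the construction produces a spanning set of \(\mathcal D_x^*\) directly at each point, whatever its dimension.
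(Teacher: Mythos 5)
Your proposal is correct and follows essentially the same route as the paper: a countable cover by chart domains, compactly supported functions whose full differentials form a basis of \(\T_x^*\M\) locally (hence their horizontal differentials span \(\mathcal D_x^*\) by surjectivity of the restriction map), and \(\mathbb Q\)-linear combinations to get countable density. The only difference is that you spell out the cut-off construction of the functions, which the paper leaves implicit.
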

\begin{proof}
Call \(n\coloneqq\dim\M\). By Lindel\"{o}f lemma we know that there
exists an open covering \((\Omega_j)_{j\in\N}\) of \(\M\) with the following
property: for every \(j\in\N\) there exist some functions
\(f^j_1,\ldots,f^j_n\in C^\infty_c(\M)\) such that
\(\d_x f^j_1,\ldots,\d_x f^j_n\) is a basis of \(\T^*_x\M\) for every \(x\in\Omega_j\).
Consequently, \(\d_\H f^j_1(x),\ldots,\d_\H f^j_n(x)\) generate
\(\mathcal D_x^*\) for every \(x\in\Omega_j\). Calling \({\rm V}_j\) the
\(\mathbb Q\)-linear subspace of \(C^\infty_c(\M)\) generated by
\(f^j_1,\ldots,f^j_n\), one clearly has that
\(\big\{\d_\H f(x)\,:\,f\in{\rm V}_j\big\}\) is dense in \(\mathcal D_x^*\)
for every \(x\in\Omega_j\). Therefore, the countable family of functions
\(\mathscr C\coloneqq\bigcup_{j\in\N}{\rm V}_j\)
fulfills the required properties.
\end{proof}
\begin{lemma}
Let \((\M,\E,\sigma,\psi)\) be a sub-Finsler manifold.
Let \(f\in C^1_c(\M)\). Then \(f\in\LIP(\M)\) and
\begin{equation}\label{eq:estimate_nabla_H}
{\big\|\d_{\rm H}f(x)\big\|}_x^*\leq\lip(f)(x)\quad\text{ for every }x\in\M.
\end{equation}
\end{lemma}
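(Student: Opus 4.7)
The plan is to establish the two conclusions separately: first the global Lipschitzness $f\in\LIP(\M)$, and then the pointwise inequality ${\|\d_\H f(x)\|}_x^*\leq\lip(f)(x)$, where ${\|\cdot\|}_x^*$ denotes the norm on $\mathcal D_x^*$ dual to ${\|\cdot\|}_x$.

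For the Lipschitz part, I would introduce $g(x)\coloneqq{\|\d_\H f(x)\|}_x^*$ and aim at a uniform bound on $g$. Since $f$ is $C^1$ with compact support, $\d f$ vanishes identically on the open set $\M\setminus{\rm supp}(f)$, hence so does $g$. On the compact set ${\rm supp}(f)$, I would use the identity
\[
{\|\d_\H f(x)\|}_x^*=\sup\big\{\d_x f\big[\psi(u)\big]\;\big|\;u\in\E_x,\,\sigma(u)\leq 1\big\}
\]
together with a Berge-type compactness argument --- in a local trivialisation of the \emph{smooth} bundle $\E$, relying on continuity of $\sigma$, $\psi$, $\d f$ and compactness of the $\sigma$-unit balls --- to deduce upper semicontinuity of $g$, whence a uniform bound $g\leq L$. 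Then, for any horizontal curve $\gamma$ joining $x$ to $y$, the composition $f\circ\gamma$ is absolutely continuous with $(f\circ\gamma)'_t=\d_\H f(\gamma_t)\big[\dot\gamma_t\big]$ a.e.\ (since $\dot\gamma_t\in\mathcal D_{\gamma_t}$), so
\[
|f(y)-f(x)|\leq\int_0^1 g(\gamma_t)\,{\|\dot\gamma_t\|}_{\gamma_t}\,\d t\leq L\,\ell_{\rm CC}(\gamma);
\]
passing to the infimum over $\gamma$ in Definition~\ref{def:CC_distance} yields $|f(y)-f(x)|\leq L\,\sfd_{\rm CC}(y,x)$.

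For the pointwise bound, I would fix $x\in\M$, a non-zero $v\in\mathcal D_x$, and $\eps>0$; by \eqref{eq:horizontal_norm} I can choose $u\in\E_x$ with $\psi(u)=(x,v)$ and $\sigma(u)\leq{\|v\|}_x+\eps$. Since $\E$ is a smooth vector bundle, I extend $u$ to a smooth local section $\tilde u$ with $\tilde u(x)=u$, so that $X\coloneqq\psi\circ\tilde u\in{\rm Vec}(\M)$ is a smooth horizontal vector field. Let $\gamma$ denote the integral curve of $X$ with $\gamma_0=x$, defined on some $[0,\delta)$. By construction $\gamma$ is horizontal with $\dot\gamma_0=v\neq 0$ (so $\gamma_t\neq x$ for small $t>0$), and
\[
\sfd_{\rm CC}(\gamma_t,x)\leq\int_0^t\sigma\big(\tilde u(\gamma_s)\big)\,\d s,
\]
whose right derivative at $0$ equals $\sigma(u)\leq{\|v\|}_x+\eps$. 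Since $(f\circ\gamma)'_0=\d_\H f(x)[v]$, dividing and taking the limit as $t\to 0^+$ (with $\gamma_t\to x$) would give
\[
\lip(f)(x)\geq\varlimsup_{t\to 0^+}\frac{|f(\gamma_t)-f(x)|}{\sfd_{\rm CC}(\gamma_t,x)}\geq\frac{\big|\d_\H f(x)[v]\big|}{{\|v\|}_x+\eps}.
\]
Sending $\eps\to 0$ and taking the supremum over $v$ with ${\|v\|}_x\leq 1$ completes the argument.

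The subtlest step, I expect, is the upper semicontinuity of $g$: because the rank of the distribution $\{\mathcal D_x\}_{x\in\M}$ can vary, the dual spaces $\mathcal D_x^*$ are not of constant dimension, and one cannot naively take limits of dual norms on fibers of varying rank. The saving grace is that the supremum in the formula for $g$ is taken over the compact unit ball of $(\E_x,\sigma|_{\E_x})$ --- and $\E$ is a \emph{smooth} vector bundle --- so the argument reduces to a classical parametric-sup estimate in a local trivialisation of $\E$, entirely bypassing the irregularities of $\mathcal D$.
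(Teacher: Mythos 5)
Your proof of the pointwise inequality \eqref{eq:estimate_nabla_H} is essentially the paper's argument: lift \(v\) to \(u\in\E_x\) with \(\sigma(u)\leq\|v\|_x+\eps\), extend to a smooth section, flow along the resulting smooth horizontal vector field, and compare \(\big(f\circ\gamma\big)'_0=\d_\H f(x)[v]\) with the bound \(\sfd_{\rm CC}(\gamma_t,x)\leq\int_0^t\sigma\big(\tilde u(\gamma_s)\big)\,\d s\), whose averaged limit is \(\sigma(u)\); the paper phrases the last step via the estimate \(\|\dot\gamma_s\|_{\gamma_s}\leq\|v\|_x+2\eps\) for small \(s\), but this is the same computation. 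Where you genuinely diverge is the Lipschitz claim: the paper simply cites \cite[Lemma 3.16]{AgrBarBos19}, whereas you give a self-contained proof by showing that \(g(x)=\|\d_\H f(x)\|_x^*\) is bounded (via upper semicontinuity of the parametric supremum over the compact \(\sigma\)-unit balls of the smooth bundle \(\E\), which correctly sidesteps the rank-varying distribution \(\mathcal D\)) and then integrating \((f\circ\gamma)'_t=\d_\H f(\gamma_t)[\dot\gamma_t]\) along horizontal curves and taking the infimum in \eqref{eq:d_CC}. Both steps of your argument are sound; the self-contained Lipschitz proof costs a page of routine compactness but makes the lemma independent of the external reference.
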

\begin{proof}
Lipschitzianity of \(f\) can be proven by arguing, e.g., as in
\cite[Lemma 3.16]{AgrBarBos19}. To show \eqref{eq:estimate_nabla_H},
let \(v\in\mathcal D_x\) and \(\eps>0\) be fixed.
We know from \eqref{eq:horizontal_norm} that there exists \(u\in\E_x\)
such that \((x,v)=\psi(u)\) and \(\sigma(u)\leq{\|v\|}_x+\eps\). Choose a
smooth section \(\eta\) of \(\E\) such that \(\eta(x)=u\).
Since \(\psi\circ\eta\) is a smooth vector field on \(\M\),
there exists a smooth solution \(\gamma\colon[0,\delta']\to\M\) to the ODE
\[\left\{\begin{array}{ll}
\dot\gamma_t=(\psi\circ\eta)(\gamma_t)\quad\text{ for every }t\in[0,\delta'],\\
\gamma_0=x.
\end{array}\right.\]
Being \(\eta\circ\gamma\) continuous, we can find \(\delta\in(0,\delta')\) such
that \(\sigma\big(\eta(\gamma_t)\big)\leq\sigma(u)+\eps\) for every \(t\in[0,\delta]\).
Moreover, again by
\eqref{eq:horizontal_norm} we have that
\({\|\dot\gamma_t\|}_{\gamma_t}\leq\sigma\big(\eta(\gamma_t)\big)\)
for all \(t\in[0,\delta]\). Combining the previous estimates we get that
\({\|\dot\gamma_t\|}_{\gamma_t}\leq{\|v\|}_x+2\,\eps\) for every 
\(t\in[0,\delta]\). Therefore, we conclude that
\[\begin{split}
\d_\H f(x)[v]&\overset{\phantom{\eqref{eq:d_CC}}}=
\d_x f(\dot\gamma_0)=\lim_{t\searrow 0}\frac{f(\gamma_t)-f(x)}{t}
\leq\lip(f)(x)\,\lim_{t\searrow 0}\frac{\sfd_{\rm CC}(\gamma_t,\gamma_0)}{t}\\
&\overset{\eqref{eq:d_CC}}\leq
\lip(f)(x)\,\lim_{t\searrow 0}\frac{1}{t}\int_0^t{\|\dot\gamma_s\|}_{\gamma_s}\,\d s
\leq\lip(f)(x)\,\big({\|v\|}_x+2\,\eps\big).
\end{split}\]
Letting \(\eps\searrow 0\) we see that
\(\d_\H f(x)[v]\leq\lip(f)(x)\,{\|v\|}_x\)
for all \(v\in\mathcal D_x\), whence \eqref{eq:estimate_nabla_H} follows.
\end{proof}
\section{Main result: infinitesimal Hilbertianity of sub-Riemannian manifolds}
\subsection{Derivations on weighted sub-Finsler manifolds}
The aim of this subsection is to provide an alternative to the
representation formula \eqref{eq:ptwse_norm_b} for the pointwise norm of
a derivation (with divergence) over a weighted sub-Finsler manifold
\(\M\). We would like
to express the pointwise norm of a derivation \(\b\) as the essential
supremum of the functions \(\b(f)\), where \(f\) varies in a countable
family of \(1\)-Lipschitz smooth functions. Given that the distance 
functions \(\sfd_{\rm CC}(\cdot,\bar x)\) from fixed points \(\bar x\in\M\) are not smooth (thus in particular not almost everywhere differentiable with respect to an arbitrary measure on \(\M\)), a new representation formula is needed.
\medskip

The following result states that any Carnot--Carath\'{e}odory distance
can be (monotonically) approximated by distances associated to
suitable Finsler metrics. A word on notation: given a Finsler metric \(F\)
on a manifold \(\M\), we denote by \(\sfd_F\) the induced distance on \(\M\).
\begin{theorem}\label{thm:approx_sR_with_Riem}
Let \((\M,\E,\sigma,\psi)\) be a sub-Finsler manifold.
Then there exists a sequence \((F_n)_n\) of Finsler metrics on \(\M\)
such that \(\sfd_{F_n}(x,y)\nearrow\sfd_{\rm CC}(x,y)\) holds
for every \(x,y\in\M\).
\end{theorem}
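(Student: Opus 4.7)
The plan is to deduce the theorem from the pointwise approximation of $\rho$ by Finsler metrics established in Theorem \ref{thm:approx_subFinsl}. By Lemma \ref{lem:Norm_Borel}, the function $\rho\colon\T\M\to[0,+\infty]$ from \eqref{eq:Norm} is a generalised metric on $\M$, so Theorem \ref{thm:approx_subFinsl} produces a sequence $(F_n)_n$ of Finsler metrics on $\M$ with $F_n(x,v)\nearrow\rho(x,v)$ for every $(x,v)\in\T\M$; this is my candidate sequence, and the main work is to show that the induced distances $\sfd_{F_n}$ are monotone and converge pointwise to $\sfd_{\rm CC}$.

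The easy half follows directly from the pointwise inequalities $F_n\leq F_{n+1}\leq\rho$. These pass to lengths curve by curve, giving $\ell_{F_n}(\gamma)\leq\ell_{F_{n+1}}(\gamma)$ for any Lipschitz curve and $\ell_{F_n}(\eta)\leq\ell_{\rm CC}(\eta)$ for any horizontal curve $\eta$ (with the convention $\ell_{\rm CC}\equiv+\infty$ on non-horizontal curves). Taking infima over curves from $x$ to $y$ yields $\sfd_{F_n}(x,y)\leq\sfd_{F_{n+1}}(x,y)\leq\sfd_{\rm CC}(x,y)$, so the limit $L\coloneqq\sup_n\sfd_{F_n}(x,y)$ exists in $[0,\sfd_{\rm CC}(x,y)]$.

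For the reverse bound $L\geq\sfd_{\rm CC}(x,y)$, I would for each $n$ pick a Lipschitz curve $\gamma_n\colon[0,1]\to\M$ from $x$ to $y$ parametrised at constant $F_n$-speed with $\ell_{F_n}(\gamma_n)\leq\sfd_{F_n}(x,y)+1/n\leq L+1/n$. Since $F_1\leq F_n$, the curves are uniformly $\sfd_{F_1}$-Lipschitz of constant $L+1$ and lie in the closed $\sfd_{F_1}$-ball of radius $L+1$ around $x$. Assuming this ball is precompact in $\M$, Arzel\`a--Ascoli extracts a subsequence converging uniformly to a Lipschitz curve $\gamma\colon[0,1]\to\M$ from $x$ to $y$. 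For each fixed $m$, the inequality $F_m\leq F_{n_k}$ (for $n_k\geq m$) together with lower semicontinuity of the length functional $\ell_{F_m}$ under uniform convergence (a standard fact for continuous Finsler metrics, via the representation $\ell_{F_m}(\gamma)=\sup_P\sum_i\sfd_{F_m}(\gamma(t_{i-1}),\gamma(t_i))$) yields $\ell_{F_m}(\gamma)\leq\liminf_k\ell_{F_{n_k}}(\gamma_{n_k})\leq L$. Applying monotone convergence of $F_m\nearrow\rho$ a.e.\ along $\gamma$ gives $\ell_{\rm CC}(\gamma)=\sup_m\ell_{F_m}(\gamma)\leq L$, which in particular forces $\gamma$ to be horizontal, so $\sfd_{\rm CC}(x,y)\leq\ell_{\rm CC}(\gamma)\leq L$.

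The main obstacle is the compactness needed for Arzel\`a--Ascoli: without further input, the closed $\sfd_{F_1}$-ball around $x$ need not be compact in $\M$. The clean resolution is to arrange at the outset that $F_1$ dominate a fixed complete auxiliary Finsler metric on $\M$, so that $\sfd_{F_1}$ is forward-complete and bounded closed balls are compact by Hopf--Rinow--Cohn-Vossen; this is compatible with the construction in Proposition \ref{prop:constr_g_n} after a harmless rescaling/relabelling. Alternatively, one can localise the extraction to a precompact neighbourhood of a fixed horizontal $\sfd_{\rm CC}$-near-minimiser between $x$ and $y$, exploiting the growth $F_n\geq n$ on $V_x^\perp\cap\mathbb S^{d-1}$ from item b) of Proposition \ref{prop:constr_g_n} to force the curves $\gamma_n$ to remain in this neighbourhood for large $n$.
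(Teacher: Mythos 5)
Your proposal reproduces the paper's proof essentially step by step: the same candidate sequence \((F_n)_n\) obtained from Lemma \ref{lem:Norm_Borel} and Theorem \ref{thm:approx_subFinsl}, the same monotonicity of lengths giving \(\sfd_{F_n}\leq\sfd_{F_{n+1}}\leq\sfd_{\rm CC}\), and the same hard half via constant-speed near-minimisers, Arzel\`a--Ascoli, lower semicontinuity of \(\ell_{F_m}\) under uniform convergence, and monotone convergence of \(F_m\nearrow\rho\) along the limit curve. So the argument is correct in the same sense and to the same extent as the paper's.

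The one place you go beyond the paper is in flagging the compactness hypothesis needed for Arzel\`a--Ascoli; the paper is silent on this, and your concern is legitimate (the near-minimisers \(\gamma^n\) need not be horizontal, so even completeness of \(\sfd_{\rm CC}\) does not by itself confine them to a compact set). However, your first proposed resolution cannot be implemented in general: if \(F_1\) dominates a complete auxiliary Finsler metric \(G\), then \(\sfd_G\leq\sfd_{F_1}\leq\sfd_{\rm CC}\), and since all three distances induce the manifold topology, completeness of \(\sfd_G\) forces completeness of \(\sfd_{\rm CC}\) --- which the statement of the theorem does not assume. So this fix is only available in the complete case (which, to be fair, is the only case in which the theorem is used later, e.g.\ in Theorem \ref{thm:ptwse_norm_b_bis}), and even there one must still justify the existence of a complete \(G\) with \(G<\rho\) fiberwise. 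Your second suggestion, confining the \(\gamma_n\) near a horizontal near-minimiser via item b) of Proposition \ref{prop:constr_g_n}, is too vague as stated: the bound \(F_n\geq n\) on \(V_x^\perp\cap\mathbb S^{d-1}\) controls transversal displacement but does not prevent the \(\gamma_n\) from wandering horizontally out of any prescribed precompact neighbourhood, only out of \(\sfd_{F_1}\)-balls, whose precompactness is exactly what is in question. In short: your proof matches the paper's, you have correctly identified the one soft spot that both arguments share, but neither of your repairs is complete as written.
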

\begin{proof}
Define \(\rho\) as in \eqref{eq:Norm} and consider a sequence \((F_n)_n\)
of approximating Finsler metrics as in Theorem \ref{thm:approx_subFinsl}.
Let \(x,y\in\M\) be fixed. Let \(\gamma\colon[0,1]\to\M\) be a curve
joining \(x\) and \(y\) that is Lipschitz when read in charts (i.e.,
as in Definition \ref{def:horizontal_curve}). Calling \(\ell_{F_n}\) the
length functional associated to \(F_n\), it holds that
\(\ell_{F_n}(\gamma)\leq\ell_{F_{n+1}}(\gamma)\leq\ell_{\rm CC}(\gamma)\)
for every \(n\in\N\), thus by taking the infimum over \(\gamma\) we deduce
that \(\sfd_{F_n}(x,y)\leq\sfd_{F_{n+1}}(x,y)\leq\sfd_{\rm CC}(x,y)\).
Given any \(n\in\N\), by definition of \(\sfd_{F_n}\)
we find a constant-speed Lipschitz curve \(\gamma^n\colon[0,1]\to\M\),
where the target is endowed with \(\sfd_{F_n}\), such that
\begin{equation}\label{eq:choice_gamma^n}
\ell_{F_n}(\gamma^n)\leq\sfd_{F_n}(x,y)+\frac{1}{n}.
\end{equation}
Fix \(n\in\N\). The above considerations yield
\[
\ell_{F_n}(\gamma^i)\leq\ell_{F_i}(\gamma^i)\overset{\eqref{eq:choice_gamma^n}}\leq
\sfd_{F_i}(x,y)+\frac{1}{i}\leq\sfd_{\rm CC}(x,y)+1\quad\text{ for every }i\geq n.
\]
This shows that \((\gamma^i)_{i\geq n}\) is an equiLipschitz family
of curves (with respect to \(\sfd_{F_n}\)). By combining Arzel\`{a}--Ascoli theorem
with a diagonalisation argument, we thus obtain a curve \(\gamma\colon[0,1]\to\M\),
which is Lipschitz with respect to each distance \(\sfd_{F_n}\), such that
(up to a not relabelled subsequence)
\begin{equation}\label{eq:unif_conv_gamma}
\lim_{i\to\infty}\sup_{t\in[0,1]}\sfd_{F_n}(\gamma^i_t,\gamma_t)=0
\quad\text{ for every }n\in\N.
\end{equation}
Since \(\ell_{F_n}\) is lower semicontinuous under
uniform convergence of curves, we deduce from \eqref{eq:unif_conv_gamma} that
\begin{equation}\label{eq:ineq_ell_F_n}
\ell_{F_n}(\gamma)\leq\varliminf_{i\to\infty}\ell_{F_n}(\gamma^i)
\leq\varliminf_{i\to\infty}\ell_{F_i}(\gamma^i)
\overset{\eqref{eq:choice_gamma^n}}\leq\lim_{i\to\infty}\sfd_{F_i}(x,y)
\quad\text{ for every }n\in\N.
\end{equation}
Therefore, by using the monotone convergence theorem we obtain that
\[
\int_0^1\rho(\gamma_t,\dot\gamma_t)\,\d t
=\lim_{n\to\infty}\int_0^1 F_n(\gamma_t,\dot\gamma_t)\,\d t
=\lim_{n\to\infty}\ell_{F_n}(\gamma)
\overset{\eqref{eq:ineq_ell_F_n}}\leq\lim_{i\to\infty}\sfd_{F_i}(x,y)
\leq\sfd_{\rm CC}(x,y)<+\infty,
\]
which implies that the curve \(\gamma\) is horizontal and satisfies
\(\ell_{\rm CC}(\gamma)=\sfd_{\rm CC}(x,y)=\lim_n\sfd_{F_n}(x,y)\).
\end{proof}
Although not strictly needed for our purposes, let us point
out an immediate well-known consequence (already proven in \cite{LD10})
of the previous theorem.
\begin{corollary}\label{cor:approx_sR_with_Riem}
Let \((\M,\E,\sigma,\psi)\) be a sub-Riemannian manifold.
Then there exists a sequence \((g_n)_n\) of Riemannian metrics on \(\M\)
such that \(\sfd_{g_n}(x,y)\nearrow\sfd_{\rm CC}(x,y)\) holds
for every \(x,y\in\M\).
\end{corollary}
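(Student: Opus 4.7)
The plan is to argue almost verbatim as in Theorem \ref{thm:approx_sR_with_Riem}, with the Finsler approximation replaced by a Riemannian one. The key point that makes this possible is that on a sub-Riemannian manifold, the generalised metric $\rho\colon\T\M\to[0,+\infty]$ defined in \eqref{eq:Norm} has the property that $\rho_x|_{\mathcal{D}_x}$ is a Hilbert norm for every $x\in\M$ (since each $\|\cdot\|_x$ is induced by a scalar product, as recalled after \eqref{eq:horizontal_norm}). Thus the strengthened version of the approximation theorem described in Remark \ref{rmk:approx_subRiem} applies, and furnishes a sequence of Riemannian metrics $(g_n)_n$ on $\M$ satisfying
\[
\sqrt{(g_n)_x(v,v)}\nearrow \rho(x,v)\quad\text{for every }x\in\M\text{ and }v\in\T_x\M.
\]

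With this sequence in hand, I would simply rerun the proof of Theorem \ref{thm:approx_sR_with_Riem}, substituting the Finsler metric $F_n$ by the Riemannian metric $F_n(x,v):=\sqrt{(g_n)_x(v,v)}$ throughout. Concretely: fix $x,y\in\M$. From the pointwise monotone convergence of $F_n$ to $\rho$, for any locally Lipschitz curve $\gamma$ one has $\ell_{F_n}(\gamma)\leq \ell_{F_{n+1}}(\gamma)\leq\ell_{\rm CC}(\gamma)$, and passing to the infimum yields $\sfd_{g_n}(x,y)\leq\sfd_{g_{n+1}}(x,y)\leq \sfd_{\rm CC}(x,y)$. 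For the converse inequality, pick near-minimising constant-speed curves $\gamma^n\colon[0,1]\to\M$ for $\sfd_{g_n}$ satisfying $\ell_{F_n}(\gamma^n)\leq\sfd_{g_n}(x,y)+1/n$; the family $(\gamma^i)_{i\geq n}$ is equi-Lipschitz with respect to $\sfd_{g_n}$.

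By Arzelà--Ascoli combined with a diagonal extraction, one obtains a limit curve $\gamma\colon[0,1]\to\M$ which is Lipschitz with respect to each $\sfd_{g_n}$, and along which $\gamma^i\to\gamma$ uniformly in each $\sfd_{g_n}$. Lower semicontinuity of $\ell_{F_n}$ under uniform convergence gives $\ell_{F_n}(\gamma)\leq\varliminf_i \ell_{F_i}(\gamma^i)\leq\lim_i\sfd_{g_i}(x,y)$ for each $n$. Finally, the monotone convergence theorem yields
\[
\int_0^1 \rho(\gamma_t,\dot\gamma_t)\,\d t=\lim_{n\to\infty}\ell_{F_n}(\gamma)\leq\lim_{i\to\infty}\sfd_{g_i}(x,y)\leq\sfd_{\rm CC}(x,y)<+\infty,
\]
which forces $\gamma$ to be horizontal and of sub-Finsler length at most $\lim_i\sfd_{g_i}(x,y)$, hence $\sfd_{\rm CC}(x,y)\leq \lim_i\sfd_{g_i}(x,y)$.

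There is no genuine obstacle: the only substantive step is the appeal to Remark \ref{rmk:approx_subRiem} to get Riemannian rather than merely Finsler approximations of $\rho$, and from there the proof is identical to the one just carried out. One could alternatively observe that this corollary has already been proved by Le Donne in \cite{LD10}, as is remarked in the paper.
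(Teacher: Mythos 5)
Your proposal is correct and follows essentially the same route as the paper: the authors also deduce the corollary by invoking Remark \ref{rmk:approx_subRiem} to upgrade the approximating Finsler metrics of Theorem \ref{thm:approx_subFinsl} to Riemannian ones, and then running the argument of Theorem \ref{thm:approx_sR_with_Riem} unchanged. Your write-up merely spells out the details that the paper leaves implicit in its one-line proof.
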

\begin{proof}
It follows from Theorem \ref{thm:approx_sR_with_Riem} by taking
Remark \ref{rmk:approx_subRiem} into account.
\end{proof}

We shall also need the ensuing approximation result for real-valued Lipschitz
functions that are defined on a Finsler manifold.
\begin{lemma}\label{lem:approx_Lip_with_Cinfty}
Let \((\M,F)\) be a Finsler manifold. Let \(f\in\LIP_{bs}(\M)\) be given.
Then there exists a sequence \((f_n)_n\subseteq C^1_{bs}(\M)\) with
\(\sup_n\Lip(f_n)\leq\Lip(f)\) such that \(f_n\to f\) uniformly on \(\M\).
\end{lemma}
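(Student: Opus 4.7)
The plan is a partition-of-unity mollification in charts where the Finsler norm is nearly Euclidean, combined with a cancellation among the partition-of-unity derivatives that lets the Lipschitz constant be controlled sharply.

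Fix $\delta>0$. Since $K\coloneqq\mathrm{supp}(f)$ is compact and $F$ is continuous and fiberwise a norm, I would cover a neighborhood of $K$ by finitely many relatively compact chart domains $\phi_j\colon U_j\to\Omega_j\subseteq\R^n$ ($j=1,\dots,N$) with the property that the push-forward of $F$ to $\Omega_j\times\R^n$ is $(1+\delta)$-biLipschitz equivalent to the Euclidean norm $|\cdot|$ on each fiber. Next I would pick a smooth partition of unity $(\chi_j)_{j=1}^N$ subordinate to $(U_j)$ with $\sum_j\chi_j\equiv 1$ on an open neighborhood $W$ containing a fixed small enlargement of $K$.

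For small $\eps>0$ and each $j$, convolve $f\circ\phi_j^{-1}$ (extended by zero) with a standard Euclidean mollifier $\rho_\eps$ and transport the smooth result back to $U_j$, obtaining $h_j^\eps\in C^\infty(U_j)$. Set
\[
\tilde f_\eps\;\coloneqq\;\sum_{j=1}^N\chi_j\,h_j^\eps\;\in\;C^\infty_c(\M).
\]
Standard properties of mollification give $h_j^\eps\to f$ uniformly on $\mathrm{supp}(\chi_j)$, hence $\tilde f_\eps\to f$ uniformly on $\M$. For $\eps$ small, each $h_j^\eps$ is supported in a slight enlargement of $K\cap U_j$ contained in $W$, so $\tilde f_\eps$ is supported in $W$ and in particular $\sum_j\d\chi_j\equiv 0$ on $\mathrm{supp}(\tilde f_\eps)$. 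Moreover, since $f\circ\phi_j^{-1}$ is $(1+\delta)\Lip(f)$-Lipschitz with respect to $|\cdot|$, convolution preserves the Euclidean Lipschitz bound, so the push-forward $F_j^*(\d h_j^\eps)\leq(1+\delta)^2\Lip(f)$ pointwise on $\Omega_j$.

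The Lipschitz estimate for $\tilde f_\eps$ then exploits this vanishing. On $\mathrm{supp}(\tilde f_\eps)$ one may write
\[
\d\tilde f_\eps\;=\;\sum_j\chi_j\,\d h_j^\eps\;+\;\sum_j(h_j^\eps-f)\,\d\chi_j,
\]
where the first sum is a convex combination of one-forms of dual $F^*$-norm at most $(1+\delta)^2\Lip(f)$ and the second has $F^*$-norm at most $\max_j\|h_j^\eps-f\|_\infty\cdot\sum_j F^*(\d\chi_j)$, which tends to zero as $\eps\to 0$. Hence $\Lip(\tilde f_\eps)\leq(1+\delta)^2\Lip(f)+o_\eps(1)$. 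A diagonal choice $\delta_n\to 0$ with suitable $\eps_n$, followed by the rescaling $f_n\coloneqq(1+1/n)^{-1}\tilde f_{\eps_n,\delta_n}$, produces the required sequence inside $C^\infty_c(\M)\subseteq C^1_{bs}(\M)$: boundedness of $f$ ensures uniform convergence survives the rescaling by factors tending to $1$, and $\Lip(f_n)\leq\Lip(f)$ by construction. The main obstacle is precisely this sharp Lipschitz control: the naive computation $\d(\chi_jh_j^\eps)=h_j^\eps\,\d\chi_j+\chi_j\,\d h_j^\eps$ would introduce a term proportional to $\|f\|_\infty$ times the partition-of-unity derivatives, which does not vanish in the limit. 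The identity $\sum_j\d\chi_j=0$ on $\mathrm{supp}(\tilde f_\eps)$ replaces $h_j^\eps$ by the small quantity $h_j^\eps-f$ in that term, and this cancellation is what makes the strategy close.
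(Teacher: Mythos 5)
Your route is genuinely different from the paper's. The paper's proof is two lines: it quotes the approximation theorem \cite[Theorem 2.6]{LP19}, which already supplies $g_n\in C^1_{bs}(\M)$ with $\Lip(g_n)\le\Lip(f)+1/n$ and $|g_n-f|\le 1/n$, and then performs exactly the rescaling $f_n=c_n\,g_n$ that you carry out at the end. All of the substance of your write-up goes into reproving that cited input from scratch; the partition-of-unity cancellation $\sum_j\d\chi_j=0$ on the open set where $\sum_j\chi_j\equiv 1$ is indeed the standard and correct device for avoiding the $\|f\|_\infty\cdot\sum_jF^*(\d\chi_j)$ term, and your final rescaling and uniform-convergence bookkeeping are fine.

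There is, however, one step that fails as written. You claim that a neighbourhood of $\mathrm{supp}(f)$ can be covered by charts in which the fibers of $F$ are $(1+\delta)$-biLipschitz equivalent to the \emph{Euclidean} norm. For a Riemannian metric this is true (orthonormal frame at the centre plus continuity), but for a Finsler metric it is false: already a single fiber norm $F(x_0,\cdot)$ can be at distance of order $\sqrt{n}$ from every Euclidean norm, and no choice of chart changes the fiberwise norm up to more than a linear isomorphism. The repair is to compare $F$ on a small chart around $x_0$ with the \emph{frozen} norm $\n_0\coloneqq F(x_0,\cdot)$, using continuity of $F$ and compactness of the unit sphere to get $(1+\delta)^{-1}\n_0\le F(x,\cdot)\le(1+\delta)\n_0$ throughout the chart; since convolution is an average of translates, mollification preserves the Lipschitz constant with respect to \emph{any} fixed norm on $\R^n$, so every subsequent estimate goes through with $|\cdot|$ replaced by $\n_0$. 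Two smaller points you should also make explicit: $f\circ\phi_j^{-1}$ extended by zero need not be Lipschitz on all of $\R^n$, so the Lipschitz bound for $h_j^\eps$ should only be asserted on a neighbourhood of $\phi_j(\mathrm{supp}\,\chi_j)$ for $\eps$ small (which suffices); and the passage from the pointwise bound on $F^*(\d\tilde f_\eps)$ to the global bound on $\Lip(\tilde f_\eps)$ uses that $\sfd_F$ is a length distance, i.e.\ one integrates $\d\tilde f_\eps$ along almost-minimizing curves.
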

\begin{proof}
We call \(C\coloneqq\max_\M|f|\). We know (for instance,
from \cite[Theorem 2.6]{LP19}) that for any \(n\in\N\) there
exists a function \(g_n\in C^1_{bs}(\M)\) such that \(\Lip(g_n)\leq\Lip(f)+1/n\)
and \(|g_n-f|\leq 1/n\) on \(\M\). Set \(c_n\coloneqq\Lip(f)/\big(\Lip(f)+1/n\big)\)
and \(f_n\coloneqq c_n\,g_n\). Therefore \(f_n\in C^1_{bs}(\M)\),
\(\Lip(f_n)\leq\Lip(f)\), and
\[|f_n-f|\leq|f_n-g_n|+|g_n-f|=(1-c_n)|g_n|+|g_n-f|
\leq\frac{|f|+1/n}{n\,\Lip(f)+1}+\frac{1}{n}
\leq\frac{C+1}{n\,\Lip(f)+1}+\frac{1}{n},\]
thus accordingly \(f_n\to f\) uniformly on \(\M\), as required.
\end{proof}

We are now in a position to prove a representation formula
for the pointwise norm of derivations on weighted sub-Finsler manifolds,
by combining the above two results with Proposition \ref{prop:ptwse_norm_b}.
\begin{theorem}\label{thm:ptwse_norm_b_bis}
Let \((\M,\E,\sigma,\psi)\) be a sub-Finsler manifold such that \(\sfd_{\rm CC}\) is complete.
Let \(\mu\geq 0\) be a finite Borel measure on \((\M,\sfd_{\rm CC})\).
Then there exists a countable family \(\mathscr F\subseteq C^1_c(\M)\cap\LIP(\M)\)
such that \(\Lip(f)\leq 1\) for every \(f\in\mathscr F\) and
\[|\b|=\underset{f\in\mathscr F}{\rm ess\,sup\,}\b(f)\;\;\;\mu\text{-a.e.}
\quad\text{ for every }\b\in{\rm Der}^{2,2}(\M;\mu).\]
\end{theorem}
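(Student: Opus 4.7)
\, The strategy is to begin from the representation formula provided by Proposition \ref{prop:ptwse_norm_b} and to smooth the distance-based functions that appear there by combining Theorem \ref{thm:approx_sR_with_Riem} (monotone approximation of \(\sfd_{\rm CC}\) by Finsler distances) with Lemma \ref{lem:approx_Lip_with_Cinfty} (smoothing on Finsler manifolds). Concretely, fix a countable dense set \((x_k)_k\subseteq\M\) and cutoffs \(\eta_{jk}\) as in Proposition \ref{prop:ptwse_norm_b}, and set \(h_{jk}\coloneqq(\sfd_{\rm CC}(\cdot,x_k)\wedge j)\,\eta_{jk}\). A direct product-rule computation gives \(\Lip^{\sfd_{\rm CC}}(h_{jk})\leq 1\), while Proposition \ref{prop:ptwse_norm_b} yields
\[
|\b|=\underset{j,k\in\N}{\rm ess\,sup\,}\b(h_{jk})\quad\mu\text{-a.e.}
\]
The aim is thus to exhibit a countable family \(\mathscr F\subseteq C^1_c(\M)\cap\LIP(\M)\) of \(1\)-Lipschitz functions such that each \(\b(h_{jk})\) lies \(\mu\)-a.e.\ below the corresponding essential supremum over \(\mathscr F\).

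To this end, pick Finsler metrics \((F_n)_n\) with \(\sfd_{F_n}\nearrow\sfd_{\rm CC}\) pointwise via Theorem \ref{thm:approx_sR_with_Riem}, and for every triple \((j,k,n)\) construct an intermediate function \(h_{jk}^n\coloneqq(\sfd_{F_n}(\cdot,x_k)\wedge j)\,\eta_{jk}^n\), where \(\eta_{jk}^n\) is a compactly supported \(F_n\)-Lipschitz cutoff with constant \(\leq 1/j^2\), chosen so that \(h_{jk}^n\to h_{jk}\) pointwise as \(n\to\infty\) with uniformly bounded Lipschitz constants. Each \(h_{jk}^n\) is then \(1\)-Lipschitz with respect to \(\sfd_{F_n}\), and since \(\sfd_{F_n}\leq\sfd_{\rm CC}\) this upgrades automatically to \(\sfd_{\rm CC}\)-Lipschitzianity with the same constant. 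Applying Lemma \ref{lem:approx_Lip_with_Cinfty} on the Finsler manifold \((\M,F_n)\) produces \(g_{jk,n,m}\in C^1_{bs}(\M)\) with \(\Lip^{F_n}(g_{jk,n,m})\leq 1\), hence also \(\Lip^{\sfd_{\rm CC}}(g_{jk,n,m})\leq 1\), and \(g_{jk,n,m}\to h_{jk}^n\) uniformly on \(\M\). I then take \(\mathscr F\coloneqq\{g_{jk,n,m}:j,k,n,m\in\N\}\).

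The inequality \(\underset{f\in\mathscr F}{\rm ess\,sup\,}\b(f)\leq|\b|\) holds \(\mu\)-a.e.\ directly from the definition of the pointwise norm of \(\b\). For the reverse bound, fix \(j,k\) and apply Lemma \ref{lem:der_continuity} twice: letting \(m\to\infty\) and then \(n\to\infty\) shows that, tested against \(\LIP_{bs}(\M)\)-functions, \(\b(g_{jk,n,m})\) converges to \(\b(h_{jk})\) along a diagonal subsequence; together with the uniform \(L^2\)-domination \(|\b(g_{jk,n,m})|\leq|\b|\in L^2(\mu)\), this gives weak \(L^2\)-convergence. Mazur's lemma then produces convex combinations of the \(\b(g_{jk,n(l),l})\) converging strongly, hence \(\mu\)-a.e.\ along a further subsequence, to \(\b(h_{jk})\); since each such convex combination is pointwise dominated by \(\underset{f\in\mathscr F}{\rm ess\,sup\,}\b(f)\), the limit inequality \(\b(h_{jk})\leq\underset{f\in\mathscr F}{\rm ess\,sup\,}\b(f)\) follows \(\mu\)-a.e., which combined with the earlier display yields the statement. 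I expect the main obstacle to be the explicit construction of the intermediate cutoffs \(\eta_{jk}^n\): since \(\sfd_{F_n}\) need not be complete, \(F_n\)-metric balls are not guaranteed to be manifold-compact, so one cannot simply transplant the \(\sfd_{\rm CC}\)-construction of \(\eta_{jk}\) with \(\sfd_{F_n}\) in place of \(\sfd_{\rm CC}\). One instead has to combine a distance-type \(F_n\)-Lipschitz function with a pre-chosen smooth compactly supported bump (drawn from the manifold topology) in a way that both preserves the Lipschitz bound \(\leq 1/j^2\) and delivers the pointwise convergence \(h_{jk}^n\to h_{jk}\).
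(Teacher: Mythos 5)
Your plan is essentially the paper's proof: start from Proposition \ref{prop:ptwse_norm_b}, replace \(\sfd_{\rm CC}\) by the Finsler approximants of Theorem \ref{thm:approx_sR_with_Riem}, smooth with Lemma \ref{lem:approx_Lip_with_Cinfty}, and pass to the limit via Lemma \ref{lem:der_continuity}. The one step you leave open --- the construction of the intermediate cutoffs \(\eta^n_{jk}\) --- is exactly where the paper uses a small trick that makes the \(n\)-dependence unnecessary: choose a \emph{single} family \(\eta_{jk}\), boundedly supported and Lipschitz with constant at most \(1/j^2\) \emph{with respect to} \(\sfd_{F_1}\). Since \(\sfd_{F_1}\leq\sfd_{F_n}\leq\sfd_{\rm CC}\), this one cutoff is automatically \(1/j^2\)-Lipschitz for every \(\sfd_{F_n}\) and for \(\sfd_{\rm CC}\), so \(h^n_{jk}\coloneqq(\sfd_{F_n}(\cdot,x_k)\wedge j)\,\eta_{jk}\) is \(1\)-Lipschitz with respect to \(\sfd_{F_n}\) and converges pointwise to \(h_{jk}\) simply because \(\sfd_{F_n}(\cdot,x_k)\nearrow\sfd_{\rm CC}(\cdot,x_k)\). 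No new bump is needed for each \(n\), and the non-compactness of \(\sfd_{F_n}\)-balls never enters.

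Your concluding weak-\(L^2\)/Mazur detour is valid but heavier than necessary: since \(\b(g_{jk,n,m})\leq{\sf n}(\b)\coloneqq{\rm ess\,sup}_{f\in\mathscr F}\,\b(f)\) holds \(\mu\)-a.e.\ for every index, Lemma \ref{lem:der_continuity} applied to a diagonal sequence already gives \(\int\varphi\,\b(h_{jk})\,\d\mu\leq\int\varphi\,{\sf n}(\b)\,\d\mu\) for every nonnegative \(\varphi\in\LIP_{bs}(\M)\), which yields \(\b(h_{jk})\leq{\sf n}(\b)\) \(\mu\)-a.e.\ directly; combined with Proposition \ref{prop:ptwse_norm_b} (the paper runs this through a Borel partition and an \(\eps\)), this finishes the argument. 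With the cutoff issue repaired as above, your proof is correct and coincides with the paper's.
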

\begin{proof}
Fix a dense sequence \((x_k)_k\subseteq\M\). Theorem \ref{thm:approx_sR_with_Riem}
grants the existence of a sequence \((F_i)_i\) of Finsler metrics on \(\M\) such
that \(\sfd_{F_i}\nearrow\sfd_{\rm CC}\) pointwise on \(\M\times\M\).
Choose a family \(\{\eta_{jk}\}_{j,k\in\N}\) of cut-off functions with these properties:
given \(j,k\in\N\), we have that \(\eta_{jk}\colon\M\to[0,1-1/j]\) is a
boundedly-supported Lipschitz function (with respect to \(\sfd_{F_1}\))
such that \(\eta_{jk}=1-1/j\) on \(B_j^{\sfd_{\rm CC}}(x_k)\) and
\(\Lip^{\sfd_{F_1}}(\eta_{jk})\leq 1/j^2\). Observe that for any
\(i,j,k\in\N\) it holds that
\[\begin{split}
\Lip^{\sfd_{F_i}}\big((\sfd_{F_i}(\cdot,x_k)\wedge j)\,\eta_{jk}\big)&\leq
\Lip^{\sfd_{F_i}}\big(\sfd_{F_i}(\cdot,x_k)\wedge j\big)\,\max_\M|\eta_{jk}|+
\Lip^{\sfd_{F_i}}(\eta_{jk})\max_{\M}\big|\sfd_{F_i}(\cdot,x_k)\wedge j\big|\\
&\leq 1-\frac{1}{j}+j\,\Lip^{\sfd_{F_1}}(\eta_{jk})\leq 1.
\end{split}\]
Therefore, Lemma \ref{lem:approx_Lip_with_Cinfty}
guarantees the existence of a function
\(f_{ijk}\in C^1_c(\M)\cap\LIP^{\sfd_{F_i}}(\M)\)
that satisfies \(\Lip^{\sfd_{F_i}}(f_{ijk})\leq 1\) and
\[
\big|(\sfd_{F_i}(x,x_k)\wedge j)\,\eta_{jk}(x)-f_{ijk}(x)\big|\leq\frac{1}{i}
\quad\text{ for every }x\in\M.
\]
Define \(\mathscr F\coloneqq\{f_{ijk}\,:\,i,j,k\in\N\}\).
Note that \(\mathscr F\subseteq C^1_c(\M)\cap\LIP^{\sfd_{\rm CC}}(\M)\)
and \(\sup_{f\in\mathscr F}\Lip^{\sfd_{\rm CC}}(f)\leq 1\), as
\(\Lip^{\sfd_{\rm CC}}(f_{ijk})\leq\Lip^{\sfd_{F_i}}(f_{ijk})\leq 1\)
for all \(i,j,k\in\N\). Now let us call
\begin{equation}\label{eq:ptwse_norm_b_bis}
{\sf n}(\b)\coloneqq\underset{f\in\mathscr F}{\rm ess\,sup\,}\b(f)
\quad\text{ for every }\b\in{\rm Der}^{2,2}(\M;\mu).
\end{equation}
Since \(\b(f)\leq|\b|\,\Lip^{\sfd_{\rm CC}}(f)\leq|\b|\) holds \(\mu\)-a.e.\ for all
\(f\in\mathscr F\), we deduce that \({\sf n}(\b)\leq|\b|\) holds \(\mu\)-a.e.\ as well.
To prove the converse inequality, fix \(\eps>0\). Proposition \ref{prop:ptwse_norm_b}
grants the existence of a Borel partition \((A_{jk})_{j,k}\)
of \(\M\) such that
\(\sum_{j,k}\nchi_{A_{jk}}\,\b\big((\sfd_{\rm CC}(\cdot,x_k)\wedge j)\,\eta_{jk}\big)\geq|\b|-\eps\)
in the \(\mu\)-a.e.\ sense. Fix \(j,k\in\N\) and choose a sequence
\((\varphi_n)_n\subseteq\LIP^{\sfd_{\rm CC}}_{bs}(\M)\) with \(\varphi_n\geq 0\)
converging to \(\nchi_{A_{jk}}\) strongly in \(L^2(\mu)\).
Given that \(\lim_i f_{ijk}(x)=(\sfd_{\rm CC}(x,x_k)\wedge j)\,\eta_{jk}(x)\) for
every \(x\in\M\), Lemma \ref{lem:der_continuity} yields
\[\int\varphi_n\,\b\big((\sfd_{\rm CC}(\cdot,x_k)\wedge j)\,\eta_{jk}\big)\,\d\mu=
\lim_{i\to\infty}\int\varphi_n\,\b(f_{ijk})\,\d\mu
\overset{\eqref{eq:ptwse_norm_b_bis}}\leq\int\varphi_n\,{\sf n}(\b)\,\d\mu
\quad\text{ for every }n\in\N,\]
thus by letting \(n\to\infty\) we deduce that
\[\int_{A_{jk}}|\b|\,\d\mu-\eps\,\mu(A_{jk})\leq
\int_{A_{jk}}\b\big((\sfd_{\rm CC}(\cdot,x_k)\wedge j)\,\eta_{jk}\big)\,\d\mu
\leq\int_{A_{jk}}{\sf n}(\b)\,\d\mu.\]
By summing over \(j,k\in\N\) we get that
\(\int|\b|\,\d\mu-\eps\,\mu(\M)\leq\int{\sf n}(\b)\,\d\mu\).
By letting \(\eps\searrow 0\) we finally conclude that
\(\int|\b|\,\d\mu\leq\int{\sf n}(\b)\,\d\mu\), which forces
the \(\mu\)-a.e.\ equality \(|\b|={\sf n}(\b)\), as desired.
\end{proof}
\subsection{Embedding theorem and its consequences}
This subsection is devoted to our main result, namely
Theorem \ref{thm:embedding_intro},
which states that the space of derivations \({\rm Der}^{2,2}(\M;\mu)\)
associated with a weighted sub-Finsler manifold \(\M\)
can be isometrically embedded into the space \(L^2(\H\M;\mu)\)
of all `geometric' \(2\)-integrable sections of the horizontal
bundle \(\H\M\). For the reader's convenience, we also recall here the statement.
\begin{theorem}[Embedding theorem]\label{thm:embedding}
Let \((\M,\E,\sigma,\psi)\) be a sub-Finsler manifold with \(\sfd_{\rm CC}\) complete.
Let \(\mu\) be a finite, non-negative Borel measure on \((\M,\sfd_{\rm CC})\).
Then there exists a unique linear operator
\({\rm I}\colon{\rm Der}^{2,2}(\M;\mu)\to L^2(\H\M;\mu)\) such that
\begin{equation}\label{eq:I(b)}
\d_\H f(x)\big[{\rm I}(\b)(x)\big]=\b(f)(x)
\quad\text{ holds for }\mu\text{-a.e.\ }x\in\M,
\end{equation}
for every \(\b\in{\rm Der}^{2,2}(\M;\mu)\) and \(f\in C^1_c(\M)\cap\LIP(\M)\).
Moreover, the operator \(\rm I\) satisfies
\begin{equation}\label{eq:I_isometry}
{\big\|{\rm I}(\b)(x)\big\|}_x=|\b|(x)\quad\text{ for }\mu\text{-a.e.\ }x\in\M,
\end{equation}
for every \(\b\in{\rm Der}^{2,2}(\M;\mu)\).
\end{theorem}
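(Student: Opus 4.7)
The strategy is to build \({\rm I}(\b)\) from the transport measure on curves supplied by the superposition principle, and to verify the isometry via the representation formula of Theorem \ref{thm:ptwse_norm_b_bis}; the scheme closely parallels \cite[Theorem~6.2]{DMGPS18}.

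First I would apply Theorem \ref{thm:superposition_principle} to obtain a finite Borel measure \(\ppi\) on \(C([0,1],\M)\), concentrated on non-constant constant-speed Lipschitz curves. By Proposition \ref{prop:horizontal_vs_Lipschitz} such curves are horizontal \(\ppi\)-a.e., so \({\sf Der}(\gamma,t)\in\H\M\) for a.e.\ \(t\). Using the Borelness granted by Lemma \ref{lem:Norm_Borel}, I introduce the finite Borel measure \(\boldsymbol\mu_\b\) on \(\H\M\) characterised by
\[\int_{\H\M}\phi\,\d\boldsymbol\mu_\b=\int\!\!\int_0^1\phi\bigl({\sf Der}(\gamma,t)\bigr)\,\d t\,\d\ppi(\gamma).\]
Testing \eqref{eq:superposition_principle_2} with \(g=\nchi_A\) (and using that \(|\dot\gamma_t|\) is a positive constant for \(\ppi\)-a.e.\ \(\gamma\)) shows that the projection \(\pi_\#\boldsymbol\mu_\b\) is absolutely continuous with respect to \(\mu\). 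I then disintegrate \(\boldsymbol\mu_\b=\boldsymbol\mu_\b^x\otimes\mu\) with \(\boldsymbol\mu_\b^x\) supported on \(\mathcal D_x\), and set
\[{\rm I}(\b)(x)\coloneqq\int_{\mathcal D_x}v\,\d\boldsymbol\mu_\b^x(v)\in\mathcal D_x.\]

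To verify \eqref{eq:I(b)} I use that for \(f\in C^1_c(\M)\cap\LIP(\M)\) the chain rule gives \((f\circ\gamma)'_t=\d_\H f(\gamma_t)[\dot\gamma_t]\) for \(\ppi\)-a.e.\ \(\gamma\) and a.e.\ \(t\); combined with \eqref{eq:superposition_principle_1}, the fiberwise linearity of \(\d_\H f(x)\) yields
\[\int g\,\b(f)\,\d\mu=\int\!\!\int_0^1 g(\gamma_t)\,\d_\H f(\gamma_t)[\dot\gamma_t]\,\d t\,\d\ppi=\int g(x)\,\d_\H f(x)\bigl[{\rm I}(\b)(x)\bigr]\,\d\mu(x)\]
for every \(g\in\LIP_{bs}(\M)\), forcing the \(\mu\)-a.e.\ identity \eqref{eq:I(b)}. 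Uniqueness of \(\rm I\) --- whence its linearity, since a priori the construction depends on the choice of \(\ppi\) --- then follows from Lemma \ref{lem:grad_fiberwise_dense}: the differentials \(\{\d_\H f(x):f\in\mathscr C\}\) are dense in \(\mathcal D_x^*\), so \eqref{eq:I(b)} pins down \({\rm I}(\b)(x)\in\mathcal D_x\). The isometry \eqref{eq:I_isometry} splits into two inequalities. By Jensen applied inside the barycenter and \eqref{eq:superposition_principle_2},
\[\int g\,\|{\rm I}(\b)\|_\cdot\,\d\mu\leq\int g(x)\,\|v\|_x\,\d\boldsymbol\mu_\b(x,v)=\int g\,|\b|\,\d\mu,\]
giving \(\|{\rm I}(\b)\|_x\leq|\b|(x)\) \(\mu\)-a.e.; for the converse I invoke Theorem \ref{thm:ptwse_norm_b_bis} to pick a countable family \(\mathscr F\subseteq C^1_c(\M)\cap\LIP(\M)\) of \(1\)-Lipschitz functions realising \(|\b|=\mathrm{ess\,sup}_{f\in\mathscr F}\b(f)\), and combine \eqref{eq:I(b)} with \eqref{eq:estimate_nabla_H} to get \(\b(f)(x)\leq\|\d_\H f(x)\|_x^*\,\|{\rm I}(\b)(x)\|_x\leq\|{\rm I}(\b)(x)\|_x\) for every \(f\in\mathscr F\); taking the essential supremum over \(\mathscr F\) closes the argument.

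I expect the main obstacle to lie in rigorously producing \({\rm I}(\b)\) as a bona fide Borel section of \(\H\M\): the horizontal distribution has varying rank, so the disintegration and the fiberwise barycenter must be handled carefully in the continuous local frames of Theorem \ref{thm:rank_vary_distr}, and one must check that the barycenter stays in \(\mathcal D_x\) even across the set where the rank jumps (this is where the convexity of the fibers and the horizontality of \({\sf Der}(\gamma,t)\) play their role). Once the section is in hand, all remaining verifications are transparent consequences of the superposition principle and of Theorem \ref{thm:ptwse_norm_b_bis} --- which is precisely why the bulk of the paper is devoted to establishing the latter.
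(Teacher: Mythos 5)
Your proposal is correct and follows essentially the same route as the paper: superposition principle, pushforward/disintegration to define \({\rm I}(\b)\) as a fiberwise barycenter, uniqueness via Lemma \ref{lem:grad_fiberwise_dense}, and the isometry via Jensen's inequality in one direction and Theorem \ref{thm:ptwse_norm_b_bis} in the other. The only cosmetic difference is that you disintegrate the curve measure directly with respect to \(\mu\) (absorbing the density into the conditional measures), whereas the paper disintegrates with respect to \(\nu={\rm e}_*\hat\ppi\) and multiplies by \(\frac{\d\nu}{\d\mu}\); also, the Borel-measurability of the section, which you flag as the main obstacle, is dispatched in the paper by testing against local coordinate frames of differentials.
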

\begin{proof} We divide the proof into several steps:\\
{\color{blue}\textsc{Borel regularity.}} We aim to prove that any section
\({\rm I}(\b)\) of \(\H\M\) satisfying \eqref{eq:I(b)} is (equivalent to)
a Borel section. Given any \(\bar x\in\M\), we can find an open neighbourhood
\(\Omega\) of \(\bar x\) and some functions \(f_1,\ldots,f_n\in C^\infty_c(\M)\)
such that \(\d_x f_1,\ldots,\d_x f_n\) is a basis of \(\T_x^*\M\) for
all \(x\in\Omega\). Since each function
\(\Omega\ni x\mapsto\d_x f_i\big[{\rm I}(\b)(x)\big]\) is equivalent
to a Borel function by \eqref{eq:horizontal_differential} and
\eqref{eq:I(b)}, we deduce that \({\rm I}(\b)\) is equivalent to a Borel section
of \(\H\M\) on \(\Omega\). By Lindel\"{o}f lemma we can cover \(\M\) with
countably many sets \(\Omega\) with this property, whence \({\rm I}(\b)\) is
equivalent to a Borel section of \(\H\M\).\\
{\color{blue}\textsc{Integrability.}} The property \eqref{eq:I_isometry}
ensures that each Borel section \({\rm I}(\b)\) belongs to \(L^2(\H\M;\mu)\),
since \(|\b|\in L^2(\mu)\) by assumption.\\
{\color{blue}\textsc{Uniqueness.}} Fix \(\b\in{\rm Der}^{2,2}(\M;\mu)\) and
pick \(\mathscr C\subseteq C^1_c(\M)\cap\LIP(\M)\)
as in Lemma \ref{lem:grad_fiberwise_dense}. Then by writing \eqref{eq:I(b)}
for every function \(f\in\mathscr C\) we deduce that the element
\({\rm I}(\b)(x)\in\mathcal D_x\) is uniquely determined for
\(\mu\)-a.e.\ \(x\in\M\), thus the operator \(\rm I\) is unique.\\
{\color{blue}\textsc{Linearity.}} Let \(\b,\b'\in{\rm Der}^{2,2}(\M;\mu)\)
and \(\lambda,\lambda'\in\R\) be given. Then \eqref{eq:I(b)} ensures that
\[\begin{split}
\d_\H f(x)\big[\lambda\,{\rm I}(\b)(x)+\lambda'\,{\rm I}(\b')(x)\big]
&=\lambda\,\d_\H f(x)\big[{\rm I}(\b)(x)\big]
+\lambda'\,\d_\H f(x)\big[{\rm I}(\b')(x)\big]\\
&=\lambda\,\b(f)(x)+\lambda'\,\b'(f)(x)
=(\lambda\,\b+\lambda'\,\b')(f)(x)
\end{split}\]
for every \(f\in C^1_c(\M)\cap\LIP(\M)\) and \(\mu\)-a.e.\ \(x\in\M\).
By uniqueness, we conclude that \(\rm I\) is linear.\\
{\color{blue}\textsc{Existence.}} Let \(\b\in{\rm Der}^{2,2}(\M;\mu)\) be fixed.
Consider its associated measure \(\ppi\) as in Theorem
\ref{thm:superposition_principle}. Define the measure
\(\hat\ppi\coloneqq\ppi\otimes\mathcal L_1\) on 
\(C\big([0,1],\M\big)\times[0,1]\), where \(\mathcal L_1\) stands for the
restriction of the Lebesgue measure \(\mathcal L^1\) to the interval \([0,1]\).
The evaluation map \({\rm e}\colon C\big([0,1],\M\big)\times[0,1]\to\M\), i.e.,
\[{\rm e}(\gamma,t)\coloneqq\gamma_t\quad\text{ for every }
\gamma\in C\big([0,1],\M\big)\text{ and }t\in[0,1],\]
is continuous. Therefore, it makes sense to consider the finite Borel measure
\(\nu\coloneqq{\rm e}_*\hat\ppi\) on \(\M\). An application of the
disintegration theorem \cite[Theorem 5.3.1]{AmbrosioGigliSavare08} provides us
with a weakly measurable family \(\{\hat\ppi_x\}_{x\in\M}\) of Borel probability
measures on \(C\big([0,1],\M\big)\times[0,1]\) such that
\begin{subequations}\begin{align}
\hat\ppi_x\text{ is concentrated on }{\rm e}^{-1}(\{x\})&
\quad\text{ for }\nu\text{-a.e.\ }x\in\M,\label{eq:disint_1}\\
\int\Psi(\gamma,t)\,\d\hat\ppi(\gamma,t)=
\int\bigg(\int\Psi(\gamma,t)\,\d\hat\ppi_x(\gamma,t)\bigg)\d\nu(x)&
\quad\text{ for every }\Psi\in L^1(\hat\ppi).\label{eq:disint_2}
\end{align}\end{subequations}
Since \(\ppi\)-a.e.\ curve \(\gamma\) is horizontal by Proposition
\ref{prop:horizontal_vs_Lipschitz}, we have that \(\dot\gamma_t\in\mathcal D_{\gamma_t}\)
holds for \(\hat\ppi\)-a.e.\ \((\gamma,t)\) by Fubini theorem. Consider
the Borel map \({\sf Der}\colon C\big([0,1],\M\big)\times[0,1]\to\T\M\) defined
in \eqref{eq:Der}. Then we know from \eqref{eq:disint_1} that for
\(\nu\)-a.e.\ \(x\in\M\) the measure
\(\mathfrak n_x\coloneqq{\sf Der}_*\hat\ppi_x\) can be viewed as a
Borel probability measure on \(\mathcal D_x\). Therefore, for any function
\(g\in\LIP_{bs}(\M)\) we have that
\begin{equation}\label{eq:superposition_equiv_2}\begin{split}
\int g\,|\b|\,\d\mu&\overset{\eqref{eq:superposition_principle_2}}=
\int\!\!\!\int_0^1 g(\gamma_t)\,|\dot\gamma_t|\,\d t\,\d\ppi(\gamma)
=\int g\big({\rm e}(\gamma,t)\big)\,
\rho\big({\sf Der}(\gamma,t)\big)\,\d\hat\ppi(\gamma,t)\\
&\overset{\eqref{eq:disint_2}}=
\int\bigg(\int g\big({\rm e}(\gamma,t)\big)\,
\rho\big({\sf Der}(\gamma,t)\big)\,\d\hat\ppi_x(\gamma,t)\bigg)\,\d\nu(x)\\
&\overset{\phantom{\eqref{eq:superposition_principle_2}}}=
\int g(x)\,\bigg(\int_{\mathcal D_x}{\|v\|}_x\,\d\mathfrak n_x(v)\bigg)\,\d\nu(x),
\end{split}\end{equation}
where the function \(\rho\) is defined as in \eqref{eq:Norm}.
Let us set \(\Phi(x)\coloneqq\int_{\mathcal D_x}{\|v\|}_x\,\d\mathfrak n_x(v)\)
for \(\nu\)-a.e.\ \(x\in\M\). The measurability of \(\Phi\) is granted by the
fact that \(\{\hat\ppi_x\}_{x\in\M}\) is a weakly measurable family of measures.
Moreover, by the arbitrariness of \(g\in\LIP_{bs}(\M)\) we deduce from
\eqref{eq:superposition_equiv_2} that \(|\b|\mu=\Phi\nu\). In particular
\(\Phi\in L^1(\nu)\), which implies that
\begin{equation}\label{eq:Phi_finite_ae}
\int_{\mathcal D_x}{\|v\|}_x\,\d\mathfrak n_x(v)<+\infty
\quad\text{ for }\nu\text{-a.e.\ }x\in\M.
\end{equation}
Given that \(\ppi\) is concentrated on non-constant Lipschitz curves having constant
speed, we also have that \(\dot\gamma_t\neq 0\) for \(\hat\ppi\)-a.e.\ \((\gamma,t)\),
or equivalently that \(\rho\circ{\sf Der}>0\) in the \(\hat\ppi\)-a.e.\ sense.
Hence \eqref{eq:superposition_equiv_2} ensures that
\(\Phi(x)=\int\rho\circ{\sf Der}\,\d\hat\pi_x>0\) holds for \(\nu\)-a.e.\ point
\(x\in\M\), which together with the identity \(|\b|\mu=\Phi\nu\) imply that \(\nu\ll\mu\).
The Bochner integral \(\int_{\mathcal D_x}v\,\d\mathfrak n_x(v)\) is
well-posed for \(\nu\)-a.e.\ point \(x\in\M\) by \eqref{eq:Phi_finite_ae},
therefore it makes sense to define
\[{\rm I}(\b)(x)\coloneqq\frac{\d\nu}{\d\mu}(x)\int_{\mathcal D_x}v\,\d\mathfrak n_x(v)
\in\mathcal D_x\quad\text{ for }\mu\text{-a.e.\ }x\in\M,\]
where \(\frac{\d\nu}{\d\mu}\) stands for the Radon--Nikod\'{y}m
derivative of \(\nu\) with respect to \(\mu\). Now fix \(g\in\LIP_{bs}(\M)\)
and \(f\in C^1_c(\M)\cap\LIP(\M)\). We call \(\d f\colon\T\M\to\R\)
the smooth map \((x,v)\mapsto\d_x f[v]\). Therefore
\[\begin{split}
\int g\,\b(f)\,\d\mu&\overset{\eqref{eq:superposition_principle_1}}=
\int\!\!\!\int_0^1 g(\gamma_t)\,(f\circ\gamma)'_t\,\d t\,\d\ppi(\gamma)
=\int g\big({\rm e}(\gamma,t)\big)\,\d f
\big({\sf Der}(\gamma,t)\big)\,\d\hat\ppi(\gamma,t)\\
&\overset{\eqref{eq:disint_2}}=
\int\bigg(\int g\big({\rm e}(\gamma,t)\big)\,\d f
\big({\sf Der}(\gamma,t)\big)\,\d\hat\ppi_x(\gamma,t)\bigg)\,\d\nu(x)\\
&\overset{\phantom{\eqref{eq:superposition_principle_2}}}=
\int g(x)\,\bigg(\int_{\mathcal D_x}\d_x f[v]\,\d\mathfrak n_x(v)\bigg)\,\d\nu(x).
\end{split}\]
Since \(g\in\LIP_{bs}(\M)\) is arbitrary, we deduce that
\(\b(f)(x)=\frac{\d\nu}{\d\mu}(x)\int_{\mathcal D_x}\d_x f[v]\,\d\mathfrak n_x(v)\)
holds for \(\mu\)-a.e.\ point \(x\in\M\). Being the map
\(\d_x f|_{\mathcal D_x}\colon\mathcal D_x\to\R\) linear and continuous,
we conclude that
\[\begin{split}
\b(f)(x)&=\frac{\d\nu}{\d\mu}(x)\int_{\mathcal D_x}\d_x f[v]\,\d\mathfrak n_x(v)
=\d_x f\bigg[\frac{\d\nu}{\d\mu}(x)\int_{\mathcal D_x}v\,\d\mathfrak n_x(v)\bigg]
=\d_x f\big[{\rm I}(\b)(x)\big]\\
&=\d_\H f(x)\big[{\rm I}(\b)(x)\big]
\end{split}\]
is satisfied for \(\mu\)-a.e.\ \(x\in\M\), thus proving \eqref{eq:I(b)}.\\
{\color{blue}\textsc{Isometry.}}
Let \(\b\in{\rm Der}^{2,2}(\M;\mu)\) be fixed.
We deduce from the \(\mu\)-a.e.\ identity
\(|\b|=\Phi\frac{\d\nu}{\d\mu}\) that
\[{\big\|{\rm I}(\b)(x)\big\|}_x=
\frac{\d\nu}{\d\mu}(x)\,{\left\|\int_{\mathcal D_x}v\,\d\mathfrak n_x(v)\right\|}_x
\leq\frac{\d\nu}{\d\mu}(x)\int_{\mathcal D_x}{\|v\|}_x\,\d\mathfrak n_x(v)
=|\b|(x)\quad\text{ for }\mu\text{-a.e.\ }x\in\M.\]
In order to prove the converse inequality, pick a countable family
\(\mathscr F\subseteq C^1_c(\M)\cap\LIP(\M)\) as in Theorem
\ref{thm:ptwse_norm_b_bis}. Therefore, for any \(f\in\mathscr F\) it holds that
\[\b(f)(x)\overset{\eqref{eq:I(b)}}=\d_\H f(x)\big[{\rm I}(\b)(x)\big]
\leq{\big\|\d_\H f(x)\big\|}_x^*\,{\big\|{\rm I}(\b)(x)\big\|}_x
\overset{\eqref{eq:estimate_nabla_H}}\leq\Lip(f)\,{\big\|{\rm I}(\b)(x)\big\|}_x
\leq{\big\|{\rm I}(\b)(x)\big\|}_x\]
for \(\mu\)-a.e.\ \(x\in\M\), whence
\(|\b|(x)=\big({\rm ess\,sup}_{f\in\mathscr F}\,\b(f)\big)(x)
\leq{\big\|{\rm I}(\b)(x)\big\|}_x\) holds for \(\mu\)-a.e.\ point \(x\in\M\).
This completes the proof of \eqref{eq:I_isometry}.
\end{proof}
Finally, we conclude by expounding how to deduce from Theorem
\ref{thm:embedding} that all sub-Riemannian manifolds are universally
infinitesimally Hilbertian. This is the content of the following
result, which has been already stated in Theorem \ref{thm:uiH_intro}.
\begin{theorem}[Infinitesimal Hilbertianity of sub-Riemannian manifolds]\label{thm:uiH}
Let \((\M,\E,\sigma,\psi)\) be a sub-Riemannian manifold with \(\sfd_{\rm CC}\)
complete. Let \(\mu\) be a non-negative Radon measure on \((\M,\sfd_{\rm CC})\).
Then the metric measure space \((\M,\sfd_{\rm CC},\mu)\) is infinitesimally
Hilbertian.
\end{theorem}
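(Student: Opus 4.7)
The plan is to reduce the claim to Proposition \ref{prop:suff_cond_iH}: it suffices to establish the pointwise parallelogram identity
\begin{equation*}
|\b+\b'|^2+|\b-\b'|^2=2\,|\b|^2+2\,|\b'|^2\quad\mu\text{-a.e.}
\end{equation*}
for every pair $\b,\b'\in{\rm Der}^{2,2}(\M;\mu)$. Once this is verified, Proposition \ref{prop:suff_cond_iH} immediately yields that $W^{1,2}(\M,\sfd_{\rm CC},\mu)$ is Hilbert, which is precisely infinitesimal Hilbertianity of the metric measure space.

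First I would dispose of the case of finite $\mu$. Theorem \ref{thm:embedding} provides a linear operator ${\rm I}\colon{\rm Der}^{2,2}(\M;\mu)\to L^2(\H\M;\mu)$ with $\|{\rm I}(\b)(x)\|_x=|\b|(x)$ for $\mu$-a.e.\ $x\in\M$. Because $(\M,\E,\sigma,\psi)$ is sub-Riemannian, each fiber $(\mathcal D_x,\|\cdot\|_x)$ is Hilbert, and Remark \ref{rmk:geom_pr} grants the pointwise parallelogram identity for arbitrary sections in $L^2(\H\M;\mu)$. Applying this identity to ${\rm I}(\b)$ and ${\rm I}(\b')$, and combining the linearity of ${\rm I}$ with the fiberwise isometry $\|{\rm I}(\cdot)(x)\|_x=|\cdot|(x)$, I directly read off the parallelogram rule for $\b,\b'$ themselves.

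The remaining point is to lift the identity from finite $\mu$ to an arbitrary Radon $\mu$. Completeness of $\sfd_{\rm CC}$ gives, by the comment following Theorem \ref{thm:Chow-Rashevskii}, that closed metric balls are compact; hence, fixing $x_0\in\M$ and $B_n\coloneqq B_n^{\sfd_{\rm CC}}(x_0)$, one has $\mu(\bar B_{n+1})<+\infty$, and the measures $\mu_n\coloneqq\mu|_{B_{n+1}}$ are finite. I would pick Lipschitz cutoffs $\eta_n\in\LIP_{bs}(\M)$ with $\eta_n\equiv 1$ on $B_n$ and $\mathrm{supp}(\eta_n)\subseteq B_{n+1}$, and associate to $\b\in{\rm Der}^{2,2}(\M;\mu)$ the localised derivation $\eta_n\b$ acting by $(\eta_n\b)(f)\coloneqq\eta_n\,\b(f)$. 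A direct Leibniz computation gives the divergence formula ${\rm div}_{\mu_n}(\eta_n\b)=\eta_n\,{\rm div}_\mu(\b)+\b(\eta_n)$ and the pointwise norm identity $|\eta_n\b|=\eta_n|\b|$, so $\eta_n\b\in{\rm Der}^{2,2}(\M;\mu_n)$. Applying the finite-measure case to the pair $(\eta_n\b,\eta_n\b')$ yields the parallelogram identity $\mu_n$-a.e., hence $\mu$-a.e.\ on $B_n$; on $B_n$ one has $\eta_n\equiv 1$, so the identity reduces to the desired one for $\b,\b'$. Taking the union $\bigcup_n B_n=\M$ concludes the argument via Proposition \ref{prop:suff_cond_iH}.

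The main content is already contained in Theorem \ref{thm:embedding}; what remains is mostly bookkeeping. The only slightly delicate point I anticipate is the divergence identity for $\eta_n\b$ together with the verification that $\eta_n\b$ lies in ${\rm Der}^{2,2}(\M;\mu_n)$. Both are routine but require carefully tracking how the Leibniz rule and weak locality interact with multiplication by a boundedly-supported Lipschitz cutoff, and in particular checking that the localisation absorbs the support of $\eta_n$ so that boundary terms vanish when one transfers integrals from $\mu$ to $\mu_n$.
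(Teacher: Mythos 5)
Your proposal is correct, and its core — deriving the pointwise parallelogram identity for derivations from Theorem \ref{thm:embedding} combined with Remark \ref{rmk:geom_pr} and the linearity and fiberwise isometry of \({\rm I}\), then invoking Proposition \ref{prop:suff_cond_iH} — is exactly the paper's argument for finite \(\mu\). Where you genuinely diverge is the reduction from Radon to finite measures. The paper localises at the level of Sobolev functions: it restricts \(\mu\) to closed balls, quotes external locality results to get \(|Df|_{\mu_n}=|Df|_\mu\) \(\mu_n\)-a.e., concludes that Hilbertianity of every \(W^{1,2}(\M,\sfd_{\rm CC},\mu_n)\) implies that of \(W^{1,2}(\M,\sfd_{\rm CC},\mu)\), and applies Proposition \ref{prop:suff_cond_iH} once per \(n\). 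You instead localise the derivations themselves via cutoffs \(\eta_n\), obtain the parallelogram identity \(\mu\)-a.e.\ on \(B_n\) for the original derivations, and apply Proposition \ref{prop:suff_cond_iH} a single time to \((\M,\sfd_{\rm CC},\mu)\). Your computations go through: the Leibniz rule gives \(\int\eta_n\,\b(f)\,\d\mu=-\int f\,(\eta_n\,{\rm div}(\b)+\b(\eta_n))\,\d\mu\), and since \(\b(\eta_n)\) vanishes \(\mu\)-a.e.\ off \({\rm supp}(\eta_n)\) by weak locality, all integrals transfer to \(\mu_n\). The one identity worth spelling out is \(|\eta_n\b|_{\mu_n}=\eta_n\,|\b|_\mu\) \(\mu_n\)-a.e.: the inequality \(\leq\) is immediate, while \(\geq\) holds because the \(\mu\)-essential supremum defining \(|\b|_\mu\) is realised along a countable subfamily of Lipschitz functions and therefore persists under restriction of the measure. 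What your route buys is self-containedness, avoiding the two external citations on locality of weak gradients; the price is the (routine but not entirely trivial) verification that cutoff multiples of derivations with \(L^2\)-divergence again lie in \({\rm Der}^{2,2}(\M;\mu_n)\), which you correctly flag as the delicate step.
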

\begin{proof}
Let \(\bar x\in{\rm spt}(\mu)\) be fixed.
We define \(B_n\coloneqq\bar B_n(\bar x)\)
and \(\mu_n\coloneqq\mu|_{B_n}\) for every \(n\in\N\).
We know from \cite[Proposition 2.6]{Gigli12} and \cite[Theorem 7.2.5]{DiMarino_thesis}
that for any \(n\in\N\) it holds that
\begin{equation}\label{eq:local_Sob}
f\in W^{1,2}(\M,\sfd_{\rm CC},\mu)\quad\Longrightarrow
\quad f\in W^{1,2}(\M,\sfd_{\rm CC},\mu_n)\;\text{ and }\;
|Df|_{\mu_n}=|Df|_\mu\;\;\mu_n\text{-a.e..}
\end{equation}
This ensures that, in order to prove that \((\M,\sfd_{\rm CC},\mu)\) is
infinitesimally Hilbertian, it is enough to show that \(W^{1,2}(\M,\sfd_{\rm CC},\mu_n)\)
is a Hilbert space for every \(n\in\N\). Given that \(\mu_n\) is a finite measure,
we can apply Theorem \ref{thm:embedding} and Remark \ref{rmk:geom_pr} to deduce that
\[|\b+\b'|^2+|\b-\b'|^2=2\,|\b|^2+2\,|\b'|^2\;\;\;\mu\text{-a.e.}
\quad\text{ for every }\b,\b'\in{\rm Der}^{2,2}(\M;\mu).\]
Hence \((\M,\sfd_{\rm CC},\mu_n)\) is infinitesimally Hilbertian
by Proposition \ref{prop:suff_cond_iH}. The statement is achieved.
\end{proof}
\begin{remark}{\rm
Given a sub-Finsler manifold \((\M,\E,\sigma,\psi)\) equipped with a non-negative
Radon measure \(\mu\), it is not necessarily true that
\(W^{1,2}(\M,\sfd_{\rm CC},\mu)\) is a Hilbert space.
Nevertheless, we can still deduce from Theorem \ref{thm:embedding} that
\(W^{1,2}(\M,\sfd_{\rm CC},\mu)\) is reflexive, as we are going to explain.

First of all, it can be readily checked that \(L^2(\H\M;\mu)\)
is a reflexive Banach space if endowed with the norm
\(L^2(\H\M;\mu)\ni v\mapsto\big(\int\big\|v(x)\big\|_x^2
\,\d\mu(x)\big)^{\nicefrac{1}{2}}\). Calling \(\B\) the dual of
\(\big({\rm Der}^{2,2}(\M;\mu),\|\cdot\|_2\big)\), where the norm
\(\|\cdot\|_2\) is defined as in Remark \ref{rmk:mathscr_L_f}, we
deduce from Theorem \ref{thm:embedding} that \(\B\) is a reflexive Banach space.
Consequently, the product space \(L^2(\mu)\times\B\) is reflexive as well.
Define \(\mathscr L_f\in\B\) for every \(f\in W^{1,2}(\M,\sfd_{\rm CC},\mu)\)
as in \eqref{eq:def_mathscr_L_f}. Observe that Remark \ref{rmk:mathscr_L_f}
grants that the linear operator
\[\begin{split}
W^{1,2}(\M,\sfd_{\rm CC},\mu)&\longrightarrow L^2(\mu)\times\B,\\
f&\longmapsto(f,\mathscr L_f)
\end{split}\]
is an isometry. Therefore, we can finally conclude
that \(W^{1,2}(\M,\sfd_{\rm CC},\mu)\) is reflexive.
\fr}\end{remark}
\def\cprime{$'$} \def\cprime{$'$}

\end{document}